\renewcommand{\subsection}{\@startsection
{subsection}{2}{0mm}{\baselineskip}{-0.25cm}
{\normalfont\normalsize\em}}
\def\negs{\mathbf s}
\def\negh{\mathbf h}
\def\negn{\mathbf n}
\def\negx{\mathbf x}
\def\negb{\mathbf b}
\def\nega{\mathbf a}
\def\negf{\mathbf f}
\def\negy{\mathbf y}
\def\negz{\mathbf z}
\def\nege{\mathbf e}
\def\negt{\mathbf t}
\def\neg1{\text{\boldmath$1$}}
\def\neg1{\text{\boldmath$1$}}
\def\neg0{\bf {0}}
\def\NN{\mathbb{N}}
\newtheorem{theorem}{Theorem}[section]
\newtheorem{proposition}[theorem]{Proposition}
\newtheorem{corollary}[theorem]{Corollary}
\newtheorem{lemma}[theorem]{Lemma}
\theoremstyle{definition}
\newtheorem{definition}[theorem]{Definition}
\newtheorem{example}[theorem]{Example}
\theoremstyle{remark}
\newtheorem{remark}[theorem]{Remark}
\title[On almost-symmetry in generalized numerical semigroups]{On almost-symmetry in \\ generalized numerical semigroups}
\author[C. Cisto]{Carmelo Cisto}
 \address{Università di Messina, Dipartimento di Scienze Matematiche e Informatiche, Scienze Fisiche e Scienze della Terra, Viale Ferdinando Stagno D’Alcontres 31, 98166 Messina, Italy}
  \email{ccisto@unime.it}
\author[W. Tenório]{Wanderson Tenório}
 \address{Universidade Federal de Goiás, Instituto de Matemática e Estatística, R. Jacarandá, Chácaras Califórnia, 74001-970 Goiânia, GO, Brazil}
  \email{wanderson\_tenorio@ufg.br}
\begin{document}
	
	\dedicatory{Dedicated to the memory of Fernando Torres.}

\keywords{generalized numerical semigroups (GNSs), almost-symmetry, Frobenius GNSs}
\subjclass[2010]{20M14, 06F05, 11D07}

\maketitle
\begin{abstract} In this work we introduce the notion of almost-symmetry for generalized numerical semigroups. In addition to the main properties occurring in this new class, we present several characterizations for its elements. In particular we show that this class yields a new family of Frobenius generalized numerical semigroups and extends the class of irreducible generalized numerical semigroups. This investigation allows us to provide a method of computing all almost symmetric generalized numerical semigroup having a fixed Frobenius element and organizing them in a rooted tree depending on a chosen monomial order.
\end{abstract}

\section{Introduction}

Let $\mathbb{Z}$ and $\NN_0$ denote the sets of integers and nonnegative integers respectively. Given a positive integer $d$, we say that a subset $S\subseteq \NN_0^d$ is \emph{submonoid} of $\NN_0^d$ if it is closed under the usual addition in $\NN_0^d$ and contains $\mathbf{0}$, the element of $\NN_0^d$ with all zero components. A \emph{numerical semigroup} is a submonoid of $\NN_0$ (and so $d=1$) such that its complement in $\NN_0$ is finite. These objects have much importance in  many areas and are extensively studied in several directions. For a detailed exposition on this subject, we refer the reader to \cite{book}. 
A straightforward extension of numerical semigroups to submonoids of $\NN_0^d$, with $d\geq 1$, is presented in \cite{defGNS} where a \emph{generalized numerical semigroup} (or simply GNS for short) is defined as a submonoid $S$ of $\NN_0^d$ whose the complement $\operatorname{H}(S)=\NN_0^d\setminus S$ is finite. Following the usual terminology for numerical semigroups, the elements of $\operatorname{H}(S)$ are called the \emph{gaps} (or \emph{holes}) of $S$ and the number $\operatorname{g}(S)=|\operatorname{H}(S)|$ is called the \emph{genus} of $S$. A first topic regarding this subject concerns with the generators of such monoid, studied in \cite{generator}. 
In particular, an element $\mathbf{x}\in S\setminus \{\mathbf{0}\}$ is a \emph{minimal generator} of $S$ if $\mathbf{x}=\mathbf{y}+\mathbf{z}$, for $\mathbf{y},\mathbf{z}\in S$, implies that either $\mathbf{y}=\mathbf{0}$ or $\mathbf{z}=\mathbf{0}$; it is shown that for every generalized numerical semigroup the set of minimal generators is unique and finite, whose cardinality $\operatorname{e}(S)$ is called the \emph{embedding dimension} of $S$. Furthermore, about the study of such class of submonoids of $\NN_0^d$, in \cite{irreducible} it is considered the class of \emph{irreducible} generalized numerical semigroups, which are generalized numerical semigroups that cannot be written as intersection of other nontrivial generalized numerical semigroups. Generalizations of known concepts and results on irreducible numerical semigroups are provided there. Moreover the notion of Frobenius element with respect to a monomial order is defined together with a class of generalized numerical semigroups in which the Frobenius element is uniquely defined (the so-called \emph{Frobenius GNSs}) and it is shown that irreducible generalized numerical semigroups are maximal among generalized numerical semigroups having a fixed Frobenius element. There exist also some procedures to deal with generalized numerical semigroups, described in \cite{algorithms} and implemented in the GAP \cite{GAP} package \texttt{numericalsgps} \cite{numericalsgps}, useful for instance to produce examples and to check properties. We mention that the aim of generalizing properties and definitions of numerical semigroups to submonoids of $\NN_0^d$ is an actual line of research, considering also different monoids than GNSs, for instance in \cite{onPseudoFrob} the concept of irreducibility is introduced in a more general context.\\
A well-known property on numerical semigroups is that of almost-symmetry, introduced in \cite{defASnumsem} and which extends the notion of irreducibility for numerical semigroups. There are different ways to introduce such a concept; in fact, alternative equivalences to the notion of almost-symmetry in numerical semigroups are explored in \cite{symmetries}, where several properties concerning symmetries are also considered even for their pseudo-Frobenius elements as in Ap\'ery sets. In particular it is proven that almost symmetric numerical semigroups are maximal with respect to the inclusion among the numerical semigroups having fixed the Frobenius number and type. Another problem considered in this subject, addressed in \cite{giventype}, is that of computing the whole set of almost symmetric numerical semigroups with fixed Frobenius number, as well as fixed type. \\
In this work we consider an extension of the notion of almost-symmetry to generalized numerical semigroups. Motivated by the properties in numerical semigroups, we present a broad study of this properties in higher dimensions, extending several descriptions and results. This family extends the class of irreducible generalized numerical semigroups and it is indeed a new family of Frobenius generalized numerical semigroups.\\ This paper is organized as follows. Section 2 collects basic terminologies and results concerning generalized numerical semigroups useful for the rest of the paper, regarding in particular the concept of irreducibility, and it is devoted to introduce an extension for the notion of almost-symmetry to the setting of generalized numerical semigroups and its immediate outcomes. Properties regarding symmetries in almost symmetric generalized numerical semigroups are considered in Section 3; many equivalences to the new concept are proven, in particular characterizations of almost-symmetry regarding the set of pseudo-Frobenius elements and the Ap\'ery set are provided also in this context. In Section 4, it is addressed the issue of computing all almost symmetric generalized numerical semigroups having fixed Frobenius element, in particular we show how it is possible to organize them in a rooted tree. We conclude by providing some possible developments to be considered.

\section{Preliminaries}

Recall that a generalized numerical semigroup (GNS) is a submonoid of $\NN_0^d$ with finite complement in it, in particular numerical semigroups are GNSs with $d=1$. Recall also that if $S\subseteq \NN_0^d$ is a GNS then $\operatorname{H}(S)=\NN_0^d \setminus S$ is called the set of \emph{gaps} (or \emph{holes}) of $S$ and the number $\operatorname{g}(S)=|\operatorname{H}(S)|$ is called the \emph{genus} of $S$. In \cite{irreducible} is presented the generalization of some notions defined for numerical semigroups to the framework of GNSs. This section is devoted to recall such concepts and some of their properties useful for this work, as well as to introduce the class of almost symmetric GNSs. We start by setting up some basic terminologies and their notations.

Given $S\subseteq \NN_0^d$ a GNS, we consider in $\mathbb{Z}^d$ the following partial order:
$$\negy\leq_S\negx \ \ \mbox{if and only if} \ \ \negx-\negy\in S.$$
Setting $S^*$ for the set $S\backslash \{\textbf{0}\}$, we define the set of \emph{pseudo-Frobenius elements} of $S$ as
$$\operatorname{PF}(S)=\{\negh\in \operatorname{H}(S) \ | \ \negh+\negs\in S \ \ \mbox{for all} \ \negs\in S^*\}.$$
Its cardinality $|\operatorname{PF}(S)|$ is called the \emph{type} of $S$, denoted by $\operatorname{t}(S)$. The next result characterizes the pseudo-Frobenius elements of $S$ in terms of $\leq _S$.

\begin{proposition}[\cite{irreducible}, Proposition 1.3] Let $S\subseteq \NN_0^d$ be a GNS. The set $\operatorname{PF}(S)$ is the set of maximal elements of $\operatorname{H}(S)$ with respect to the partial order $\leq_S$.
\label{pseudofrobeniusmaximal}
\end{proposition}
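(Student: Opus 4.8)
The plan is to prove the two set inclusions by directly unwinding both definitions, since the statement is essentially an equivalence between "being maximal in $\operatorname{H}(S)$" and "satisfying the pseudo-Frobenius condition." The single structural fact I will lean on throughout is the disjoint partition $\NN_0^d = S \sqcup \operatorname{H}(S)$, which is what lets me translate the phrase "$\negh+\negs \notin \operatorname{H}(S)$" into "$\negh+\negs \in S$."

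First I would fix an arbitrary $\negh \in \operatorname{H}(S)$ and rewrite what it means for $\negh$ to be non-maximal with respect to $\leq_S$. By definition, $\negh$ fails to be maximal precisely when there exists $\negx \in \operatorname{H}(S)$ with $\negh \leq_S \negx$ and $\negx \neq \negh$; that is, $\negx - \negh \in S$ and $\negx - \negh \neq \mathbf{0}$, so $\negx - \negh \in S^*$. The key observation is the bijection $\negx = \negh + \negs$ between such elements $\negx$ lying strictly above $\negh$ and elements $\negs \in S^*$: every $\negs \in S^*$ produces a candidate $\negh + \negs \in \NN_0^d$, and conversely every strict upper bound of $\negh$ in the partial order arises this way. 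Under this correspondence, the existence of a gap $\negx$ strictly above $\negh$ is exactly the existence of some $\negs \in S^*$ with $\negh + \negs \in \operatorname{H}(S)$.

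Next I would negate this to describe maximality positively: $\negh$ is maximal in $\operatorname{H}(S)$ if and only if $\negh + \negs \notin \operatorname{H}(S)$ for every $\negs \in S^*$. Here I invoke the partition $\NN_0^d = S \sqcup \operatorname{H}(S)$ together with the fact that $\negh + \negs \in \NN_0^d$, so that $\negh + \negs \notin \operatorname{H}(S)$ is equivalent to $\negh + \negs \in S$. This last condition, quantified over all $\negs \in S^*$, is verbatim the defining property of $\operatorname{PF}(S)$. Hence $\negh$ is maximal in $\operatorname{H}(S)$ if and only if $\negh \in \operatorname{PF}(S)$, which yields the claimed equality of sets.

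Since both $\operatorname{PF}(S)$ and the set of maximal gaps are subsets of $\operatorname{H}(S)$ by construction, no separate argument is needed to place $\negh$ in the gap set. I do not expect any genuine obstacle here: the proof is a chain of definitional equivalences, and the only point demanding a moment of care is the use of the disjoint decomposition $\NN_0^d = S \sqcup \operatorname{H}(S)$ to pass between the statements "$\negh + \negs \in S$" and "$\negh + \negs \notin \operatorname{H}(S)$."
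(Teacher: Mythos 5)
Your proof is correct and complete: the chain of equivalences (non-maximality of $\negh$ in $\operatorname{H}(S)$ $\Leftrightarrow$ existence of $\negs\in S^*$ with $\negh+\negs\in\operatorname{H}(S)$, then negating and using the partition $\NN_0^d=S\sqcup\operatorname{H}(S)$) is exactly the definitional unwinding this statement calls for. Note that the paper itself gives no proof here — it imports the result from the cited reference \cite{irreducible} — so there is nothing to contrast with; your argument is the standard one, and the only care point (that $\negh+\negs\in\NN_0^d$, so ``not a gap'' really does mean ``in $S$'') is handled explicitly.
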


The set of \emph{special gaps} of $S$ is defined to be
$$\operatorname{SG}(S)=\{\negh\in \operatorname{PF}(S) \ | \ 2\negh\in S\}.$$
Its elements satisfy the following property.

\begin{proposition}[\cite{irreducible}, Proposition 2.3] Let $S\subseteq \mathbb{N}^d$ be a GNS and let $\mathbf{x}\in \operatorname{H}(S)$. Then $S\cup \{\mathbf{x}\}$ is a GNS if and only if $\mathbf{x}\in \operatorname{SG}(S).$
\end{proposition}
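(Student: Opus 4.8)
The plan is to reduce the statement to a routine verification of closure under addition, after first disposing of the finiteness condition, which comes for free. Since $S$ is a GNS, $\operatorname{H}(S)=\NN_0^d\setminus S$ is finite; because $\NN_0^d\setminus(S\cup\{\mathbf{x}\})=\operatorname{H}(S)\setminus\{\mathbf{x}\}$ is a subset of $\operatorname{H}(S)$, it is automatically finite. Moreover $\mathbf{0}\in S\subseteq S\cup\{\mathbf{x}\}$. Hence $S\cup\{\mathbf{x}\}$ is a GNS if and only if it is a submonoid, i.e. if and only if it is closed under the addition of $\NN_0^d$. The entire content of the proposition therefore lies in translating this closure condition into the defining properties of $\operatorname{SG}(S)$.

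First I would analyze closure of $S\cup\{\mathbf{x}\}$ by cases on the location of the two summands. If both lie in $S$, closure holds since $S$ is a submonoid, so no new constraint arises. The only genuinely new requirements involve $\mathbf{x}$: for each $\mathbf{s}\in S^*$ one needs $\mathbf{x}+\mathbf{s}\in S\cup\{\mathbf{x}\}$, and one needs $2\mathbf{x}=\mathbf{x}+\mathbf{x}\in S\cup\{\mathbf{x}\}$. Here the key elementary remark is that $\mathbf{x}\neq\mathbf{0}$ (being a gap, while $\mathbf{0}\in S$), so that adding any nonzero element of $\NN_0^d$ to $\mathbf{x}$ yields a vector strictly larger than $\mathbf{x}$ in every relevant coordinate; in particular $\mathbf{x}+\mathbf{s}\neq\mathbf{x}$ for $\mathbf{s}\in S^*$ and $2\mathbf{x}\neq\mathbf{x}$. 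Consequently the membership ``$\mathbf{x}+\mathbf{s}\in S\cup\{\mathbf{x}\}$'' is equivalent to ``$\mathbf{x}+\mathbf{s}\in S$'', and likewise ``$2\mathbf{x}\in S\cup\{\mathbf{x}\}$'' is equivalent to ``$2\mathbf{x}\in S$''. This observation is exactly what decouples membership in $S\cup\{\mathbf{x}\}$ into membership in $S$.

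Putting this together, closure of $S\cup\{\mathbf{x}\}$ is equivalent to the conjunction of the conditions $\mathbf{x}+\mathbf{s}\in S$ for every $\mathbf{s}\in S^*$ and $2\mathbf{x}\in S$. The first condition is precisely the defining property of $\mathbf{x}\in\operatorname{PF}(S)$, while the second is exactly the extra requirement distinguishing $\operatorname{SG}(S)$ inside $\operatorname{PF}(S)$. Thus $S\cup\{\mathbf{x}\}$ is a GNS if and only if $\mathbf{x}\in\operatorname{SG}(S)$, which establishes both implications at once. I do not anticipate a substantive obstacle here: once finiteness is dispatched, the argument is a direct case analysis. The only point requiring a moment's care is the $(\Rightarrow)$ direction, where one must remember to apply closure with \emph{both} summands equal to $\mathbf{x}$ in order to recover $2\mathbf{x}\in S$; omitting this case would yield only $\mathbf{x}\in\operatorname{PF}(S)$ rather than the sharper conclusion $\mathbf{x}\in\operatorname{SG}(S)$.
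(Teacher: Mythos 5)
Your proof is correct: the finiteness and identity conditions are indeed automatic, and your case analysis correctly reduces closure of $S\cup\{\mathbf{x}\}$ under addition to the two conditions $\mathbf{x}+\mathbf{s}\in S$ for all $\mathbf{s}\in S^*$ and $2\mathbf{x}\in S$ (using $\mathbf{x}\neq\mathbf{0}$ and $\mathbf{s}\neq\mathbf{0}$ to rule out the sums landing on $\mathbf{x}$ itself), which together are precisely the definition of $\mathbf{x}\in\operatorname{SG}(S)$. The paper does not prove this statement itself but imports it from \cite{irreducible}, and your argument is essentially the standard verification given there, so no further comparison is needed.
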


For an element $\negx\in \NN_0^d$ and $i\in \{1,\ldots,d\}$, the $i$-th component of $\negx$ will be denoted by $x^{(i)}$.  We will use $\negx+\mathbf{1}$ to refer to the element of $\NN_0^d$ whose $i$-th component is $x^{(i)}+1$ for all $i\in \{1,\ldots,d\}$ and the symbol $|\negx|_\times$ to stand for $x^{(1)}\cdots x^{(d)}$, the product of the components of $\negx$. By the natural partial order $\leq$ in $\NN_0^d$ we mean
$$\negy\leq\negx \ \ \mbox{if and only if} \ \ y^{(i)}\leq x^{(i)} \ \mbox{for all} \ i\in \{1,\ldots,d\}.$$
Observe that the latter partial order $\leq$ in $\NN_0^d$ can be naturally extended to a partial order in $\mathbb{Z}^d$, which in special, for $\ \negx,\negy\in \mathbb{Z}^d$, satisfies
$$\negy\leq\negx \ \ \ \mbox{if and only if} \ \ \ \negx-\negy\in \NN_0^d,$$
where $\mathbf{x}-\negy$ refers to the usual difference in $\mathbb{Z}^{d}$.

A GNS $S\subseteq \NN_0^d$ is said to be a \emph{Frobenius GNS} if there exists a unique maximal element $\negf$ in $\operatorname{H}(S)$ with respect to the natural partial order $\leq$ of $\NN_0^d$. We denote by $(S,\negf)$ the Frobenius GNS $S$ with Frobenius element $\negf$.

\subsection{Irreducible GNSs}
In \cite{irreducible}, generalizing the notion of irreducibility given for numerical semigroups, a GNS is defined to be \emph{irreducible} if it cannot be expressed as intersection of two GNSs properly containing it. Some properties of them are introduced there, in particular the main characterizations can be summarized in the following:

\begin{proposition}[\cite{irreducible}]
Let $S\subseteq \NN_0^{d}$ be a GNS. Then $S$ is an irreducible GNS if and only if $|\operatorname{SG}(S)|=1$. Moreover $\operatorname{SG}(S)=\{\negf\}$ if and only if one of the following conditions occurs:
\begin{itemize}
\item $\operatorname{PF}(S)=\{\negf\}$;
\item $\operatorname{PF}(S)=\{\negf,\frac{\negf}{2}\}$.
\end{itemize}
In both cases $S$ is a Frobenius GNS with Frobenius element $\negf$.
\end{proposition}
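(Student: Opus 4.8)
The proof rests on the criterion recalled above, namely that for $\negx\in\operatorname{H}(S)$ the set $S\cup\{\negx\}$ is again a GNS if and only if $\negx\in\operatorname{SG}(S)$, together with the fact that $\operatorname{SG}(S)\neq\emptyset$ whenever $\operatorname{H}(S)\neq\emptyset$. I would first record this nonemptiness: any element $\negh$ of $\operatorname{H}(S)$ that is maximal for the natural order $\leq$ is automatically a special gap, since $\negh\neq\mathbf{0}$ forces $\negh+\negs>\negh$ and $2\negh>\negh$ for every $\negs\in S^*$, so both lie in $S$ by maximality; thus $\negh\in\operatorname{PF}(S)$ and $2\negh\in S$. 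For the first equivalence, if $|\operatorname{SG}(S)|\geq 2$ I pick distinct $\negh_1,\negh_2\in\operatorname{SG}(S)$; then $S\cup\{\negh_1\}$ and $S\cup\{\negh_2\}$ are GNSs properly containing $S$ whose intersection is $S$, so $S$ is reducible. Conversely, if $S=S_1\cap S_2$ with $S_i\supsetneq S$, I choose $\negh_i$ maximal for $\leq$ in the finite nonempty set $S_i\setminus S\subseteq\operatorname{H}(S)$; the same maximality argument carried out inside $S_i$ shows $\negh_i\in\operatorname{SG}(S)$. Hence if $\operatorname{SG}(S)=\{\negf\}$ we get $\negh_1=\negh_2=\negf\in S_1\cap S_2=S$, contradicting $\negf\in\operatorname{H}(S)$. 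This proves that $S$ is irreducible if and only if $|\operatorname{SG}(S)|=1$.

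Assume now $\operatorname{SG}(S)=\{\negf\}$. The remark on maximal gaps shows that every $\leq$-maximal element of $\operatorname{H}(S)$ is a special gap, hence equals $\negf$; since $\operatorname{H}(S)$ is finite this yields a unique $\leq$-maximal gap, so $S$ is a Frobenius GNS with Frobenius element $\negf$ and every gap satisfies $\negh\leq\negf$. The converse half of the second statement is quick: if $\operatorname{PF}(S)=\{\negf\}$ or $\{\negf,\frac{\negf}{2}\}$ then $\operatorname{SG}(S)\subseteq\operatorname{PF}(S)$, and one checks directly that $2\negf>\negf$ lies in $S$ while $2\cdot\frac{\negf}{2}=\negf\in\operatorname{H}(S)$, so $\negf\in\operatorname{SG}(S)$ and $\frac{\negf}{2}\notin\operatorname{SG}(S)$, giving $\operatorname{SG}(S)=\{\negf\}$.

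The substantive direction is to show that $\operatorname{SG}(S)=\{\negf\}$ forces $\operatorname{PF}(S)\subseteq\{\negf,\frac{\negf}{2}\}$. The plan is to take $\negh\in\operatorname{PF}(S)$ with $\negh\neq\negf$ and prove $2\negh=\negf$. Two elementary lemmas drive this. First, a doubling lemma: if $\negg\in\operatorname{PF}(S)$ then $2\negg+\negs\in S$ for all $\negs\in S^*$, because $\negg+\negs\in S^*$ and then $2\negg+\negs=\negg+(\negg+\negs)\in S$; in particular $2\negg\in\operatorname{PF}(S)$ whenever $2\negg\in\operatorname{H}(S)$. Since $\negh\neq\negf$ is not a special gap, $2\negh\in\operatorname{H}(S)$; iterating, each $2^i\negh$ lies in $\operatorname{PF}(S)\cap\operatorname{H}(S)$ as long as it differs from $\negf$, and these form a strictly $\leq$-increasing chain bounded above by $\negf$ in the finite interval $\{\negx:\mathbf{0}\leq\negx\leq\negf\}$. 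Such a chain can only terminate at $\negf$, so $2^k\negh=\negf$ for some $k\geq 1$.

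It remains to exclude $k\geq 2$, which I expect to be the main obstacle. For this I would use a telescoping (``peeling'') lemma. Writing $\negf-\negh=\frac{\negf}{2}+\frac{\negf}{4}+\cdots+\frac{\negf}{2^k}$, every summand $\frac{\negf}{2^j}=2^{k-j}\negh$ is a pseudo-Frobenius element by the chain, so starting from $\negs\in S^*$ one adds the summands one at a time, from the smallest, each step landing in $S^*$ by the doubling lemma, to conclude $(\negf-\negh)+\negs\in S$; hence $\negf-\negh\in\operatorname{PF}(S)$. (That $\negf-\negh\in\operatorname{H}(S)$ follows because $\negf-\negh\in S^*$ would give $\negf=\negh+(\negf-\negh)\in S$.) Now if $k\geq 2$ then $\negf-2\negh\neq\mathbf{0}$, so $2(\negf-\negh)=\negf+(\negf-2\negh)>\negf$ lies in $S$; thus $\negf-\negh$ is a pseudo-Frobenius element with $2(\negf-\negh)\in S$, i.e. a special gap, forcing $\negf-\negh=\negf$ and $\negh=\mathbf{0}$, a contradiction. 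Therefore $k=1$ and $\negh=\frac{\negf}{2}$, giving $\operatorname{PF}(S)\subseteq\{\negf,\frac{\negf}{2}\}$. The delicate points are the correct set-up of the telescoping identity and the verification that the doubling chain is genuinely strictly increasing and bounded by $\negf$.
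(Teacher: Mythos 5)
Your proof is correct. Bear in mind that the paper itself offers no proof of this statement: it is quoted from \cite{irreducible}, so the comparison is with the argument given there. For the equivalence ``irreducible $\Leftrightarrow$ $|\operatorname{SG}(S)|=1$'' your argument is essentially the standard one (two distinct special gaps give two unitary extensions intersecting in $S$; conversely a $\leq$-maximal element of $S_i\setminus S$ is a special gap of $S$), and your preliminary observation that $\leq$-maximal gaps are special gaps is also how one sees that $\operatorname{SG}(S)=\{\mathbf{f}\}$ forces $\mathbf{f}$ to be the unique maximal gap, hence the Frobenius element. The genuine divergence is in the ``moreover'' part: the reference deduces $\operatorname{PF}(S)\subseteq\{\mathbf{f},\mathbf{f}/2\}$ from the difference property (Proposition \ref{differenceFrobenius} of the present paper), namely that irreducibility gives $\mathbf{f}-\mathbf{h}\in S$ for every gap $\mathbf{h}$ with $2\mathbf{h}\neq\mathbf{f}$; that property is obtained by a maximal-counterexample argument in the spirit of the maximality characterization (Proposition \ref{maxirreducible}), after which the claim on $\operatorname{PF}(S)$ follows in one line. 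You bypass that machinery entirely: the doubling lemma keeps $2^i\mathbf{h}$ inside $\operatorname{PF}(S)$ while the chain avoids $\mathbf{f}$, finiteness of $\operatorname{H}(S)$ forces $2^k\mathbf{h}=\mathbf{f}$, and the telescoping identity $\mathbf{f}-\mathbf{h}=\mathbf{h}+2\mathbf{h}+\cdots+2^{k-1}\mathbf{h}$ exhibits $\mathbf{f}-\mathbf{h}$ as a second special gap whenever $k\geq 2$. This yields a correct, self-contained proof using only the definition of $\operatorname{SG}(S)$; the price is that it proves slightly less than Proposition \ref{differenceFrobenius} (the difference property only for pseudo-Frobenius gaps rather than for all gaps), a stronger statement that the paper reuses elsewhere. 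Two glosses worth making explicit: in the easy direction, $2\mathbf{f}\in S$ because every gap is $\leq_S$ some element of $\operatorname{PF}(S)$ (Proposition \ref{pseudofrobeniusmaximal}) and hence $\leq\mathbf{f}$; and the statement tacitly assumes $S\neq\NN_0^d$, since otherwise $\operatorname{SG}(S)=\emptyset$ while $S$ is vacuously irreducible---your proof, like the cited one, presupposes $\operatorname{H}(S)\neq\emptyset$.
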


Using the same terminology for irreducible numerical semigroups, if $\operatorname{PF}(S)=\{\negf\}$ then $S$ is called \emph{symmetric}, and if $\operatorname{PF}(S)=\{\negf,\frac{\negf}{2}\}$ then $S$ is called \emph{pseudo-symmetric}.

\begin{remark}
Observe that a GNS is symmetric if and only if $\operatorname{t}(S)=1$. Moreover every pseudo-symmetric GNS has type $2$ but if $S$ is GNS with $\operatorname{t}(S)=2$ then $S$ could not be pseudo-symmetric. For instance the set $\NN_0^2\setminus \{(0,1),(1,0)\}$ is a GNS with type $2$ but it is not pseudo-symmetric.
\end{remark}

Such classes of GNSs can be easily recognized by their set of gaps and the formulas expressed in the following result:

\begin{theorem}[\cite{irreducible}] Let $S\subseteq \NN_0^d$ be a GNS with genus $g$. Then:
\begin{enumerate}
\item[\rm i)] $\operatorname{PF}(S)=\{\negf\}$ if and only $2g=|\negf+\mathbf{1}|_\times$;
\item[\rm ii)] $\operatorname{PF}(S)=\{\negf,\frac{\negf}{2}\}$ if and only $2g-1=|\negf+\mathbf{1}|_\times$.
\end{enumerate}
\label{irreducible}
\end{theorem}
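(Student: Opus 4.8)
The plan is to reduce both biconditionals to a single counting argument built on the involution $\phi(\negx)=\negf-\negx$ of the box $B=\{\negx\in\NN_0^d : \mathbf 0\le \negx\le \negf\}$, whose cardinality is exactly $|\negf+\mathbf{1}|_\times$. Throughout I take $\negf$ to be the Frobenius element of $S$; on the left-hand sides this costs nothing, since the preceding characterization of irreducible GNSs tells us that the hypothesis $\operatorname{PF}(S)=\{\negf\}$ or $\operatorname{PF}(S)=\{\negf,\frac{\negf}{2}\}$ already forces $S$ to be a Frobenius GNS with Frobenius element $\negf$, and on the right-hand sides I read $\negf$ as that Frobenius element.

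First I would record three facts valid for every Frobenius GNS $(S,\negf)$. (a) Because $\operatorname{H}(S)$ is finite, its unique $\le$-maximal element $\negf$ is automatically its greatest element, so $\operatorname{H}(S)\subseteq B$ and hence $|\negf+\mathbf{1}|_\times=|B|=|S\cap B|+g$. (b) The map $\phi$ is an involution of $B$, and if $\negx\in S\cap B$ then $\negf-\negx\in\operatorname{H}(S)$: otherwise $\negf=\negx+(\negf-\negx)$ would be a sum of two elements of $S$, contradicting $\negf\in\operatorname{H}(S)$. Thus $\phi$ embeds $S\cap B$ into $\operatorname{H}(S)$, and a gap $\negh$ lies in this image precisely when $\negf-\negh\in S$, i.e.\ when $\negh\le_S\negf$. (c) Combining (a) and (b) yields $|\negf+\mathbf{1}|_\times=g+\#\{\negh\in\operatorname{H}(S):\negh\le_S\negf\}\le 2g$, so everything reduces to counting the gaps that fail to satisfy $\negh\le_S\negf$.

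With this in hand both statements become assertions about that count. For (i), the equality $|\negf+\mathbf{1}|_\times=2g$ holds iff every gap satisfies $\negh\le_S\negf$, i.e.\ iff $\negf$ is the greatest element of $\operatorname{H}(S)$ for $\le_S$; by Proposition \ref{pseudofrobeniusmaximal} this is exactly $\operatorname{PF}(S)=\{\negf\}$, giving both directions at once. For (ii), $|\negf+\mathbf{1}|_\times=2g-1$ holds iff exactly one gap $\negh_0$ fails $\negh_0\le_S\negf$, and it remains to match this with $\operatorname{PF}(S)=\{\negf,\frac{\negf}{2}\}$. In the forward direction of this last equivalence, note $\phi$ permutes the exceptional gaps among themselves (if $\negh_0\not\le_S\negf$ then $\negf-\negh_0$ is again a gap with $\negf-\negh_0\not\le_S\negf$), so a single exceptional gap must be the fixed point, forcing $\negh_0=\frac{\negf}{2}\in\NN_0^d$; then $\negf\in\operatorname{PF}(S)$ is automatic, $\frac{\negf}{2}\in\operatorname{PF}(S)$ because $\frac{\negf}{2}+\negs\in\operatorname{H}(S)$ for some $\negs\in S^*$ would (being $\neq\frac{\negf}{2}$, hence $\le_S\negf$) give $\frac{\negf}{2}-\negs\in S$ and so $\frac{\negf}{2}\in S$, a contradiction, and any further pseudo-Frobenius element, not being $\le_S\negf$, must be the unique exceptional gap $\frac{\negf}{2}$. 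In the reverse direction one assumes $\operatorname{PF}(S)=\{\negf,\frac{\negf}{2}\}$ and shows the set of gaps not $\le_S\negf$ is exactly $\{\frac{\negf}{2}\}$: such a gap $\negh$ and its partner $\negf-\negh$ are both $\le_S\frac{\negf}{2}$, whence $\frac{\negf}{2}-\negh\in S$ and $\negh-\frac{\negf}{2}\in S$, forcing $\negh=\frac{\negf}{2}$.

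I expect the main obstacle to be part (ii): one must argue carefully that the single gap not dominated by $\negf$ under $\le_S$ is precisely the $\phi$-fixed point $\frac{\negf}{2}$, and then that this point genuinely lies in $\operatorname{PF}(S)$ while producing no spurious pseudo-Frobenius elements. The other delicate point is fact (a)—that the natural-order maximal gap is actually the maximum, so that $\operatorname{H}(S)$ sits inside $B$—which needs the finiteness of $\operatorname{H}(S)$ and should be proven rather than assumed.
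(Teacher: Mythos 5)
Your proof is correct, but there is nothing in this paper to compare it against: Theorem~\ref{irreducible} is quoted from \cite{irreducible} as a known preliminary and is never proved here. The honest comparison is with the machinery the paper develops for the general almost-symmetric case, and there your argument is essentially that machinery specialized to types $1$ and $2$. Your involution $\phi(\negx)=\negf-\negx$ of the box $B=\pi(\negf)$, restricted to $S\cap B=\operatorname{N}(\negf)$, is exactly the paper's map $\psi_{\negf}$ (Definition~\ref{map}); your identity $|\negf+\mathbf{1}|_\times=2g-E$, where $E$ counts the gaps $\negh$ with $\negf-\negh\notin S$, is the equality-case analysis underlying Lemma~\ref{lemmacardinalities}, Proposition~\ref{ASformula} and Remark~\ref{remarkbijection}; and your part (ii) parallels the paper's corollary identifying pseudo-symmetric GNSs with almost symmetric GNSs of type $2$. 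What is genuinely yours is the fixed-point argument in (ii): $\phi$ permutes the exceptional gaps among themselves, so a unique exceptional gap must satisfy $\negh_0=\negf-\negh_0$, hence $\negh_0=\negf/2$; this cleanly replaces the ordering argument (Lemma~\ref{ordered} via Proposition~\ref{bigprop}) that the paper would invoke at the analogous step, and it is sound. You also identified and correctly resolved the two real delicacies: finiteness of $\operatorname{H}(S)$ is what upgrades the unique $\leq$-maximal gap to a maximum (so that $\operatorname{H}(S)\subseteq B$), and the hypotheses $\operatorname{PF}(S)=\{\negf\}$ or $\operatorname{PF}(S)=\{\negf,\negf/2\}$ do force $S$ to be Frobenius with Frobenius element $\negf$ (your appeal to the quoted characterization works, and a direct argument avoids any circularity worry: every gap is $\leq_S$ a pseudo-Frobenius element, and $\negh\leq_S\negf$ or $\negh\leq_S\negf/2$ implies $\negh\leq\negf$). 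One caveat worth a sentence in your write-up: you read $\negf$ on the right-hand sides as the Frobenius element; under the laxer reading that \emph{some} gap $\negf$ satisfies the stated equality, one must first observe, as in the proof of Proposition~\ref{remarkFrobenius}, that equality forces $\operatorname{LH}(\negf)=\operatorname{H}(S)$, i.e.\ that gap is the maximum of $\operatorname{H}(S)$ --- after which your argument applies verbatim.
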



The next proposition is another useful characterization of irreducible GNSs.

\begin{proposition}[\cite{irreducible}, Proposition 2.6] Let $S\subseteq \NN_0^d$ be a GNS. Then $S$ is irreducible if and only if there exists $\negf \in \operatorname{H}(S)$ such that for every $\negh \in \operatorname{H}(S)$ with $2\negh\neq\negf$ we have that $\negf-\negh \in S$.
\label{differenceFrobenius}
\end{proposition}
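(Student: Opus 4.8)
The plan is to prove the equivalence stated in Proposition~\ref{differenceFrobenius} by relating the difference condition to the characterization of irreducibility via special gaps (namely that $S$ is irreducible if and only if $|\operatorname{SG}(S)|=1$) together with the dichotomy $\operatorname{PF}(S)=\{\negf\}$ or $\operatorname{PF}(S)=\{\negf,\frac{\negf}{2}\}$. First I would unpack the difference condition: assume there exists $\negf\in\operatorname{H}(S)$ such that $\negf-\negh\in S$ for every gap $\negh$ with $2\negh\neq\negf$. A natural first observation is that $\negf$ itself must be the largest gap in the order $\leq_S$; indeed, if some gap $\negh$ satisfied $\negf <_S \negh$, i.e.\ $\negh-\negf\in S^*$, one would try to derive a contradiction with $\negf-\negh\in S$, since adding these would force $\mathbf{0}$ to decompose nontrivially or force $\negh\in S$. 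Thus I expect $\negf\in\operatorname{PF}(S)$, and in fact $\negf$ should be a special gap because the hypothesis applied cleverly (or a separate check that $2\negf\in S$) yields $\negf\in\operatorname{SG}(S)$.

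For the forward direction, the strategy is to show that every pseudo-Frobenius element $\negh$ is either equal to $\negf$ or equal to $\frac{\negf}{2}$, which by the earlier Proposition forces $\operatorname{SG}(S)=\{\negf\}$ and hence irreducibility. Take any $\negh\in\operatorname{PF}(S)$. If $2\negh=\negf$ then $\negh=\frac{\negf}{2}$ and we are done; otherwise $2\negh\neq\negf$, so the hypothesis gives $\negf-\negh\in S$. The key step is to argue that $\negf-\negh=\mathbf{0}$, equivalently $\negh=\negf$: if $\negf-\negh\in S^*$, then since $\negh\in\operatorname{PF}(S)$ we have $\negh+(\negf-\negh)=\negf\in S$, contradicting $\negf\in\operatorname{H}(S)$. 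Hence $\negf-\negh=\mathbf{0}$, so $\negh=\negf$. This shows $\operatorname{PF}(S)\subseteq\{\negf,\frac{\negf}{2}\}$, and combined with $\negf\in\operatorname{PF}(S)$ this pins down $\operatorname{PF}(S)$ to be one of the two irreducible shapes.

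For the converse, I would assume $S$ is irreducible and use the Proposition characterizing irreducibility to split into the two cases. In the symmetric case $\operatorname{PF}(S)=\{\negf\}$, take any gap $\negh$ with $2\negh\neq\negf$; since $\negf$ is the unique maximal gap under $\leq_S$ (by Proposition~\ref{pseudofrobeniusmaximal}, as it is the only pseudo-Frobenius element), every gap $\negh$ satisfies $\negh\leq_S\negf$, i.e.\ $\negf-\negh\in S$, which is exactly the desired conclusion. In the pseudo-symmetric case $\operatorname{PF}(S)=\{\negf,\frac{\negf}{2}\}$, the maximal gaps are $\negf$ and $\frac{\negf}{2}$; here I would show that any gap $\negh$ with $2\negh\neq\negf$ (so $\negh\neq\frac{\negf}{2}$) still satisfies $\negh\leq_S\negf$. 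The point is that every gap is dominated under $\leq_S$ by some maximal element, namely either $\negf$ or $\frac{\negf}{2}$; a gap dominated only by $\frac{\negf}{2}$ would need separate handling, using that $\frac{\negf}{2}\leq_S\negf$ (since $\negf-\frac{\negf}{2}=\frac{\negf}{2}$, which need not lie in $S$) --- this is precisely the subtle point.

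The main obstacle I anticipate is the pseudo-symmetric case of the converse: one must verify that a gap $\negh\neq\frac{\negf}{2}$ with $\negh\leq_S\frac{\negf}{2}$ but not obviously $\leq_S\negf$ cannot exist, i.e.\ that $\frac{\negf}{2}$ does not create gaps escaping the difference condition. I expect this requires invoking the symmetry structure of pseudo-symmetric GNSs more carefully, perhaps showing directly from $\operatorname{PF}(S)=\{\negf,\frac{\negf}{2}\}$ that the only gap with $2\negh=\negf$ is $\frac{\negf}{2}$ itself and that all remaining gaps satisfy $\negf-\negh\in S$, which may follow from the genus formula $2g-1=|\negf+\mathbf{1}|_\times$ in Theorem~\ref{irreducible} by a counting argument pairing each gap $\negh\neq\frac{\negf}{2}$ with $\negf-\negh$.
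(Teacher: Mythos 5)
Your backward implication (the difference condition implies irreducibility) is correct and complete: the argument that $\negf$ is maximal in $\operatorname{H}(S)$ with respect to $\leq_S$, hence $\negf\in \operatorname{PF}(S)$, and the argument that any $\negh\in\operatorname{PF}(S)$ with $2\negh\neq\negf$ must equal $\negf$ (else $\negf=\negh+(\negf-\negh)\in S$), together with the characterization $\operatorname{SG}(S)=\{\negf\}\iff\operatorname{PF}(S)\in\{\{\negf\},\{\negf,\frac{\negf}{2}\}\}$, does the job (you should also record the trivial check that $2\negh\neq\negf$ whenever $\negh-\negf\in S^*$, so that the hypothesis may be applied there). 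The symmetric case of the forward implication is likewise fine. The genuine gap is the pseudo-symmetric case of the forward implication, which you yourself flag as ``precisely the subtle point'' and ``the main obstacle'': for a gap $\negh\neq\frac{\negf}{2}$ with $\negh\leq_S\frac{\negf}{2}$ you never establish $\negf-\negh\in S$, you only conjecture that a counting argument via $2g-1=|\negf+\mathbf{1}|_\times$ ``may'' work. A proof whose key case ends in an anticipated obstacle is not a proof of that case.

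The missing step has a one-line resolution that you overlook because you work only with the maximality description of $\operatorname{PF}(S)$ (Proposition \ref{pseudofrobeniusmaximal}) and not with its definition: if $\negh\leq_S\frac{\negf}{2}$ and $\negh\neq\frac{\negf}{2}$, then $\frac{\negf}{2}-\negh\in S^*$, and since $\frac{\negf}{2}\in\operatorname{PF}(S)$ the defining property of pseudo-Frobenius elements gives
$$\negf-\negh=\tfrac{\negf}{2}+\bigl(\tfrac{\negf}{2}-\negh\bigr)\in S.$$
(Note that transitivity through $\frac{\negf}{2}\leq_S\negf$ indeed fails, as you observed, since $\frac{\negf}{2}\notin S$; the point is that one adds an element of $S^*$ to the pseudo-Frobenius element $\frac{\negf}{2}$, not the other way around.) For completeness, your counting idea can also be made rigorous: in the pseudo-symmetric case $S$ is a Frobenius GNS with Frobenius element $\negf$, so $\operatorname{H}(S)\subseteq\pi(\negf)$; the involution $\negx\mapsto\negf-\negx$ of $\pi(\negf)$ sends $\operatorname{N}(\negf)$ injectively into $\operatorname{H}(S)\setminus\{\frac{\negf}{2}\}$, and $|\operatorname{N}(\negf)|=|\negf+\mathbf{1}|_\times-g=g-1=|\operatorname{H}(S)\setminus\{\frac{\negf}{2}\}|$ forces this map to be onto, which is exactly the desired conclusion — but none of these steps appear in your proposal, so as written the equivalence is only half proved.
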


In particular it is easy to see that the element $\negf$ mentioned in the previous result is exactly the Frobenius element of the irreducible GNS. Such an element provides a description of irreducible GNSs through a maximality condition.

	\begin{proposition}[\cite{irreducible}, Proposition 4.11] Let $S\subseteq \NN_0^d$ be a GNS. Then $S$ is irreducible with Frobenius element $\negf$ if and only if it is maximal with respect to the inclusion in the set of GNSs not containing $\negf$. In particular if $S$ is irreducible with Frobenius element $\negf$ then it is maximal among Frobenius GNSs having $\negf$ as Frobenius element.
		\label{maxirreducible}
	\end{proposition}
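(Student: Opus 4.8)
The plan is to prove the two implications separately, using Proposition~\ref{differenceFrobenius} for the forward direction and, for the converse, the criterion that $S\cup\{\negx\}$ is a GNS precisely when $\negx\in\operatorname{SG}(S)$ together with the characterization of irreducibility by $|\operatorname{SG}(S)|=1$.

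For the forward direction, suppose $S$ is irreducible with Frobenius element $\negf$. Then $\negf\in\operatorname{H}(S)$, so $S$ belongs to the family of GNSs not containing $\negf$. To prove maximality, I would take an arbitrary GNS $T$ with $S\subseteq T$ and $\negf\notin T$, and show $S=T$. If there were some $\negx\in T\setminus S$, then $\negx\in\operatorname{H}(S)$, and I would split into two cases. Recalling that the distinguished gap in Proposition~\ref{differenceFrobenius} is exactly $\negf$, if $2\negx\neq\negf$ then $\negf-\negx\in S\subseteq T$; since $T$ is a submonoid containing $\negx$, this gives $\negf=\negx+(\negf-\negx)\in T$, a contradiction. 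If $2\negx=\negf$, then $\negf=\negx+\negx\in T$ because $\negx\in T$, again a contradiction. Hence $T=S$.

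For the converse, suppose $S$ is maximal among GNSs not containing $\negf$, so that $\negf\in\operatorname{H}(S)$ and $\operatorname{H}(S)\neq\emptyset$. First I would show $\operatorname{SG}(S)\subseteq\{\negf\}$: if some $\negh\in\operatorname{SG}(S)$ had $\negh\neq\negf$, then $S\cup\{\negh\}$ would be a GNS properly containing $S$ and still missing $\negf$, contradicting maximality. It then remains to rule out $\operatorname{SG}(S)=\emptyset$. For this I would choose $\negh$ maximal in $\operatorname{H}(S)$ with respect to the natural partial order $\leq$ of $\NN_0^d$; such $\negh$ exists since $\operatorname{H}(S)$ is finite and nonempty, and $\negh\neq\mathbf 0$ because $\mathbf 0\in S$. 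For every $\negs\in S^*$ we have $\negh+\negs\geq\negh$ and $\negh+\negs\neq\negh$, so maximality forces $\negh+\negs\in S$; thus $\negh\in\operatorname{PF}(S)$. Likewise $2\negh\geq\negh$ and $2\negh\neq\negh$ give $2\negh\in S$, so $\negh\in\operatorname{SG}(S)$. Combining the two facts yields $\operatorname{SG}(S)=\{\negf\}$, and by the special-gap characterization $S$ is irreducible and a Frobenius GNS with Frobenius element $\negf$.

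The ``in particular'' assertion then follows at once: any Frobenius GNS with Frobenius element $\negf$ omits $\negf$, so the family of such semigroups sits inside the family of GNSs not containing $\negf$; as $S$ is maximal in the larger family and lies in the smaller one, it is maximal there as well. I expect the only real obstacle to be establishing $\operatorname{SG}(S)\neq\emptyset$ in the converse, which the maximal-element argument for the natural order settles; everything else is a direct application of the results recalled above.
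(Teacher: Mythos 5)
Your proof is correct. Note that there is no in-paper argument to compare it against: Proposition~\ref{maxirreducible} is recalled from the cited work on irreducible GNSs and is stated in this paper without proof, so your proposal stands as a self-contained derivation from the other recalled facts. Both directions are sound: the forward implication is a direct application of Proposition~\ref{differenceFrobenius} (together with the paper's remark that the distinguished gap there is precisely the Frobenius element), splitting off the case $2\negx=\negf$, where $\negf=\negx+\negx\in T$ anyway; the converse correctly combines the extension criterion ($S\cup\{\negx\}$ is a GNS if and only if $\negx\in\operatorname{SG}(S)$) to get $\operatorname{SG}(S)\subseteq\{\negf\}$, with the observation that a gap maximal for the natural partial order $\leq$ lies in $\operatorname{PF}(S)$ and has its double in $S$, which rules out $\operatorname{SG}(S)=\emptyset$ and forces $\operatorname{SG}(S)=\{\negf\}$; the characterization $|\operatorname{SG}(S)|=1$ then yields irreducibility and identifies $\negf$ as the Frobenius element. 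The ``in particular'' claim follows, as you say, because the Frobenius GNSs with Frobenius element $\negf$ form a subfamily of the GNSs omitting $\negf$.
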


\subsection{Almost symmetric GNSs} In the remainder of this section we introduce a new family of GNSs  by extending ideas formulated for numerical semigroups (see \cite{symmetries}) to the context of GNSs.

Let $S\subseteq \NN_0^d$ be a GNS. For each $\negh\in \NN_0^d$, let us consider the set 
$$\pi(\negh)=\{\negx\in \NN_0^d \ | \ \negx\leq \negh\}.$$
Notice that, according to the definition of $| \, \cdot \, |_\times$, we have $|\pi(\negh)|=|\negh+\mathbf{1}|_\times$. Furthermore, considering the subsets of $\pi(\mathbf{h})$
$$\operatorname{LH}(\negh)=\{\negx\in \operatorname{H}(S) \ | \ \negx\leq \negh\} \qquad \mbox{and} \qquad \operatorname{N}(\negh)=\{\negx\in S \ | \ \negx\leq \negh\},$$
since $\pi(\negh)=\operatorname{LH}(\negh)\cup \operatorname{N}(\negh)$, we thus obtain that $|\operatorname{LH}(\negh)|+|\operatorname{N}(\negh)|=|\negh+\mathbf{1}|_\times$. The following lemma provides an estimate for the cardinalities of $\operatorname{N}(\mathbf{h})$ and $\operatorname{LH}(\mathbf{h})$. 

\begin{lemma}[\cite{irreducible}, Lemma 5.5] Let $S\subseteq \NN^d_0$ be a GNS of genus $g$ and $\mathbf{h}\in \operatorname{H}(S)$. Then $|\operatorname{N}(\mathbf{h})|\leq |\operatorname{LH}(\mathbf{h})|\leq g$.
\label{lemmacardinalities}
\end{lemma}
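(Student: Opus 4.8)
The plan is to prove the two inequalities $|\operatorname{N}(\mathbf{h})|\leq |\operatorname{LH}(\mathbf{h})|$ and $|\operatorname{LH}(\mathbf{h})|\leq g$ separately, the second being immediate. Indeed, since $\operatorname{LH}(\mathbf{h})=\{\mathbf{x}\in\operatorname{H}(S)\mid \mathbf{x}\leq\mathbf{h}\}$ is by definition a subset of the full set of gaps $\operatorname{H}(S)$, and $g=|\operatorname{H}(S)|$, we get $|\operatorname{LH}(\mathbf{h})|\leq g$ at once. So the substance of the lemma lies in the first inequality $|\operatorname{N}(\mathbf{h})|\leq|\operatorname{LH}(\mathbf{h})|$.

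For this, the natural strategy is to exhibit an injective map from $\operatorname{N}(\mathbf{h})$ into $\operatorname{LH}(\mathbf{h})$. The map I would use is the ``reflection'' through $\mathbf{h}$, namely $\varphi(\mathbf{x})=\mathbf{h}-\mathbf{x}$, where the difference is taken in $\mathbb{Z}^d$. First I would check that $\varphi$ is well-defined as a map $\operatorname{N}(\mathbf{h})\to\pi(\mathbf{h})$: if $\mathbf{x}\in\operatorname{N}(\mathbf{h})$ then $\mathbf{x}\leq\mathbf{h}$, so $\mathbf{h}-\mathbf{x}\in\NN_0^d$ and moreover $\mathbf{h}-\mathbf{x}\leq\mathbf{h}$, hence $\varphi(\mathbf{x})\in\pi(\mathbf{h})$. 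Injectivity is clear since subtraction from a fixed vector in $\mathbb{Z}^d$ is injective.

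The key step is then showing that $\varphi$ actually lands in $\operatorname{LH}(\mathbf{h})$, that is, $\mathbf{h}-\mathbf{x}\in\operatorname{H}(S)$ whenever $\mathbf{x}\in\operatorname{N}(\mathbf{h})$. Suppose for contradiction that $\mathbf{h}-\mathbf{x}\in S$ for some $\mathbf{x}\in\operatorname{N}(\mathbf{h})\subseteq S$. Then both $\mathbf{x}$ and $\mathbf{h}-\mathbf{x}$ lie in $S$, and since $S$ is closed under addition their sum $(\mathbf{h}-\mathbf{x})+\mathbf{x}=\mathbf{h}$ would belong to $S$, contradicting $\mathbf{h}\in\operatorname{H}(S)$. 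Therefore $\mathbf{h}-\mathbf{x}\in\operatorname{H}(S)$, and combined with $\mathbf{h}-\mathbf{x}\leq\mathbf{h}$ this gives $\varphi(\mathbf{x})\in\operatorname{LH}(\mathbf{h})$. Hence $\varphi$ is an injection $\operatorname{N}(\mathbf{h})\hookrightarrow\operatorname{LH}(\mathbf{h})$, yielding $|\operatorname{N}(\mathbf{h})|\leq|\operatorname{LH}(\mathbf{h})|$.

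I expect no serious obstacle here; the argument is a clean injection built from closure of $S$ under addition. The only points requiring a little care are the bookkeeping that $\mathbf{h}-\mathbf{x}$ remains coordinatewise below $\mathbf{h}$ (so that it lands in $\pi(\mathbf{h})$ at all), and being explicit that the element $\mathbf{0}$ maps to $\mathbf{h}$ itself, which is legitimate since $\mathbf{h}\in\operatorname{LH}(\mathbf{h})$. Chaining the two inequalities gives $|\operatorname{N}(\mathbf{h})|\leq|\operatorname{LH}(\mathbf{h})|\leq g$, as claimed.
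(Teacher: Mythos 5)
Your proof is correct, and it is the natural one: the reflection map $\varphi(\mathbf{x})=\mathbf{h}-\mathbf{x}$ is exactly the device this paper itself relies on (it is the map $\psi_{\mathbf{h}'}$ of Definition \ref{map}, used to prove Proposition \ref{ASformula}). Note that the paper does not actually prove this lemma—it imports it from \cite{irreducible} (Lemma 5.5)—but your argument, injecting $\operatorname{N}(\mathbf{h})$ into $\operatorname{LH}(\mathbf{h})$ via closure of $S$ under addition and observing $\operatorname{LH}(\mathbf{h})\subseteq\operatorname{H}(S)$, is precisely the standard argument and fills that gap correctly.
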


It follows from the previous result that, if $S$ has genus $g$, then it holds that $$2g\geq |\negh+\mathbf{1}|_\times \ \mbox{for any} \ \negh\in  \operatorname{H}(S).$$
This formula extends the well-know  upper bound of a gap in a numerical semigroup. In general, we may sharp the formula above by employing the type of $S$ in the analysis through the following.

\begin{definition}\label{map} Let $S\subseteq \NN^d_0$ be a GNS and let $\mathbf{h}'\in \operatorname{PF}(S)$. We define the following map:
$$\psi_{\mathbf{h}'} \ : \ \operatorname{N}(\negh') \ \to \ \operatorname{H}(S)\backslash(\operatorname{PF}(S)\backslash \{\negh'\}), \quad \negx\longmapsto \negh'-\negx.$$
\end{definition}
It is easily seen that $\psi_{\mathbf{h}'}$ is a well-defined and injective map. 

\begin{proposition}\label{ASformula} Let $S\subseteq \NN_0^d$ be a GNS with genus $g$ and type $t$. For any $\, \negh\in \operatorname{H}(S)$, it holds that
$$2g+1-t\geq |\negh+\mathbf{1}|_\times.$$
\end{proposition}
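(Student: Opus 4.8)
The plan is to sharpen the inequality $2g \geq |\negh+\mathbf{1}|_\times$ coming from Lemma~\ref{lemmacardinalities} by a careful accounting of which gaps lie in $\operatorname{LH}(\negh)$ versus which ones get ``used up'' by the pseudo-Frobenius elements. Fix $\negh\in\operatorname{H}(S)$. Since $\operatorname{PF}(S)$ is exactly the set of maximal gaps under $\leq_S$ (Proposition~\ref{pseudofrobeniusmaximal}), there is a unique $\negh'\in\operatorname{PF}(S)$ with $\negh\leq_S\negh'$, i.e.\ $\negh'-\negh\in S$, and hence in particular $\negh\leq\negh'$ componentwise. So I would first replace the estimate on $|\operatorname{LH}(\negh)|$ by the stronger estimate on $|\operatorname{LH}(\negh')|$, reducing the problem to proving the claim for a pseudo-Frobenius element $\negh'$ and then noting $|\negh+\mathbf 1|_\times\leq|\negh'+\mathbf 1|_\times$.

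First I would record the exact identity $|\operatorname{LH}(\negh')|+|\operatorname{N}(\negh')|=|\negh'+\mathbf 1|_\times$. The main point is then to bound $|\operatorname{LH}(\negh')|$ above by $2g+1-t-|\operatorname{N}(\negh')|$. The engine for this is the injective map $\psi_{\negh'}\colon\operatorname{N}(\negh')\to\operatorname{H}(S)\setminus(\operatorname{PF}(S)\setminus\{\negh'\})$ of Definition~\ref{map}. Its image is a set of $|\operatorname{N}(\negh')|$ distinct gaps, none of which lies in $\operatorname{PF}(S)\setminus\{\negh'\}$; moreover each image element $\negh'-\negx$ satisfies $\negh'-\negx\leq\negh'$, so in fact the image lands inside $\operatorname{LH}(\negh')$. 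The key step is therefore to count the gaps below $\negh'$ in two overlapping ways: I would split $\operatorname{H}(S)$ as those gaps that are $\leq\negh'$ and those that are not, and separately keep track of the $t-1$ pseudo-Frobenius elements other than $\negh'$.

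The cleanest route is to work with the complementary set $\operatorname{H}(S)\setminus\operatorname{LH}(\negh')$, the gaps that are \emph{not} $\leq\negh'$; call its cardinality $r$, so that $|\operatorname{LH}(\negh')|=g-r$. Every element of $\operatorname{PF}(S)\setminus\{\negh'\}$ is a gap that is not $\leq\negh'$ (two distinct maximal gaps cannot be comparable in $\leq_S$, which forces incomparability in $\leq$ as well, since $\negh'$ being maximal rules out any strictly larger gap), so these $t-1$ elements all sit in the complement, giving $r\geq t-1$. Combining, $|\operatorname{LH}(\negh')|=g-r\leq g-(t-1)=g-t+1$. Plugging this into the identity yields
\[
|\negh'+\mathbf 1|_\times=|\operatorname{LH}(\negh')|+|\operatorname{N}(\negh')|\leq (g-t+1)+g=2g+1-t,
\]
where I have also used $|\operatorname{N}(\negh')|\leq|\operatorname{LH}(\negh')|\leq g$ from Lemma~\ref{lemmacardinalities} to control the second summand by $g$. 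Since $|\negh+\mathbf 1|_\times\leq|\negh'+\mathbf 1|_\times$ by $\negh\leq\negh'$, the desired bound follows.

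The step I expect to be the genuine obstacle is justifying that each of the $t-1$ pseudo-Frobenius elements different from $\negh'$ really fails to be $\leq\negh'$ in the \emph{natural} componentwise order (as opposed to $\leq_S$), since the whole bookkeeping of $r\geq t-1$ rests on that. Here I would argue that if $\negh''\in\operatorname{PF}(S)$ satisfied $\negh''\leq\negh'$ with $\negh''\neq\negh'$, then $\negh'-\negh''\in\NN_0^d\setminus\{\mathbf 0\}$; one must then show $\negh'-\negh''\in S$ (so that $\negh''<_S\negh'$, contradicting maximality of $\negh''$), which is not automatic and is exactly where the pseudo-Frobenius property of $\negh''$ or a refinement of the counting must be invoked. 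If this direct comparison is delicate, the safer alternative is to argue entirely through the injection $\psi_{\negh'}$: its image consists of $|\operatorname{N}(\negh')|$ gaps inside $\operatorname{LH}(\negh')$ that avoid $\operatorname{PF}(S)\setminus\{\negh'\}$, and combining this disjointness with the partition of $\operatorname{H}(S)$ gives the count $2g+1-t$ without ever needing the componentwise comparison of distinct pseudo-Frobenius elements.
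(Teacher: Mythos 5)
Your main argument contains a genuine error at exactly the spot you flagged as the obstacle. The claim that every element of $\operatorname{PF}(S)\setminus\{\negh'\}$ fails to satisfy $\negh''\leq \negh'$ in the natural componentwise order is false, and your parenthetical justification conflates the two orders: $\leq_S$-incomparability only says $\negh'-\negh''\notin S$, which is perfectly compatible with $\negh'-\negh''\in \NN_0^d$ (the difference is then simply a gap). The paper's own Example~\ref{ex1} is a counterexample: for $S=\NN_0^2\setminus\{(1,0),(2,0),(3,0),(4,0),(6,0),(9,0)\}$ one has $\operatorname{PF}(S)=\{(3,0),(6,0),(9,0)\}$, all pairwise comparable under $\leq$ (e.g.\ $(6,0)\leq(9,0)$ while $(9,0)-(6,0)=(3,0)\notin S$). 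Taking $\negh'=(9,0)$ there, your quantity $r$ equals $0$, not $\geq t-1=2$, and your claimed bound $|\operatorname{LH}(\negh')|\leq g-t+1=4$ fails since $|\operatorname{LH}(\negh')|=6=g$. The final inequality survives in that example only by compensation: $|\operatorname{N}(\negh')|=4$ is well below the crude bound $g$ you assigned to it. In other words, the deduction of $t-1$ must be charged to $\operatorname{N}(\negh')$, not to $\operatorname{LH}(\negh')$, and no counting argument can place it on the $\operatorname{LH}$ side. (A minor additional slip: the $\negh'\in\operatorname{PF}(S)$ with $\negh\leq_S\negh'$ need not be unique; existence, from Proposition~\ref{pseudofrobeniusmaximal}, is all that is needed.)

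Your one-sentence fallback is in fact the paper's proof and is the route you should have taken as primary: the image of the injection $\psi_{\negh'}$ lies in $\operatorname{H}(S)\setminus(\operatorname{PF}(S)\setminus\{\negh'\})$, a set of cardinality exactly $g-(t-1)$, so $|\operatorname{N}(\negh')|\leq g+1-t$; combining with $|\operatorname{LH}(\negh')|\leq g$ from Lemma~\ref{lemmacardinalities} gives $|\negh'+\mathbf{1}|_\times=|\operatorname{LH}(\negh')|+|\operatorname{N}(\negh')|\leq 2g+1-t$, and the reduction from general $\negh$ to $\negh'\in\operatorname{PF}(S)$ via $\leq_S$-maximality (which you set up correctly at the start) finishes the proof. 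As written, though, this fallback is only gestured at; if you keep your formulation that the image of $\psi_{\negh'}$ lies inside $\operatorname{LH}(\negh')$ avoiding $\operatorname{PF}(S)\setminus\{\negh'\}$, the count requires extra bookkeeping (splitting $\operatorname{PF}(S)\setminus\{\negh'\}$ into the part inside and outside $\operatorname{LH}(\negh')$), whereas using $\operatorname{H}(S)$ itself as the ambient set makes the estimate immediate.
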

\begin{proof} Given $\negh'\in \operatorname{PF}(S)$, since the map $\psi_{\mathbf{h}'}$ is injective, we get
$$|\operatorname{N}(\negh')|\leq |\operatorname{H}(S)\backslash(\operatorname{PF}(S)\backslash \{\negh'\})|=g-(t-1)=g+1-t.$$
Hence, by using Lemma \ref{lemmacardinalities}, we obtain
$$|\negh'+\mathbf{1}|_\times=|\operatorname{LH}(\negh')|+|\operatorname{N}(\negh')|\leq g+(g+1-t)=2g+1-t$$
for any $\negh'\in \operatorname{PF}(S)$. Now, as $\operatorname{PF}(S)$ consists of the maximal elements in $\operatorname{H}(S)$ with respect to $\leq_S$ by Proposition \ref{pseudofrobeniusmaximal}, given an $\negh\in \operatorname{H}(S)$, there exists an $\negh'\in \operatorname{PF}(S)$ such that $\negh\leq_S \negh'$. In particular, we have $\negh\leq \negh'$ and therefore $|\negh+\mathbf{1}|_\times\leq |\negh'+\mathbf{1}|_\times\leq 2g+1-t$, which proves the desired formula for any $\negh\in \operatorname{H}(S)$.
\end{proof}

The upper-bound given in Proposition \ref{ASformula} motivates the following definition.

\begin{definition}\label{ASdef} Let $S$ be a GNS with genus $g$ and type $t$. We say that $S$ is \emph{almost symmetric} if
$$2g+1-t= |\negh+\textbf{1}|_\times$$
for some $\negh\in \operatorname{H}(S)$.
\end{definition}

Observe that Theorem \ref{irreducible} provides the irreducible GNSs as a subclass of almost symmetry GNSs.

\begin{example}\label{ex1} Consider $S=\NN_0^2\setminus \{(1,0),(2,0),(3,0),(4,0),(6,0), (9,0)\}$ the GNS with genus $g=6$ and Frobenius element $(9,0)$. A straightforward verification shows that the set of pseudo-Frobenius elements of $S$ is $\operatorname{PF}(S)=\{(9,0), (6,0), (3,0)\}$, and thus $t=\operatorname{t}(S)=3$. Hence, $S$ is almost symmetric because
$$2g+1-t=12+1-3=|(10,1)|_\times=10\cdot 1=10.$$
\end{example}

\begin{example}\label{ex2} The so-called \emph{ordinary GNSs}, i.e. GNSs of kind $S(\negf)=(\NN_0^d\backslash \pi(\negf))\cup \{\textbf{0}\}$ for some $\negf\in \NN_0^d$, are examples of almost symmetric GNSs. Indeed, for $S(\negf)$ an ordinary GNS, we have $\operatorname{PF}(S(\negf))=\operatorname{H}(S(\negf))$ and then $\operatorname{g}(S(\negf))=\operatorname{t}(S(\negf))$. Hence, it follows that $2\operatorname{g}(S(\negf))+1-\operatorname{t}(S(\negf))=\operatorname{g}(S(\negf))+1=|\negf+\mathbf{1}|_\times$ and $S(\negf)$ is almost symmetric. 
\end{example}



It is natural to wonder how many $\mathbf{h}\in \operatorname{H}(S)$ can satisfy the Definition \ref{ASdef} for $S$ an almost symmetric GNS. The answer is a consequence of the following result:

\begin{proposition} \label{remarkFrobenius}
Every almost symmetric GNS is a Frobenius GNS. In particular if $(S, \negf)$ is an almost symmetric GNS with genus $g$ and type $t$, then $2g+1-t= |\negf+\mathbf{1}|_\times$.
\end{proposition}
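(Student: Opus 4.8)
The plan is to extract a single gap that attains the bound of Proposition \ref{ASformula}, show it must be a pseudo-Frobenius element, and then exploit the equality case in the chain of inequalities from that proposition to force every gap to lie below it in the natural order.

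First I would take a gap $\negh\in\operatorname{H}(S)$ realizing $2g+1-t=|\negh+\mathbf{1}|_\times$, which exists by the hypothesis of almost-symmetry. Running the argument in the proof of Proposition \ref{ASformula}, there is $\negh'\in\operatorname{PF}(S)$ with $\negh\leq_S\negh'$, hence $\negh\leq\negh'$ for the natural order, and consequently
$$2g+1-t=|\negh+\mathbf{1}|_\times\leq|\negh'+\mathbf{1}|_\times\leq 2g+1-t.$$
Thus all these quantities coincide; in particular $|\negh+\mathbf{1}|_\times=|\negh'+\mathbf{1}|_\times$ while $\negh+\mathbf{1}\leq\negh'+\mathbf{1}$ componentwise. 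Since every component of $\negh+\mathbf{1}$ and of $\negh'+\mathbf{1}$ is a positive integer, equality of the products of two componentwise-ordered tuples of positive integers forces $\negh+\mathbf{1}=\negh'+\mathbf{1}$, i.e. $\negh=\negh'$. Therefore the gap realizing the bound already lies in $\operatorname{PF}(S)$; call it $\negf$.

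Next I would revisit the estimate for $\negf$ itself. By Lemma \ref{lemmacardinalities} and the injectivity of $\psi_{\negf}$, we have
$$|\negf+\mathbf{1}|_\times=|\operatorname{LH}(\negf)|+|\operatorname{N}(\negf)|\leq g+(g+1-t)=2g+1-t,$$
and since $\negf$ attains the bound, every inequality here must be an equality. In particular $|\operatorname{LH}(\negf)|=g$. Because $\operatorname{LH}(\negf)\subseteq\operatorname{H}(S)$ and $|\operatorname{H}(S)|=g$, this yields $\operatorname{LH}(\negf)=\operatorname{H}(S)$; that is, every gap satisfies $\negh\leq\negf$ in the natural partial order of $\NN_0^d$. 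Consequently $\negf$ is the unique maximal element of $\operatorname{H}(S)$ with respect to $\leq$, so $S$ is a Frobenius GNS with Frobenius element $\negf$, and by construction $2g+1-t=|\negf+\mathbf{1}|_\times$.

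I expect the only delicate point to be the elementary observation that componentwise domination together with equal products of positive integers forces equality entrywise; everything else follows by reading off the equality case of the inequalities already established in Proposition \ref{ASformula}. As a byproduct this also answers the motivating question preceding the statement: any gap satisfying Definition \ref{ASdef} must be a maximal gap and hence coincides with $\negf$, so exactly one such $\negh$ exists.
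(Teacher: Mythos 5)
Your proof is correct, and its backbone is the same as the paper's: both exploit the equality case of the two estimates $|\operatorname{LH}(\mathbf{h})|\leq g$ and $|\operatorname{N}(\mathbf{h})|\leq g+1-t$, conclude $\operatorname{LH}(\mathbf{h})=\operatorname{H}(S)$ by comparing cardinalities, and read off that $\mathbf{h}$ is the unique maximal gap, hence the Frobenius element. Where you genuinely differ is in how the second estimate is secured for the bound-attaining gap. The paper applies $|\operatorname{N}(\mathbf{h})|\leq g+1-t$ to an arbitrary gap $\mathbf{h}$, citing ``the proof of Proposition~\ref{ASformula}''; strictly speaking, that proof establishes this bound only for $\mathbf{h}'\in\operatorname{PF}(S)$, where the map $\psi_{\mathbf{h}'}$ of Definition~\ref{map} is defined, and extending it to arbitrary gaps requires the small additional observation that $\operatorname{N}(\mathbf{h})\subseteq\operatorname{N}(\mathbf{h}')$ whenever $\mathbf{h}\leq\mathbf{h}'$. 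You sidestep this entirely by first proving that the bound-attaining gap is itself a pseudo-Frobenius element, using the rigidity fact that componentwise domination together with equal products of positive integers forces entrywise equality; after that, the injectivity of $\psi_{\mathbf{f}}$ applies verbatim. Your route is slightly longer but self-contained, and it makes explicit a step the paper leaves to the reader; it also delivers the uniqueness of the gap satisfying Definition~\ref{ASdef} as an immediate byproduct, exactly as the paper's proof does.
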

\begin{proof}
Let $S\subseteq \NN_0^d$ be an almost symmetric GNS and $\negh \in \operatorname{H}(S)$ such that Definition~\ref{ASdef} is satisfied. It follows from Lemma \ref{lemmacardinalities} and the proof of Proposition~\ref{ASformula} that
$$|\operatorname{LH}(\negh)|\leq g \qquad \mbox{and} \ \qquad |\operatorname{N}(\negh)|\leq g+1-t.$$
Hence, as $|\negh+\mathbf{1}|_\times=|\operatorname{LH}(\negh)|+|\operatorname{N}(\negh)|$, the upper-bound $2g+1-t$ given in Proposition~\ref{ASformula} is reached by $|\negh+\mathbf{1}|_\times$ if and only if equalities hold for the cardinalities $|\operatorname{LH}(\negh)|$ and $|\operatorname{N}(\negh)|$ in the formulas above. Consequently, $\operatorname{H}(S)=\operatorname{LH}(\negh)$, which yields $\negh$ as the unique maximal element in $\operatorname{H}(S)$ with respect to the natural partial order $\leq$. Therefore, for an almost symmetric GNS $S$, there exists exactly one element $\mathbf{h}\in \operatorname{H}(S)$ satisfying Definition \ref{ASdef} and it is the Frobenius element of $S$.
\end{proof}

We mention that it is asked in \cite{irreducible} about the possibility of classifying other classes of Frobenius GNSs than the irreducible ones. The previous proposition moves also in the direction concerning that question.

	\begin{remark} Putting together Propositions \ref{ASformula} and \ref{remarkFrobenius}, we can deduce a  maximality property for almost symmetric GNSs: every almost symmetric GNS is maximal with respect to the inclusion in the set of Frobenius GNSs having fixed the type and the Frobenius element. It occurs  because, according to the aforementioned results, almost symmetric GNSs reach the least genus possible among the Frobenius GNSs with fixed type and Frobenius element. Notice nevertheless that not every GNS with Frobenius element $\negf$ and type $t$ is contained in an almost symmetric GNS with the same Frobenius element and type. For instance, $S=S((2,1))\cup {(0,1)}$ has Frobenius element $(2,1)$ and type $2$ and there are not almost symmetric GNSs with Frobenius element $(2,1)$ and type $2$, since in this case $2g-t+1$ is an odd number, while $|\negf+\mathbf{1}|_\times=6$ is even.
	\end{remark}

\begin{remark}\label{remarkbijection} It is worth noting that Proposition~\ref{remarkFrobenius} provides that a Frobenius GNS $(S,\mathbf{f})$ in $\NN_0^d$ is almost symmetric if and only if the map $\psi_{\mathbf{f}}$ is a bijection. Note further that this equivalence yields a generalization of Proposition \ref{differenceFrobenius} since, in view of Definition \ref{map}, it means that $S$ is almost symmetric if and only if for every $\mathbf{h}\in \operatorname{H}(S)\setminus (\operatorname{PF}(S)\setminus\{\mathbf{f}\})$ we have that $\mathbf{f}-\mathbf{h}\in S$.
\end{remark}

\section{Symmetries in almost symmetric GNSs}

In this section we investigate properties regarding symmetries of the pseudo-Frobenius elements and elements in Apéry sets occurring in almost symmetric GNSs. The study of such properties follows the spirit of \cite{symmetries} to extend many results given for numerical semigroups to the GNS setting, providing some equivalences to the notion of almost-symmetry.

We begin recalling that a total order $\prec$ in $\NN_0^{d}$ is called a \emph{monomial order} if it satisfies:
\begin{itemize}
\item[1)]if $\mathbf{v},\mathbf{w}\in \mathbb{N}_0^{d}$ with $\mathbf{v}\prec\mathbf{w}$ then $\mathbf{v}+\mathbf{u}\prec\mathbf{w}+\mathbf{u}$ for every $\mathbf{u}\in \mathbb{N}_0^{d}$; and
\item[2)]if $\mathbf{v}\in \mathbb{N}_0^{d}$ and $\mathbf{v}\neq\mathbf{0}$ then $\mathbf{0}\prec\mathbf{v}$.
\end{itemize}
Recall further that every monomial order in $\NN_0^d$ extends the natural partial order in $\NN_0^d$ (see also \cite[Proposition 4.4]{irreducible}). The next technical lemma is about certain finite sets in $\NN_0^d$ ordered by a monomial order.

\begin{lemma}
Let $\prec$ be a monomial order in $\NN_0^{d}$ and let $\{\negx_{1}\prec \negx_{2}\prec \cdots \prec \negx_{t-1}\}$ be a subset of $\NN_0^{d}$. Suppose that there exists $\negf\in \NN_0^d$ such that for all $i\in \{1,\ldots,t-1\}$ there exists $j\in \{1,\ldots,t-1\}$ with $\negx_i+\negx_j=\negf$. Then $\negx_i+\negx_{t-i}=\negf$ for all $i\in \{1,\ldots,t-1\}$.
\label{ordered}
\end{lemma}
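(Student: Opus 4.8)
The plan is to encode the pairing hypothesis as a map on indices and show this map must be the order-reversing involution $i \mapsto t-i$.

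First I would observe that since the elements $\negx_1,\ldots,\negx_{t-1}$ are pairwise distinct, for each $i$ the index $j$ with $\negx_i+\negx_j=\negf$ (which exists by hypothesis) is in fact unique: the relation forces $\negx_j=\negf-\negx_i$, so $\negx_j$, hence $j$, is determined. This yields a well-defined map $\sigma:\{1,\ldots,t-1\}\to\{1,\ldots,t-1\}$ characterized by $\negx_i+\negx_{\sigma(i)}=\negf$. Because the defining relation is symmetric in $i$ and $\sigma(i)$, we immediately get $\sigma(\sigma(i))=i$, so $\sigma$ is an involution and in particular a bijection.

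The key step is to prove that $\sigma$ is strictly order-reversing with respect to the index order (equivalently, with respect to $\prec$). Assume $i<k$, so $\negx_i\prec\negx_k$, and suppose toward a contradiction that $\negx_{\sigma(i)}\preceq\negx_{\sigma(k)}$. The equality case $\negx_{\sigma(i)}=\negx_{\sigma(k)}$ would give $\sigma(i)=\sigma(k)$, hence $i=k$ by injectivity of $\sigma$, a contradiction. In the strict case $\negx_{\sigma(i)}\prec\negx_{\sigma(k)}$ I would invoke the compatibility property (1) of the monomial order twice: adding $\negx_i$ gives $\negx_i+\negx_{\sigma(i)}\prec\negx_i+\negx_{\sigma(k)}$, while adding $\negx_{\sigma(k)}$ to $\negx_i\prec\negx_k$ gives $\negx_i+\negx_{\sigma(k)}\prec\negx_k+\negx_{\sigma(k)}$; chaining these two strict inequalities contradicts $\negx_i+\negx_{\sigma(i)}=\negf=\negx_k+\negx_{\sigma(k)}$. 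Hence $\sigma(k)<\sigma(i)$, which shows that $i<k$ implies $\sigma(i)>\sigma(k)$.

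Finally, a strictly decreasing bijection of the finite totally ordered set $\{1,\ldots,t-1\}$ is unique: its list of values $\sigma(1)>\sigma(2)>\cdots>\sigma(t-1)$ is a strictly decreasing rearrangement of $\{1,\ldots,t-1\}$, so it must be $t-1,t-2,\ldots,1$, forcing $\sigma(i)=(t-1)+1-i=t-i$. Therefore $\negx_i+\negx_{t-i}=\negf$ for every $i\in\{1,\ldots,t-1\}$, as claimed. I expect the order-reversing step to be the only substantive point, and it depends solely on the additive compatibility of the monomial order; the remaining arguments are routine bookkeeping about involutions and decreasing bijections of a finite chain.
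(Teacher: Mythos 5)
Your proof is correct, but it is organized quite differently from the paper's. The paper argues by minimal counterexample: it picks the least index $i$ with $\negx_i+\negx_{t-i}\neq\negf$ and then splits into cases according to whether the partner index $j$ (with $\negx_i+\negx_j=\negf$) satisfies $j>t-i$ or $j<t-i$, deriving a contradiction in each case --- in one case from the minimality of $i$ plus cancellation, in the other from the chained strict inequality $\negf=\negx_i+\negx_j\prec\negx_i+\negx_{t-i}\prec\negx_k+\negx_{t-i}=\negf$. You instead package the hypothesis into a map $\sigma$ on indices, check it is a well-defined involution (using exactly the same cancellation-plus-distinctness fact the paper uses implicitly), prove $\sigma$ is strictly order-reversing via the same translation-invariance trick that yields $\negf\prec\negf$, and finish with the purely combinatorial observation that the only strictly decreasing bijection of the chain $\{1,\ldots,t-1\}$ is $i\mapsto t-i$. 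The two arguments rest on the same two ingredients (cancellation and compatibility of $\prec$ with addition), but yours replaces the paper's index-chasing case analysis with a structural statement about the pairing map; this is arguably cleaner, isolates where the monomial-order hypothesis is actually used (only in the order-reversal step), and would transfer verbatim to any cancellative monoid equipped with a translation-compatible total order, whereas the paper's version is shorter to state but requires the reader to track the interaction between the minimal choice of $i$ and the several subcases.
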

\begin{proof}
Suppose that, contrary to our claim, there is $i\in \{1,\ldots,t-1\}$ such that $\negx_i+\negx_{t-i}\neq \negf$. Hence, we can choose $i=\min \{\,l \ \mid \ 1\leq l\leq t-1 \ \mbox{with} \ \negx_l+\negx_{t-l}\neq \negf\}$. By assumption, for such $i$ there exists $j\in \{1,\ldots,t-1\}$ such that $\negx_i+\negx_j=\negf$ with $j\neq t-i$. If $j>t-i$ then $j=t-(i-k)$ with $0<k\leq t-1$. Moreover $i-k<i$, and so $\negx_{i-k}+\negx_{t-(i-k)}=\negf =\negx_i+\negx_j$, that is, $\negx_i=\negx_{i-k}$, which gives a contradiction. Otherwise, if $j<t-i$, consider $k\in \{1,\ldots,t-1\}$ such that $\negx_{t-i}+\negx_{k}=\negf$. Since $k\neq i$, we have the following two cases:
\begin{itemize}
\item If $k<i$ then $\negx_{k}+\negx_{t-k}=\negf=\negx_{t-i}+\negx_{k}$, in particular $\negx_{t-k}=\negx_{t-i}$, which yields $k=i$, a contradiction.
\item If $k> i$ then $\negf=\negx_i+\negx_j\prec \negx_i+\negx_{t-i}\prec \negx_k+\negx_{t-i}=\negf$, a contradiction.
\end{itemize}
Since all cases give a contradiction, it holds that $\negx_i+\negx_{t-i}= \negf$ for all $i\in \{1,\ldots,t-1\}$.
\end{proof}


\subsection{Pseudo-Frobenius elements} We show in the following that the set of pseudo-Frobenius elements plays an important role in the determination of the property of almost-symmetry in GNSs.


\begin{definition} Let $(S,\mathbf{f})$ be a Frobenius GNS in $\NN_0^{d}$. We define the related sets to $S$:
\begin{enumerate}[\rm (a)]
\item $\operatorname{Z}_{S}=\{\mathbf{x}\in \mathbb{Z}^d \mid \mathbf{x}\leq \mathbf{f}\}\cup \mathbb{N}_0^d$, observe that $\operatorname{H}(S)\subset \operatorname{Z}_{S}$;
\item $\operatorname{L}(S)=\{\negh\in \operatorname{H}(S) \mid \ \exists \ \negh'\in \operatorname{H}(S), \ \negh+\negh'=\negf\}$.
\end{enumerate}
\end{definition}

The next result establishes some equivalent conditions for a GNS to be almost symmetric in terms of their pseudo-Frobenius elements.

\begin{proposition} Let $(S,\negf)$ be a Frobenius GNS in $\NN_0^d$ with type $t$. Then the following statements are equivalent:
\begin{enumerate}[\rm i)]
\item  $S$ is almost symmetric;
\item $\operatorname{L}(S)\subseteq \operatorname{PF}(S)$;
\item $\operatorname{PF}(S)=\operatorname{L}(S)\cup \{\negf\}$;
\item if $\mathbf{h}\in \operatorname{Z}_{S}\setminus S$ then either $\mathbf{f}-\mathbf{h}\in S$ or $\mathbf{f}-\mathbf{h}\in \operatorname{PF}(S)$;
\item if $\negh\in \operatorname{H}(S)$ then either $\negf-\negh\in S$ or $\negf-\negh\in \operatorname{PF}(S)$;
\item for every monomial order $\prec$ in $\NN_0^{d}$, if  $$\operatorname{PF}(S)\setminus \{\mathbf{f}\}=\{\mathbf{x}_{1}\prec \mathbf{x}_{2}\prec \cdots \prec \mathbf{x}_{t-1}\},$$ then 
$$\mathbf{x}_{i}+\mathbf{x}_{t-i}=\mathbf{f} \quad \mbox{for} \ \ i=1,\ldots,t-1.$$
\item there exists a monomial order $\prec$ in $\NN_0^{d}$ such that if $$\operatorname{PF}(S)\setminus \{\mathbf{f}\}=\{\mathbf{x}_{1}\prec \mathbf{x}_{2}\prec \cdots \prec \mathbf{x}_{t-1}\},$$ then 
$$\mathbf{x}_{i}+\mathbf{x}_{t-i}=\mathbf{f} \quad \mbox{for} \ \ i=1,\ldots,t-1.$$
\end{enumerate}
\label{bigprop}
\end{proposition}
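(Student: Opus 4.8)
The plan is to prove the seven conditions equivalent through a web of implications, after isolating two facts valid for \emph{any} Frobenius GNS $(S,\negf)$ that make several of the equivalences formal. The first such fact is that $\negf\in\operatorname{PF}(S)$ (if $\negf\leq_S\negh''$ for a gap $\negh''$ then $\negh''=\negf+\negs$ with $\negs\in S^*$, forcing $\negh''$ to exceed $\negf$ in the natural order, impossible) together with $\operatorname{PF}(S)\setminus\{\negf\}\subseteq\operatorname{L}(S)$: for $\mathbf{p}\in\operatorname{PF}(S)$ with $\mathbf{p}\neq\negf$, the element $\negf-\mathbf{p}\in\NN_0^d\setminus\{\mathbf{0}\}$ cannot lie in $S$, since otherwise $\mathbf{p}+(\negf-\mathbf{p})=\negf\in S$; hence $\negf-\mathbf{p}\in\operatorname{H}(S)$ and $\mathbf{p}\in\operatorname{L}(S)$. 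Together these give $\operatorname{PF}(S)\subseteq\operatorname{L}(S)\cup\{\negf\}$ unconditionally, so that ii) and iii) are formally equivalent.

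Next I would dispose of the order-theoretic conditions. Using Lemma \ref{ordered}, I plan to show that vi), vii), and the single symmetry condition
$$(\ast)\qquad \negf-\mathbf{p}\in\operatorname{PF}(S)\ \text{ for every }\ \mathbf{p}\in\operatorname{PF}(S)\setminus\{\negf\}$$
coincide: vii) trivially yields $(\ast)$ since $\negf-\mathbf{x}_i=\mathbf{x}_{t-i}\in\operatorname{PF}(S)$; conversely, given $(\ast)$, for any monomial order each $\negf-\mathbf{x}_i$ lies in $\operatorname{PF}(S)\setminus\{\negf\}$ (as $\mathbf{x}_i\neq\mathbf{0}$), so each $\mathbf{x}_i$ admits an index $j$ with $\mathbf{x}_i+\mathbf{x}_j=\negf$ and Lemma \ref{ordered} forces $\mathbf{x}_i+\mathbf{x}_{t-i}=\negf$, giving vi); and vi) $\Rightarrow$ vii) is immediate. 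Separately, iv) $\Leftrightarrow$ v) is routine: an element of $\operatorname{Z}_S\setminus S$ is either a gap or has a negative coordinate, and in the latter case $\negf-\negh\not\leq\negf$ together with the Frobenius property forces $\negf-\negh\in S$, so iv) reduces to v).

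It then remains to run the cycle i) $\Rightarrow$ v) $\Rightarrow$ ii) $\Leftrightarrow$ iii) $\Rightarrow$ i), and to attach $(\ast)$ by proving iii) $\Rightarrow (\ast) \Rightarrow$ ii). For i) $\Rightarrow$ v) I would use Remark \ref{remarkbijection}, which rephrases almost-symmetry as ``$\negf-\negh\in S$ for all $\negh\in\operatorname{H}(S)\setminus(\operatorname{PF}(S)\setminus\{\negf\})$''; applying this to $\negh$ directly, and to $\negf-\negh$ when $\negh\in\operatorname{PF}(S)\setminus\{\negf\}$, yields v). The step v) $\Rightarrow$ ii) is a single application of v) to $\negf-\negh$ for $\negh\in\operatorname{L}(S)$, which returns $\negh\in\operatorname{PF}(S)$. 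For iii) $\Rightarrow$ i) I would again use Remark \ref{remarkbijection}: if $\negh\in\operatorname{H}(S)\setminus(\operatorname{PF}(S)\setminus\{\negf\})$ with $\negh\neq\negf$ (the case $\negh=\negf$ being trivial), then $\negh\notin\operatorname{L}(S)$ by iii), whence $\negf-\negh\notin\operatorname{H}(S)$ and therefore $\negf-\negh\in S$. Finally iii) $\Rightarrow (\ast)$ records that $\mathbf{x}\mapsto\negf-\mathbf{x}$ maps $\operatorname{L}(S)=\operatorname{PF}(S)\setminus\{\negf\}$ into itself.

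I expect the delicate point to be $(\ast)\Rightarrow$ ii), that is, recovering $\operatorname{L}(S)\subseteq\operatorname{PF}(S)$ from the mere self-symmetry of $\operatorname{PF}(S)$. My intended argument is by contradiction through the $\leq_S$-maximality of pseudo-Frobenius elements (Proposition \ref{pseudofrobeniusmaximal}): suppose $\negh\in\operatorname{L}(S)\setminus\operatorname{PF}(S)$ and choose $\mathbf{p}\in\operatorname{PF}(S)$ with $\mathbf{p}-\negh\in S^*$, say $\mathbf{p}=\negh+\negs$. Then $\mathbf{p}\neq\negf$, for $\mathbf{p}=\negf$ would give $\negf-\negh=\negs\in S$, contradicting $\negh\in\operatorname{L}(S)$; hence $(\ast)$ applies and $\negf-\mathbf{p}\in\operatorname{PF}(S)$. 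But $(\negf-\mathbf{p})+\negs=\negf-\negh\in\operatorname{H}(S)$, so $\negf-\mathbf{p}$ sits strictly below another gap in $\leq_S$, contradicting its maximality. This closes every loop; the remaining bookkeeping (checking that all subtractions land in $\NN_0^d$, which holds because every gap is $\leq\negf$) I would handle inline.
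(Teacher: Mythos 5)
Your proposal is correct, and every step checks out: the unconditional facts ($\negf\in\operatorname{PF}(S)$ and $\operatorname{PF}(S)\setminus\{\negf\}\subseteq\operatorname{L}(S)$ for any Frobenius GNS), the reduction iv) $\Leftrightarrow$ v) via the negative-coordinate case, the hub condition $(\ast)$ linked to vi) and vii) through Lemma \ref{ordered}, the cycle i) $\Rightarrow$ v) $\Rightarrow$ ii) $\Leftrightarrow$ iii) $\Rightarrow$ i) via Remark \ref{remarkbijection}, and the contradiction argument for $(\ast)\Rightarrow$ ii). However, the architecture is genuinely different from the paper's. The paper runs a single linear cycle i) $\Rightarrow$ ii) $\Rightarrow$ iii) $\Rightarrow$ iv) $\Rightarrow$ v) $\Rightarrow$ vi) $\Rightarrow$ vii) $\Rightarrow$ i), using Remark \ref{remarkbijection} at the two ends (for i) $\Rightarrow$ ii) and vii) $\Rightarrow$ i)) and Lemma \ref{ordered} in the middle (for v) $\Rightarrow$ vi)); in particular iii) $\Rightarrow$ iv) is handled by a direct case analysis on $\operatorname{Z}_S\setminus S$, and the content of your unconditional inclusion $\operatorname{PF}(S)\setminus\{\negf\}\subseteq\operatorname{L}(S)$ appears only inside the conditional step ii) $\Rightarrow$ iii). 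Your decomposition buys modularity and conceptual clarity: it makes explicit which equivalences are formal consequences of the Frobenius property alone (ii) $\Leftrightarrow$ iii), iv) $\Leftrightarrow$ v)) versus which genuinely require almost-symmetry, and it shows that the order-theoretic conditions vi), vii) are just reformulations of the symmetry $(\ast)$ of $\operatorname{PF}(S)$. The price is a slightly larger web of implications, and your delicate step $(\ast)\Rightarrow$ ii) — deriving $\operatorname{L}(S)\subseteq\operatorname{PF}(S)$ by contradiction from $\leq_S$-maximality — is an argument with no direct counterpart in the paper, though its mechanism (writing $\negf-\negh=(\negf-\mathbf{p})+\negs$ and invoking the pseudo-Frobenius property) closely mirrors the paper's closing step vii) $\Rightarrow$ i), where the same decomposition $\negf-\negh=\mathbf{x}_{t-i}+(\mathbf{x}_i-\negh)$ is used positively to land in $S$ rather than negatively to produce a contradiction.
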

\begin{proof}
i) $\Rightarrow$ ii): Let $\negh\in \operatorname{L}(S)\backslash \operatorname{PF}(S)$. Since the map $\psi_\negf$ (cf. Definition \ref{map}) is a bijection by Remark \ref{remarkbijection}, we obtain that $\negf-\negh\in S$, contradicting $\negh\in \operatorname{L}(S)$.
\\ ii) $\Rightarrow$ iii): As $\operatorname{L}(S)\cup \{\negf\}\subseteq \operatorname{PF}(S)$, let $\negh\in \operatorname{PF}(S)\setminus (\operatorname{L}(S)\cup \{\negf\})$. Hence, $\negf-\negh\in S^*$ and consequently, $\negf=\negh+(\negf-\negh)\in S$, which is a contradiction.
\\ iii) $\Rightarrow$ iv): For $\mathbf{h}\in \operatorname{Z}_{S}\setminus \, S$, as $\operatorname{Z}_{S}\setminus \, S=\operatorname{H}(S)\cup (\{\negx\in \mathbb{Z}^d\mid \negx\leq \negf\}\setminus  \mathbb{N}_0^d)$, in every case it holds that $\negh\leq \negf$ and we thus have $\mathbf{f}-\mathbf{h}\in \NN_0^{d}$. Suppose that $\mathbf{f}-\mathbf{h}\notin \operatorname{PF}(S)$, otherwise we are done. We obtain by iii) that $\mathbf{f}-\mathbf{h}\notin \operatorname{L}(S)$ and thus we have two possibilities. First, if $\mathbf{f}-\mathbf{h}\notin \operatorname{H}(S)$ then $\mathbf{f}-\mathbf{h}\in S$ (since $\mathbf{f}-\mathbf{h}\in \NN_0^{d}$) and we are done. Second, if $\mathbf{f}-\mathbf{h}\in \operatorname{H}(S)$ then $\mathbf{f}\geq \mathbf{f}-\mathbf{h}$ and in particular $\mathbf{h}=\mathbf{f}-(\mathbf{f}-\mathbf{h})\in \NN_0^{d}$. Since $\mathbf{h}\notin S$, we get $\mathbf{h}\in \operatorname{H}(S)$. So we obtain $\mathbf{f}-\mathbf{h}\in \operatorname{H}(S)$ and $\mathbf{f}-(\mathbf{f}-\mathbf{h})=\mathbf{h}\in \operatorname{H}(S)$, that is, $\mathbf{f}-\mathbf{h}\in \operatorname{L}(S)$, contrary to our assumption.
\\ iv) $\Rightarrow$ v): This is an immediate consequence since $\operatorname{H}(S)\subset \operatorname{Z}_{S}\setminus \, S$.
\\ v) $\Rightarrow$ vi): Let $\negx_{i}\in \operatorname{PF}(S)\setminus \{\negf\}$. Notice that $\negf-\negx_i \not\in S$ since otherwise, as $\negx_i\in \operatorname{PF}(S)$, we would have $\negf\in S$. Hence, it follows from v) that $\negf-\negx_i\in \operatorname{PF}(S)\setminus \{\negf\}$, and consequently $\negx_i+\negx_j=\negf$ for some $j\in \{1,\ldots,t-1\}$. The claim thus follows by Lemma~\ref{ordered}. 
\\ vi) $\Rightarrow$ vii): This implication is trivial.
\\ vii) $\Rightarrow$ i): By Remark \ref{remarkbijection}, it suffices to prove that the map $\psi_\negf$ (cf. Definition \ref{map}) is bijective, that is, $\mathbf{h}\in  \operatorname{H}(S)\backslash \operatorname{PF}(S)$ implies $\mathbf{f}-\mathbf{h}\in S$. For $\mathbf{h}\in  \operatorname{H}(S)\backslash \operatorname{PF}(S)$, it follows from Proposition \ref{pseudofrobeniusmaximal} that $\mathbf{h}\leq_S \mathbf{g}$ for some $\mathbf{g}\in \operatorname{PF}(S)$. If $\mathbf{g}=\mathbf{f}$ then we are done because it gives $\mathbf{f}-\mathbf{h}\in S$. Otherwise, we have $\mathbf{h}\leq_S\mathbf{x}_i$ for some $i\in \{1,\ldots,t-1\}$, and thus $(\mathbf{f}-\mathbf{x}_{t-i})-\mathbf{h}=\mathbf{x}_i-\mathbf{h}\in S^*$ by vii). Hence $\mathbf{f}-\mathbf{h}=\mathbf{x}_{t-i}+((\mathbf{f}-\mathbf{x}_{t-i})-\mathbf{h})\in S$.
\end{proof}

As a consequence, we obtain the following description for pseudo-symmetric GNSs.
\begin{corollary}
Let $S\subseteq \NN_0^d$ be a GNS. Then $S$ is pseudo-symmetric if and only if $S$ is almost symmetric and $\operatorname{t}(S)=2$.
\end{corollary}
\begin{proof}
If $S$ is pseudo-symmetric it is easy to see that $S$ has type 2 and it is almost symmetric. Conversely, assume that $S$ is almost symmetric with Frobenius element $\negf$ and type 2. Then $\operatorname{PF}(S)\setminus \{\negf\}=\{\negx\}$ and by vi) of Proposition~\ref{bigprop} we have $2\negx=\negf$. So $\operatorname{PF}(S)=\{\negf,\negf/2\}$, that is, $S$ is pseudo-symmetric.
\end{proof}

\begin{remark}
Let $S$ be a submonoid of $\NN_0^d$. We can define a \emph{relative ideal} of $S$ a subset $I$ of $\mathbb{Z}^d$ such that $I+S\subseteq I$ and $s+I\subseteq S$ for some $s\in S$, generalizing well-known concepts as expressed in \cite{Barucci} and \cite{symmetries}. For an almost symmetric numerical semigroup $S$ (that is, when $d=1$), $\operatorname{Z}_S=\mathbb{Z}$ and the condition number iv) in Proposition \ref{bigprop} can be rewritten as: if $\mathbf{h}\in \mathbb{Z}\setminus S$ then either $\mathbf{f}-\mathbf{h}\in S$ or $\mathbf{f}-\mathbf{h}\in \operatorname{PF}(S)$, where $\mathbf{f}$ is the Frobenius number. The natural generalization of this condition to a Frobenius GNS $(S, \mathbf{f})$ in $\NN_0^d$ with $d\geq 1$ thus becomes: if $\mathbf{h}\in \mathbb{Z}^{d}\setminus S$ then either $\mathbf{f}-\mathbf{h}\in S$ or $\mathbf{f}-\mathbf{h}\in \operatorname{PF}(S)$.
However, this generalized formulation does not hold for almost symmetric GNS $S\subseteq \NN_0^d$ when $d>1$ as we can see in the next example.
This behavior can be related to the fact that the set $\operatorname{K}_{S}=\{\mathbf{f}-\mathbf{h}\mid \mathbf{h}\in \mathbb{Z}^{d}\setminus S\}$ is not a relative ideal, in the sense that it is not true that $\mathbf{s}+\operatorname{K}_{S}\subseteq S$ for some $\mathbf{s}\in S$. Notice nevertheless that it works if we consider $\overline{\operatorname{K}}_{S}=\{\mathbf{f}-\mathbf{h}\mid \mathbf{h}\in \operatorname{Z}_{S}\setminus S\}$ instead of $\operatorname{K}_{S}$.
\end{remark}

\begin{example}
Let $S=\NN_0^{2}\setminus \{(0,1),(0,2)\}$. Since $S$ is pseudo-symmetric, it is almost symmetric and has Frobenius element $\mathbf{f}=(0,2)$. For all $n\in \NN_0$, consider that  $(-n,3)\notin \operatorname{Z}_{S}$, that is $\mathbf{f}-(-n,3)=(n,-1)\notin \NN_0^{2}$, so it does not belong to $S$ and $\operatorname{PF}(S)$. Moreover $\operatorname{K}_S$ is not a relative ideal of $S$. If it is true that $\mathbf{s}+\operatorname{K}_{S}\subseteq S$ for some $\mathbf{s}=(s_{1},s_{2})\in S$, taking $\mathbf{t}=\mathbf{f}-\mathbf{h}\in \operatorname{K}_{S}$ with $\mathbf{h}=(h_{1},h_{2})$ such that $h_{1}<0$ and $h_{2}>s_{2}+2$, we have $\mathbf{s}+\mathbf{t}=(s_{1}-h_{1},s_{2}+2-h_{2})\notin \NN_0^{2}$, and then it does not belong to $S$. An example of element belonging to $\operatorname{Z}_S$ is $(-n,1)$ with $n\in \NN_0^d$.
\end{example}

\subsection{Apéry and related sets}

Ap\'ery sets are involved with the topic of almost-symmetry in numerical semigroups through the characterizations given in \cite{symmetries}. In what follows, after recalling some useful tool, we introduce a special subset of Ap\'ery sets that preserves some nice properties of Ap\'ery sets from the numerical semigroup case and allows to yield a generalization of that characterizations in the context of GNSs. We start by bringing the definition of Ap\'ery set for submonoids of $\NN_0^d$.

\begin{definition} Let $S\subseteq \NN_0^{d}$ be a monoid and $\negn\in S$. The \emph{Ap\'ery set} of $S$ with respect to $\negn$ is the set
$$ \operatorname{Ap}(S,\negn)=\{\mathbf{s}\in S  \mid  \mathbf{s}-\negn\notin S\},$$
where $\mathbf{s}-\negn$ stands for the usual difference in $\mathbb{Z}^{d}$. \end{definition}

In the GNS setting, pseudo-Frobenius elements are related to some elements of an Ap\'ery set through the following result:

\begin{proposition}[\cite{irreducible}, Proposition 1.4] Let $S\subseteq \NN_0^{d}$ be a GNS and $\negn\in S^*$. Then
$$\operatorname{PF}(S)=\{\mathbf{w}-\negn\mid \mathbf{w}\in \mathrm{Maximals}_{\leq_{S}}\operatorname{Ap}(S,\negn)\}.$$
\label{pseudoApery}
 \end{proposition}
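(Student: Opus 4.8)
The plan is to prove the set equality by establishing the two inclusions separately, in each case passing between an element $\mathbf{w}$ of the Apéry set and the candidate pseudo-Frobenius element $\mathbf{w}-\negn$, and arguing directly from the definition $\operatorname{PF}(S)=\{\negh\in\operatorname{H}(S)\mid \negh+\negs\in S \text{ for all }\negs\in S^*\}$. First I would treat the inclusion $\operatorname{PF}(S)\subseteq\{\mathbf{w}-\negn\mid \mathbf{w}\in \mathrm{Maximals}_{\leq_{S}}\operatorname{Ap}(S,\negn)\}$. Given $\negh\in\operatorname{PF}(S)$, I set $\mathbf{w}=\negh+\negn$ and check that $\mathbf{w}\in \mathrm{Maximals}_{\leq_{S}}\operatorname{Ap}(S,\negn)$. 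Membership in $S$ is immediate, since $\negn\in S^*$ forces $\negh+\negn\in S$ by the defining property of $\operatorname{PF}(S)$; and $\mathbf{w}-\negn=\negh\notin S$ gives $\mathbf{w}\in\operatorname{Ap}(S,\negn)$. For maximality, if there were $\mathbf{w}'\in\operatorname{Ap}(S,\negn)$ with $\mathbf{w}\leq_S\mathbf{w}'$ and $\mathbf{w}\neq\mathbf{w}'$, then $\negs:=\mathbf{w}'-\mathbf{w}\in S^*$ and $\mathbf{w}'-\negn=\negh+\negs\in S$ again by the pseudo-Frobenius property, contradicting $\mathbf{w}'\in\operatorname{Ap}(S,\negn)$.

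Conversely, for the reverse inclusion I would take $\mathbf{w}\in \mathrm{Maximals}_{\leq_{S}}\operatorname{Ap}(S,\negn)$, set $\negh=\mathbf{w}-\negn$, and verify $\negh\in\operatorname{PF}(S)$. By the definition of the Apéry set, $\negh\notin S$. To obtain the pseudo-Frobenius property, I fix $\negs\in S^*$; then $\mathbf{w}+\negs\in S$, and if $\negh+\negs=(\mathbf{w}+\negs)-\negn$ failed to lie in $S$, then $\mathbf{w}+\negs\in\operatorname{Ap}(S,\negn)$ would satisfy $\mathbf{w}\leq_S\mathbf{w}+\negs$ with $\mathbf{w}\neq\mathbf{w}+\negs$ (because $\negs\in S^*$), contradicting the maximality of $\mathbf{w}$. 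Hence $\negh+\negs\in S$ for every $\negs\in S^*$.

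The step I expect to be the main obstacle is showing that $\negh=\mathbf{w}-\negn$ actually lies in $\operatorname{H}(S)$, that is, that $\negh\in\NN_0^d$ and not merely in $\mathbb{Z}^d\setminus S$; this is precisely where the finiteness of the complement of a GNS must enter, the previous steps being purely formal manipulations of the partial order $\leq_S$. I would argue by contradiction: suppose $h^{(j)}<0$ for some coordinate $j$. Since $\operatorname{H}(S)$ is finite, the infinite set $\{c\,\negei\mid c\in\NN_0\}$ meets it only finitely often for any $i$, so for a fixed $i\neq j$ and $c$ large enough one has $c\,\negei\in S^*$. Taking $\negs=c\,\negei$, the $j$-th component of $\negh+\negs$ equals $h^{(j)}<0$, so $\negh+\negs\notin\NN_0^d$, hence $\negh+\negs\notin S$, contradicting the pseudo-Frobenius property just established. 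Thus every coordinate of $\negh$ is nonnegative, giving $\negh\in\operatorname{H}(S)$ and therefore $\negh\in\operatorname{PF}(S)$; combining the two inclusions yields the claimed equality. I would remark that for $d=1$ this positivity is the classical fact that the pseudo-Frobenius elements of a numerical semigroup with nonempty gap set are nonnegative, so that the only genuinely delicate point is the reduction that the GNS hypothesis supplies.
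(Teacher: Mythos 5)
Your proof is correct, and there is in fact nothing in the paper to compare it against: Proposition~\ref{pseudoApery} is quoted from the cited reference as a preliminary and the paper supplies no proof of it, so your argument serves as a self-contained substitute for an omitted one. Your two inclusions are the standard ones (passing between $\negh$ and $\negh+\negn$, and exchanging $\leq_S$-maximality against the pseudo-Frobenius property via the identity $(\mathbf{w}+\negs)-\negn=(\mathbf{w}-\negn)+\negs$), and you correctly isolate the only non-formal point: that a $\leq_S$-maximal element $\mathbf{w}$ of $\operatorname{Ap}(S,\negn)$ satisfies $\mathbf{w}-\negn\in\NN_0^d$, which is exactly where finiteness of $\operatorname{H}(S)$ must be used. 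Your coordinate argument with $\negs=c\,\negei$, $i\neq j$, settles this completely for $d\geq 2$. Two small caveats. First, for $d=1$ your appeal to ``the classical fact'' is circular as phrased: pseudo-Frobenius elements are nonnegative by definition, whereas what must be shown is that the integer $h=w-n$, known only to satisfy $h\notin S$ and $h+s\in S$ for all $s\in S^*$, is nonnegative. The missing line is short: if $\operatorname{H}(S)\neq\emptyset$, let $F=\max\operatorname{H}(S)$; if $h<0$ then $s:=F-h>F$ lies in $S^*$ and $h+s=F\notin S$, contradicting the property you just established. Second, some nonemptiness hypothesis is genuinely needed in dimension one: for $S=\NN_0$ one has $\operatorname{PF}(S)=\emptyset$ while the unique $\leq_S$-maximal element of $\operatorname{Ap}(S,n)$ is $n-1$, so the displayed equality fails; the statement implicitly assumes $S\neq\NN_0$ (for $d\geq 2$ the case $S=\NN_0^d$ is harmless, since then $\operatorname{Ap}(S,\negn)$ has no $\leq_S$-maximal elements and both sides are empty). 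Neither caveat reflects an error in your reasoning; the first is a one-line gap and the second is a defect of the statement as quoted.
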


\begin{remark} Let $S\subseteq \NN_0^{d}$ be a monoid and $\mathbf{s}_{1},\mathbf{s}_{2}\in S$ with $\mathbf{s}_{1}+\mathbf{s}_{2} \in \operatorname{Ap}(S,\negn)$. Then $\mathbf{s}_{1},\mathbf{s}_{2} \in \operatorname{Ap}(S,\negn)$. In fact if $\mathbf{s}_{1}-\negn\in S$, then $\mathbf{s}_{1}+\mathbf{s}_{2}-\negn \in S$, that is a contradiction. \label{sumAp}\end{remark}

The main difference between the Ap\'ery set of a submonoid of $\NN_0$ and a submonoid of $\NN_0^d$ with $d>1$ is that in second case the Ap\'ery set can contain infinitely many elements. In particular, it is easy to see that for every GNS in $\NN_0^d$ with $d>1$ (and $S\neq \NN_0^d$) then $\operatorname{Ap}(S,\negn)$ contains infinitely many elements for every $\negn\in S^*$. 

\begin{example} 
Let $S=\mathbb{N}^{2}\setminus\{(1,0),(1,1)\}$. The Ap\'ery set of $S$ with respect to $(0,1)\in S$ is $\operatorname{Ap}(S,(0,1))=\{(0,0),(1,2),(n,0) |\ n\geq 2\}$.
\end{example}

For our arguments, we focus the attention on a particular finite subset of the Ap\'ery set, introduced for the first time in \cite{phdthesis}.

\begin{definition} Let $S\subseteq \NN_0^d$ be a GNS and $\negn\in S$. We define the \emph{reduced Ap\'ery set} of $S$ with respect to $\negn$ as
$$\operatorname{C}(S,\negn)=\left\lbrace \mathbf{s}\in \operatorname{Ap}(S,\negn)\mid \mathbf{s}\leq \mathbf{h}+\negn \ \mbox{for some}\ \mathbf{h}\in \operatorname{H}(S)\right\rbrace.$$
\end{definition}

Observe that for numerical semigroups $\operatorname{C}(S,\negn)=\operatorname{Ap}(S,\negn)$. Some useful relations from $\operatorname{C}(S,\negn)$ and $\operatorname{Ap}(S,\negn)$ are given in the following result.

\begin{proposition}
Let $S\subseteq \NN_0^{d}$ be a GNS and $\negn\in S$. The following assertions are verified:
\begin{itemize}
\item[1)] $\mathrm{Maximals}_{\leq}\operatorname{Ap}(S,\negn)=\mathrm{Maximals}_{\leq}\operatorname{C}(S,\negn)=\{\mathbf{h}+\negn\mid \mathbf{h}\in \mathrm{Maximals}_{\leq}\operatorname{H}(S)\}$;
\item[2)] $\mathrm{Maximals}_{\leq_{S}}\operatorname{Ap}(S,\negn)\subseteq\mathrm{Maximals}_{\leq_{S}}\operatorname{C}(S,\negn)$. 
\end{itemize}
\label{maximals}
\end{proposition}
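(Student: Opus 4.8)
The plan is to prove the two displayed equalities in item 1) by sandwiching both maximal sets between the explicit set $T:=\{\mathbf{h}+\negn\mid \mathbf{h}\in \mathrm{Maximals}_{\leq}\operatorname{H}(S)\}$ (the third set in the statement), and to read off item 2) almost immediately from Proposition \ref{pseudoApery}. Throughout I may assume $\negn\in S^*$ and $\operatorname{H}(S)\neq\emptyset$, the remaining cases being degenerate. Two elementary facts about a $\leq$-maximal gap $\mathbf{h}$ drive everything. First, since $\mathbf{h}+\negn>\mathbf{h}$ and $\mathbf{h}$ is maximal in $\operatorname{H}(S)$, the element $\mathbf{h}+\negn$ cannot be a gap, so $\mathbf{h}+\negn\in S$; as $\mathbf{h}\notin S$ this gives $\mathbf{h}+\negn\in \operatorname{Ap}(S,\negn)$, and $\mathbf{h}+\negn\leq \mathbf{h}+\negn$ places it in $\operatorname{C}(S,\negn)$. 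Second, $\mathbf{h}+\negn$ is $\leq$-maximal in $\operatorname{Ap}(S,\negn)$: if $\mathbf{t}\in\operatorname{Ap}(S,\negn)$ satisfies $\mathbf{t}\geq \mathbf{h}+\negn$, then $\mathbf{t}-\negn\geq\mathbf{h}\geq\mathbf{0}$ lies in $\NN_0^d$ and is not in $S$, hence is a gap $\geq\mathbf{h}$; maximality of $\mathbf{h}$ forces $\mathbf{t}-\negn=\mathbf{h}$. Since $T\subseteq \operatorname{C}(S,\negn)\subseteq\operatorname{Ap}(S,\negn)$, these two facts yield $T\subseteq \mathrm{Maximals}_{\leq}\operatorname{Ap}(S,\negn)$ and $T\subseteq \mathrm{Maximals}_{\leq}\operatorname{C}(S,\negn)$.

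For the reverse inclusion on the reduced Apéry set I would use that $\operatorname{C}(S,\negn)$ is finite, its elements being bounded above by the finitely many $\mathbf{h}+\negn$ with $\mathbf{h}\in\operatorname{H}(S)$. Given $\mathbf{x}\in \mathrm{Maximals}_{\leq}\operatorname{C}(S,\negn)$, I pick a gap $\mathbf{h}_0$ with $\mathbf{x}\leq \mathbf{h}_0+\negn$ and then a $\leq$-maximal gap $\mathbf{h}_1\geq \mathbf{h}_0$; the first fact above gives $\mathbf{h}_1+\negn\in\operatorname{C}(S,\negn)$ with $\mathbf{x}\leq \mathbf{h}_1+\negn$, so maximality forces $\mathbf{x}=\mathbf{h}_1+\negn\in T$. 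Hence $\mathrm{Maximals}_{\leq}\operatorname{C}(S,\negn)=T$.

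The essential point is the remaining inclusion $\mathrm{Maximals}_{\leq}\operatorname{Ap}(S,\negn)\subseteq T$, where the difficulty is exactly that $\operatorname{Ap}(S,\negn)$ is infinite, so one cannot argue by mere finiteness. Let $\mathbf{s}$ be $\leq$-maximal in $\operatorname{Ap}(S,\negn)$; I must show $\mathbf{s}-\negn\in\NN_0^d$, i.e. rule out that some coordinate satisfies $s^{(j)}<n^{(j)}$, and this is the main obstacle. The trick I would use is to exploit the finiteness of $\operatorname{H}(S)$ in a coordinate transverse to the deficient one: if $s^{(j)}<n^{(j)}$, choose $k\neq j$ (such a $k$ exists since $d\geq 2$; for $d=1$ the assertion is the classical numerical-semigroup fact, where $\operatorname{C}(S,\negn)=\operatorname{Ap}(S,\negn)$) and consider $\mathbf{s}+m\mathbf{e}_k$ for large $m$. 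As only finitely many points of $\NN_0^d$ are gaps, $\mathbf{s}+m\mathbf{e}_k\in S$ once $m$ is large enough; and because its $j$-th coordinate is still $s^{(j)}<n^{(j)}$, the difference $\mathbf{s}+m\mathbf{e}_k-\negn\notin\NN_0^d$ and so is not in $S$. Thus $\mathbf{s}+m\mathbf{e}_k\in\operatorname{Ap}(S,\negn)$ strictly dominates $\mathbf{s}$, contradicting maximality. Therefore $\mathbf{s}-\negn\in\operatorname{H}(S)$; and were it not a maximal gap, I could take a maximal gap $\mathbf{h}''>\mathbf{s}-\negn$ and use the first fact to produce $\mathbf{h}''+\negn\in\operatorname{Ap}(S,\negn)$ strictly above $\mathbf{s}$, again a contradiction. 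Hence $\mathbf{s}=(\mathbf{s}-\negn)+\negn\in T$. Combining with the first paragraph gives $\mathrm{Maximals}_{\leq}\operatorname{Ap}(S,\negn)=T=\mathrm{Maximals}_{\leq}\operatorname{C}(S,\negn)$, proving 1).

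Finally, item 2) follows quickly. If $\mathbf{w}\in \mathrm{Maximals}_{\leq_S}\operatorname{Ap}(S,\negn)$, then Proposition \ref{pseudoApery} gives $\mathbf{w}-\negn\in \operatorname{PF}(S)\subseteq\operatorname{H}(S)$; in particular $\mathbf{w}-\negn\in\NN_0^d$ is a gap, so $\mathbf{w}=(\mathbf{w}-\negn)+\negn\in\operatorname{C}(S,\negn)$. Since $\operatorname{C}(S,\negn)\subseteq\operatorname{Ap}(S,\negn)$, any $\mathbf{c}\in\operatorname{C}(S,\negn)$ with $\mathbf{w}\leq_S\mathbf{c}$ and $\mathbf{c}\neq\mathbf{w}$ would already violate the $\leq_S$-maximality of $\mathbf{w}$ in the larger set $\operatorname{Ap}(S,\negn)$; hence $\mathbf{w}$ is $\leq_S$-maximal in $\operatorname{C}(S,\negn)$, which is the desired inclusion.
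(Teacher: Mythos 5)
Your proof is correct and takes essentially the same approach as the paper's: the key step ruling out $\mathbf{s}-\mathbf{n}\notin\mathbb{N}_0^d$ by adding a large multiple of $\mathbf{e}_k$ in a coordinate transverse to the deficient one (exploiting finiteness of $\operatorname{H}(S)$) is exactly the paper's argument, and item 2) is handled identically via Proposition \ref{pseudoApery}. Your write-up is only marginally more explicit, e.g.\ in verifying that each $\mathbf{h}+\mathbf{n}$ with $\mathbf{h}$ a maximal gap is itself maximal in $\operatorname{Ap}(S,\mathbf{n})$, a point the paper leaves implicit.
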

\begin{proof} Let $\nege_1,\ldots,\nege_d$ be the standard basis vectors of $\NN_0^d$.\\
1) Let $\mathbf{a}\in S$ be maximal in $\operatorname{Ap}(S,\mathbf{n})$ with respect to $\leq$. Then $\mathbf{a}-\mathbf{n}\notin S$. If $\mathbf{a}-\mathbf{n}\notin \NN_0^d$ then there exists $i\in \{1,2,\ldots,d\}$ such that it is possible to write $\mathbf{a}=\mathbf{a}'+a^{(i)}\mathbf{e}_{i}$ and $\mathbf{n}=\mathbf{n}'+n^{(i)}\mathbf{e}_{i}$, where $\mathbf{a}',\mathbf{n}'\in \mathbb{N}_0^{d}$ whose $i$-th component is zero, and $a^{(i)}<n^{(i)}$. Since $\operatorname{H}(S)$ is finite, the components of all element in $\operatorname{H}(S)$ are bounded, therefore there exists $t \in \NN_0$ such that $\negx=\mathbf{a}+t\mathbf{e}_j\in S$, with $j\neq i$. In particular $\mathbf{a}\leq \negx$ and $\negx-\negn \notin S$, which contradicts the maximality of $\mathbf{a}$. So $\mathbf{a}-\mathbf{n}\in \NN_0^d$, that is, there exists $\mathbf{h}$ maximal in $\operatorname{H}(S)$ with respect to $\leq$, such that $\mathbf{a}-\mathbf{n}\leq \mathbf{h}$. Then $\mathbf{a}\leq \mathbf{h}+\mathbf{n}$ and $\mathbf{h}+\mathbf{n}\in \operatorname{Ap}(S,\mathbf{n})$. By maximality of $\mathbf{a}$ it is verified that $\mathbf{a}=\mathbf{h}+\mathbf{n}$. It follows by definition that $\operatorname{C}(S,\mathbf{n})$ has the same maximal elements.\\
2) Let $\mathbf{a}$ be maximal in $\operatorname{Ap}(S,\mathbf{n})$ with respect to $\leq_{S}$. Then $\negh=\nega -\negn\in \operatorname{PF}(S)$, by Proposition~\ref{pseudoApery}. In particular $\nega=\negh+\negn$, that is $\nega \in \operatorname{C}(S,\negn)$. If there exists $\mathbf{s}\in \operatorname{C}(S,\mathbf{n})$ such that $\mathbf{a}\leq_{S}\mathbf{s}$ we obtain a contradiction with the maximality of $\mathbf{a}$ in $\operatorname{Ap}(S,\mathbf{n})$, since $\mathbf{s}\in \operatorname{Ap}(S,\mathbf{n})$. 
\end{proof}

In general it is verified that
$\mathrm{Maximals}_{\leq_{S}}\operatorname{Ap}(S,\mathbf{n})\subsetneq\mathrm{Maximals}_{\leq_{S}}\operatorname{C}(S,\mathbf{n})$, as we can see in the following example:\ 

\begin{example}
 
Let $S=\mathbb{N}_0^{2}\setminus \{(0,1),(0,3),(1,0),(1,1),(1,3),(2,1),(2,0),(3,0)\}$. Let us compute the sets $\operatorname{Ap}(S,\negn)$ and $\operatorname{C}(S,\negn)$, for $\negn=(4,0)$.

\vspace{3pt}
$\operatorname{Ap}(S,(4,0))=\{(0,0),(0,2),(0,n),(1,2),(1,n),(2,2),(2,3),(2,n),(3,1),(3,2),(3,3),(3,n),\newline(4,1),(4,3),(5,0),(5,1),(5,3),(6,1),(6,0),(7,0)\mid n\geq 4\}$

\vspace{3pt}
$\operatorname{C}(S,(4,0))=\{(0,0),(0,2),(1,2),(2,2),(2,3),(3,1),(3,2),(3,3),(4,1),(4,3),(5,0),(5,1),\newline (5,3),(6,1),(6,0),(7,0)\}$

\vspace{3pt}
We can see that:
\begin{itemize}
\item Maximals$_{\leq}\operatorname{Ap}(S,\negn)=\{(7,0),(6,1),(5,3)\}$
\item Maximals$_{\leq}\operatorname{C}(S,\negn)=\{(7,0),(6,1),(5,3)\}$ 
\item Maximals$_{\leq_{S}}\operatorname{Ap}(S,\negn)=\{(4,3),(5,0),(5,3),(6,1),(6,0),(7,0)\}$
\item Maximals$_{\leq_{S}}\operatorname{C}(S,\negn)=\{(4,3),(5,0),(5,3),(6,1),(6,0),(7,0),(3,3),(3,2),(2,3)\}$

\end{itemize} 

\noindent Observe that $(3,3)$ is not maximal in $\operatorname{Ap}(S,(4,0))$ with respect to $\leq_{S}$, since $(3,5)-(3,3)=(0,2)\in S$ and $(3,5) \in \operatorname{Ap}(S,(4,0))$. But $(3,3)\in\mathrm{Maximals}_{\leq_{S}}\operatorname{C}(S,\mathbf{n})$.

\label{esempioC}
\end{example}


\begin{theorem}
Let $S\subseteq \NN_0^{d}$ be a GNS, $\negn\in S$ and let $\prec$ be a monomial order in $\NN_0^{d}$. Then $S$ is almost symmetric with type $t$ if and only if  it is possible to arrange the set $\operatorname{C}(S,\negn)$ in the following way
$$\operatorname{C}(S,\negn)=\{\mathbf{a}_{0}=\mathbf{0}\prec\mathbf{a}_{1}\prec \cdots \prec \mathbf{a}_{m}\}\cup \{\mathbf{b}_{1}\prec \cdots \prec\mathbf{b}_{t-1} \},$$ 
with the following conditions:
\begin{enumerate}[\rm 1.]
\item $\negb_{t-1}\prec \nega_m$.
\item $\negb_i-\negn \in \mathbb{N}_0^d$ for every $i\in \{1,\ldots,t-1\}$.
\item $\negb_i \nleq_{S} \negb_j$ for every $i\neq j$.
\item $\mathbf{a}_{i}+\mathbf{a}_{m-i}=\mathbf{a}_{m}$ for all $i \in \{0,1,\ldots,m\}$.
\item $\mathbf{b}_{j}+\mathbf{b}_{t-j}=\mathbf{a}_{m}+\mathbf{n}$ for all $j \in \{1,\ldots,t-1\}$.
\end{enumerate}\label{AsymAp} \end{theorem}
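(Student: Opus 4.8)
The plan is to prove the two implications separately, leaning on the pseudo-Frobenius characterizations of almost-symmetry collected in Proposition~\ref{bigprop} and on the comparison between $\operatorname{C}(S,\negn)$ and $\operatorname{Ap}(S,\negn)$ in Proposition~\ref{maximals}. Throughout I take $\negn\in S^{*}$, the case $\negn=\mathbf{0}$ being degenerate since then $\operatorname{C}(S,\negn)=\emptyset$. The guiding observation is that, once $S$ is known to be a Frobenius GNS with Frobenius element $\negf$, every gap is $\le\negf$, so $\operatorname{C}(S,\negn)=\{\mathbf{s}\in\operatorname{Ap}(S,\negn)\mid \mathbf{s}\le\negf+\negn\}$; hence $\negf+\negn$ is the unique $\le$-maximal element of $\operatorname{C}(S,\negn)$ and, as $\prec$ refines $\le$, its $\prec$-largest element. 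This element plays the role of $\mathbf{a}_{m}$, and the involution $\theta(\mathbf{s})=\negf+\negn-\mathbf{s}$ drives the symmetry conditions.

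\textbf{Forward direction.} If $S$ is almost symmetric of type $t$, it is Frobenius with element $\negf$ (Proposition~\ref{remarkFrobenius}), and by Proposition~\ref{bigprop} the set $\operatorname{PF}(S)\setminus\{\negf\}=\{\mathbf{x}_{1}\prec\cdots\prec\mathbf{x}_{t-1}\}$ satisfies $\mathbf{x}_{i}+\mathbf{x}_{t-i}=\negf$. I put $\mathbf{b}_{i}=\mathbf{x}_{i}+\negn$: these lie in $\operatorname{C}(S,\negn)$, are $\prec$-increasing by translation invariance, and satisfy conditions 2, 3 (pseudo-Frobenius elements are $\leq_{S}$-incomparable by Proposition~\ref{pseudofrobeniusmaximal}), and 5 (subtract $2\negn$ from the pairing, using $\mathbf{a}_{m}=\negf+\negn$). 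Declaring the $\mathbf{a}_{j}$ to be the remaining elements of $\operatorname{C}(S,\negn)$ listed in $\prec$-order, condition 1 holds because $\mathbf{a}_{m}=\negf+\negn$ is the $\prec$-maximum. Condition 4 is the substantive point: $\theta$ fixes $\mathbf{a}_{m}$ and reverses $\prec$ (monomial-order translation invariance), so once I show it permutes the $\mathbf{a}_{j}$, being an order-reversing involution of the chain $\mathbf{a}_{0}\prec\cdots\prec\mathbf{a}_{m}$ it must satisfy $\theta(\mathbf{a}_{i})=\mathbf{a}_{m-i}$, i.e. $\mathbf{a}_{i}+\mathbf{a}_{m-i}=\mathbf{a}_{m}$.

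The main obstacle is to prove that $\theta$ preserves the family $\{\mathbf{a}_{j}\}$. For such an element $\theta(\mathbf{a}_{j})\in\NN_0^{d}$ because $\mathbf{a}_{j}\le\negf+\negn$, and $\theta(\mathbf{a}_{j})-\negn=\negf-\mathbf{a}_{j}\notin S$ (otherwise $\negf\in S$); the delicate part is $\theta(\mathbf{a}_{j})\in S$. Arguing by contradiction, if $\theta(\mathbf{a}_{j})$ were a gap then Proposition~\ref{bigprop}(v) would place $\negf-\theta(\mathbf{a}_{j})=\mathbf{a}_{j}-\negn$ in $S\cup\operatorname{PF}(S)\subseteq\NN_0^{d}$; but $\mathbf{a}_{j}-\negn\notin S$ by the Apéry property, and $\mathbf{a}_{j}-\negn\in\operatorname{PF}(S)$ forces either $\mathbf{a}_{j}=\negf+\negn$ (so $\theta(\mathbf{a}_{j})=\mathbf{0}\in S$) or $\mathbf{a}_{j}$ to be some $\mathbf{b}_{i}$, both impossible. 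This disposes uniformly of the awkward case $\mathbf{a}_{j}-\negn\notin\NN_0^{d}$, where no projection to a gap is available. Finally $\theta(\mathbf{a}_{j})$ is not a $\mathbf{b}_{i}$, since that would give $\mathbf{a}_{j}=\negf-\mathbf{x}_{i}=\mathbf{x}_{t-i}\in\operatorname{H}(S)$, contradicting $\mathbf{a}_{j}\in S$; hence $\theta(\mathbf{a}_{j})$ is again some $\mathbf{a}$.

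\textbf{Converse.} Given the decomposition, $\mathbf{a}_{m}$ is $\prec$-maximal (condition 1), hence $\le$-maximal in $\operatorname{C}(S,\negn)$, so $\negf:=\mathbf{a}_{m}-\negn$ is a $\le$-maximal gap by Proposition~\ref{maximals}(1). If $\negh'\ne\negf$ were another maximal gap, then $\negh'+\negn$ would be a $\le$-maximal element of $\operatorname{C}(S,\negn)$, thus some $\mathbf{a}_{j}$ or $\mathbf{b}_{j}$; condition 4 gives $\mathbf{a}_{j}\le\mathbf{a}_{m}$, excluding the first, while in the second condition 5 yields $\mathbf{b}_{t-j}-\negn=\negf-\negh'$ and condition 2 forces $\negh'\le\negf$, against maximality. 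Hence $S$ is Frobenius with element $\negf$, and $\negf\in\operatorname{PF}(S)$ since a $\le$-maximal gap is $\leq_{S}$-maximal; with $\mathbf{x}_{i}:=\mathbf{b}_{i}-\negn\in\operatorname{H}(S)$ condition 5 gives $\mathbf{x}_{i}+\mathbf{x}_{t-i}=\negf$. It remains to prove $\operatorname{PF}(S)=\{\negf,\mathbf{x}_{1},\dots,\mathbf{x}_{t-1}\}$, whence $\operatorname{t}(S)=t$ and Proposition~\ref{bigprop}(vii)$\Rightarrow$(i), applied along $\prec$, gives almost-symmetry. For $\subseteq$, any $\mathbf{p}\in\operatorname{PF}(S)$ has $\mathbf{p}+\negn\in\operatorname{C}(S,\negn)$, so equals a $\mathbf{b}_{i}$ (then $\mathbf{p}=\mathbf{x}_{i}$) or an $\mathbf{a}_{j}$; if $j<m$ then condition 4 produces $\mathbf{a}_{m-j}\in S^{*}$ with $\mathbf{p}+\mathbf{a}_{m-j}=\negf\in S$, impossible, so $\mathbf{p}=\negf$. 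For $\supseteq$, each $\mathbf{x}_{i}$ is $\leq_{S}$-maximal: if $\mathbf{x}_{i}\leq_{S}\mathbf{p}\in\operatorname{PF}(S)$ with $\mathbf{p}\ne\mathbf{x}_{i}$, then $\mathbf{p}=\negf$ gives $\mathbf{x}_{t-i}=\negf-\mathbf{x}_{i}\in S^{*}$ (a gap, impossible), while $\mathbf{p}=\mathbf{x}_{k}$ gives $\mathbf{b}_{i}\leq_{S}\mathbf{b}_{k}$ with $i\ne k$, contradicting condition 3.
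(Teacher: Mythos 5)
Your proof is correct and takes essentially the same approach as the paper's: the same decomposition of $\operatorname{C}(S,\mathbf{n})$ (the $\mathbf{b}_i$'s being the translates by $\mathbf{n}$ of $\operatorname{PF}(S)\setminus\{\mathbf{f}\}$, the $\mathbf{a}_j$'s the remaining elements), the same reliance on the equivalences of Proposition~\ref{bigprop} in the forward direction, and the same recovery of the Frobenius element and of $\operatorname{PF}(S)$ in the converse. The only deviations are cosmetic: you apply Proposition~\ref{bigprop}(v) to the gap $\mathbf{a}_m-\mathbf{a}_j$ where the paper applies Proposition~\ref{bigprop}(iv) to $\mathbf{a}_j-\mathbf{n}\in\operatorname{Z}_{S}$, and you replace the appeal to Lemma~\ref{ordered} by an equivalent inline order-reversing-involution argument on the chain of $\mathbf{a}_j$'s.
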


\begin{proof}
$(\Rightarrow)$ Suppose that $S$ is almost symmetric with type $t$, in particular $S$ has a unique Frobenius element $\negf$. By Proposition~\ref{maximals}, $\mathrm{Maximals}_{\leq}\operatorname{C}(S,\negn)=\{\negf+\negn\}$, and let $\nega_{m}=\negf+\negn$. Observe that if $\negx \in \operatorname{PF}(S)$ then it is possible to express $\negx=\negb-\negn$ with $\negb \in S$. So, let $\operatorname{PF}(S)=\{\mathbf{b}_{i}-\mathbf{n}, \mathbf{a}_{m}-\negn\mid i=1,\ldots,t-1\}$, with $\mathbf{b}_{1}\prec \mathbf{b}_{2}\prec \ldots \prec \mathbf{b}_{t-1}$. In particular $\mathbf{b}_{i}\in \operatorname{C}(S,\negn)$ for every $i\in \{1,\ldots,t-1\}$ and observe that $\negb_i \nleq_{S} \negb_j$ for every $i\neq j$ and $\mathbf{b}_{t-1}\prec \nega_{m}$. Consider the two disjoint set $\{\mathbf{a}_{0}=\mathbf{0}\prec\mathbf{a}_{1}\prec \cdots \prec \mathbf{a}_{m}\}$ and $\{\mathbf{b}_{1}\prec \cdots \prec \mathbf{b}_{t-1} \}$, whose union is $\operatorname{C}(S,\negn)$. By Lemma~\ref{ordered} it suffices to prove that 
\begin{enumerate}
\item for every $\nega_i$ there exists $\nega_j$ such that $\nega_i+\nega_j=\nega_m$; and
\item for every $\negb_i$ there exists $\negb_j$ such that $\negb_i+\negb_j=\nega_m+\negn$.
\end{enumerate}
\noindent (1) Let $i\in \{1,\ldots,m-1\}$ (for $i=0$ and $i=m$ it is trivial) and consider the element $\nega_i$. We have that $\nega_i-\negn \notin S$ and since $\nega_i \in \operatorname{C}(S,\negn)$ then $\negf-(\nega_i-\negn)=\negf+\negn-\nega_i\in \NN_0^d$, that is $\nega_i-\negn \in Z_{S}$. Let $\negx=\nega_m-\nega_i=\negf-(\nega_i-\negn)$, since $S$ is almost symmetric then $\negx\in S$ or $\negx \in \operatorname{PF}(S)$, by iv) of Proposition~\ref{bigprop}. If $\negx \in \operatorname{PF}(S)$ then, by vi) of Proposition~\ref{bigprop}, $\nega_i-\negn=\negf-\negx \in \operatorname{PF}(S)$, that is a contradiction with the definition of $\nega_i$. So $\negx \in S$, in particular $\nega_m=\nega_i+\negx$ and it follows, by Remark~\ref{sumAp}, that $\negx \in \operatorname{Ap}(S,\negn)$. Moreover $\negx \leq \negf +\negn$, so $\negx \in \operatorname{C}(S,\negn)$. If $\negx = \negb_j$ for some $j\in \{1,\ldots,t-1\}$ then $\negf=\nega_i+(\negb_j-\negn)$, that is a contradiction since $\nega_i\in S$ and $\negb_j-\negn \in \operatorname{PF}(S)$. We can conclude that $\negx=\nega_j$ for some $j\in \{1,\ldots,m-1\}$ and $\nega_m=\nega_i+\nega_j$.\\ 
(2) Let $i\in \{1,\ldots,t-1\}$ and consider the element $\negb_i$. Then $\negb_i-\negn \in \operatorname{PF}(S)$ and by vi) of Proposition~\ref{bigprop} we have $(\negb_i-\negn)+\negx=\negf$ for some $\negx \in \operatorname{PF}(S)\setminus \{\negf\}$. In particular $\negx=\negb_j-\negn$ for some $j\in \{1,\ldots,t-1\}$, that is $\negb_i+\negb_j=\negf+2\negn=\nega_m+\negn$.
\\$(\Leftarrow)$ We want to prove that $S$ is almost symmetric of type $t$ by showing that $\nega_m-\negn$ is the Frobenius element of $S$ and $\operatorname{PF}(S)\setminus \{\nega_m-\negn\}=\{\negb_1-\negn \prec \negb_2-\negn\prec \cdots \prec \negb_{t-1}-\negn\}$, since in this case $(\negb_i-\negn)+(\negb_{t-i}-\negn)=\nega_m+\negn-2\negn=\nega_m-\negn$ for all $i=1,\ldots,t-1$, that is, $S$ is almost symmetric by vii) of Proposition~\ref{bigprop}.\\
By the fact that $\mathbf{a}_{i}+\mathbf{a}_{m-i}=\mathbf{a}_{m}$ for all $i \in \{0,1,\ldots,m\}$ and $\mathbf{b}_{j}+\mathbf{b}_{t-j}=\mathbf{a}_{m}+\mathbf{n}$ for all $i \in \{1,\ldots,t-1\}$, we can easily obtain that $\mathrm{Maximals}_{\leq}\operatorname{C}(S,\negn)=\{\nega_m\}$, in particular $S$ is Frobenius with Frobenius element $\negf=\nega_m-\negn$. Furthermore $\nega_i\leq_{S} \nega_m$ for all $i \in \{0,1,\ldots,m-1\}$, that is, $\nega_i -\negn \notin \operatorname{PF}(S)$ for every $i\in \{0,1,\ldots,m-1\}$ from Proposition~\ref{pseudoApery}. Therefore $\operatorname{PF}(S)\setminus \{\nega_m-\negn\}\subseteq \{\negb_1-\negn \prec \negb_2-\negn\prec \cdots \prec \negb_{t-1}-\negn\}$ from Proposition~\ref{pseudoApery} and 2) of Proposition~\ref{maximals}. Suppose that $\negb_i-\negn \notin \operatorname{PF}(S)$ for some $i \in \{0,1,\ldots,t-1\}$. Since $\negb_i-\negn \in \mathbb{N}_0^d$ then $\negb_i-\negn \in \operatorname{H}(S)$ and there exists $\negs \in S^*$ such that $\negb_i-\negn+\negs \in \operatorname{PF}(S)$. If $\negb_i-\negn+\negs =\negb_{j}-\negn$ for some $j\neq i$ we easily obtain that $\negb_i\leq_{S}\negb_j$, that is a contradiction. So the only possibility is $\negb_i-\negn+\negs=\negf$. In such a case we have $\negb_i+\negs+\negn=\nega_m+\negn$, and we obtain $\negs+\negn=\negb_{t-i}$, in particular $\negs=\negb_{t-i}-\negn \notin S$, that is a contradiction for the definition of $\negs$. So $\negb_i-\negn \in \operatorname{PF}(S)$ for every $i\in\{1,\ldots,t-1\}$.
\end{proof}

Observing that for numerical semigroups $\operatorname{C}(S,\negn)=\operatorname{Ap}(S,\negn)$, the previous result provides a generalization to the GNSs framework of the description of almost symmetric numerical semigroups in respect of Ap\'ery sets (cf. \cite[Theorem 2.4]{symmetries}).

\begin{remark}
If $S\subseteq \NN_0^d$ is an almost symmetric GNS, $\negn\in S$ and $\operatorname{C}(S,\negn)=\{\mathbf{a}_{0}=\mathbf{0}\prec\mathbf{a}_{1}\prec \cdots \prec \mathbf{a}_{m}\}\cup \{\mathbf{b}_{1}\prec \cdots \prec\mathbf{b}_{t-1} \}$ with the hypotheses of Theorem~\ref{AsymAp}, then from the proof of the theorem we have that  $\operatorname{PF}(S)=\{\mathbf{b}_{i}-\mathbf{n}, \mathbf{a}_{m}-\negn\mid i=1,\ldots,t-1\}$.
\label{remarkAp}
\end{remark}

\begin{corollary}
Let $S\subseteq \NN_0^d$ be an almost symmetric GNS and $\negn \in S$. Then $$\mathrm{Maximals}_{\leq_{S}}\operatorname{Ap}(S,\negn)=\mathrm{Maximals}_{\leq_{S}}\operatorname{C}(S,\negn).$$
\end{corollary}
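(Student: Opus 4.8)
The plan is to reduce everything to a cardinality count, since the inclusion
$$\mathrm{Maximals}_{\leq_{S}}\operatorname{Ap}(S,\negn)\subseteq \mathrm{Maximals}_{\leq_{S}}\operatorname{C}(S,\negn)$$
is already supplied by part 2) of Proposition~\ref{maximals}. It therefore suffices to verify that both sides are finite of the same size and then invoke the elementary fact that a subset of a finite set with equal cardinality must coincide with it. We may assume $\negn\in S^{*}$, the case $\negn=\mathbf{0}$ being trivial as both Apéry sets are empty. The left-hand side is easy: by Proposition~\ref{pseudoApery} the map $\mathbf{w}\mapsto \mathbf{w}-\negn$ restricts to a bijection from $\mathrm{Maximals}_{\leq_{S}}\operatorname{Ap}(S,\negn)$ onto $\operatorname{PF}(S)$, so this set has exactly $t=\operatorname{t}(S)$ elements.

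The core of the argument is to show that $\mathrm{Maximals}_{\leq_{S}}\operatorname{C}(S,\negn)$ also has exactly $t$ elements. Here I would fix a monomial order and use the decomposition
$$\operatorname{C}(S,\negn)=\{\mathbf{a}_{0}=\mathbf{0}\prec\cdots \prec \mathbf{a}_{m}\}\cup \{\mathbf{b}_{1}\prec \cdots \prec\mathbf{b}_{t-1}\}$$
furnished by Theorem~\ref{AsymAp}, together with its structural conditions. From condition 4) one gets $\mathbf{a}_{m}-\mathbf{a}_{i}=\mathbf{a}_{m-i}\in S$, so $\mathbf{a}_{i}\leq_{S}\mathbf{a}_{m}$ for every $i$, strictly when $i<m$ since then $\mathbf{a}_{m-i}\neq \mathbf{0}$; hence no $\mathbf{a}_{i}$ with $i<m$ is $\leq_{S}$-maximal, and the only candidates for maximality are $\mathbf{a}_{m}$ together with the $\mathbf{b}_{j}$.

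I would then check that each of these $t$ candidates is genuinely $\leq_{S}$-maximal in $\operatorname{C}(S,\negn)$. For $\mathbf{a}_{m}$: it is the unique $\leq$-maximal element of $\operatorname{C}(S,\negn)$ by part 1) of Proposition~\ref{maximals}, so any $\mathbf{y}$ with $\mathbf{y}-\mathbf{a}_{m}\in S\subseteq \NN_0^{d}$ forces $\mathbf{y}=\mathbf{a}_{m}$, whence $\mathbf{a}_{m}$ is maximal. For each $\mathbf{b}_{j}$: condition 3) shows the $\mathbf{b}_{j}$ are pairwise $\leq_{S}$-incomparable, while condition 5) gives $\mathbf{a}_{m}-\mathbf{b}_{j}=\mathbf{b}_{t-j}-\negn\in \operatorname{PF}(S)\subseteq \operatorname{H}(S)$ (using the identification of $\operatorname{PF}(S)$ in Remark~\ref{remarkAp}), so $\mathbf{b}_{j}\nleq_{S}\mathbf{a}_{m}$; transitivity through $\mathbf{a}_{i}\leq_{S}\mathbf{a}_{m}$ then rules out $\mathbf{b}_{j}\leq_{S}\mathbf{a}_{i}$ for every $i$ as well. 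Thus each $\mathbf{b}_{j}$ is maximal. Since the two chains are disjoint and the $\mathbf{b}_{j}$ are distinct, the $\leq_{S}$-maximal elements of $\operatorname{C}(S,\negn)$ are exactly $\{\mathbf{a}_{m},\mathbf{b}_{1},\ldots,\mathbf{b}_{t-1}\}$, a set of cardinality $t$.

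With both cardinalities equal to $t$ and the inclusion already established, the two sets of maximals coincide. The main obstacle I anticipate is the bookkeeping in the third paragraph: correctly pinning down which elements of $\operatorname{C}(S,\negn)$ are $\leq_{S}$-maximal, and in particular ruling out all comparabilities between the $\mathbf{a}$-chain and the $\mathbf{b}$-chain. This is precisely where conditions 3)--5) of Theorem~\ref{AsymAp} and the description of $\operatorname{PF}(S)$ in Remark~\ref{remarkAp} do the essential work.
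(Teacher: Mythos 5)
Your proof is correct and takes essentially the same route as the paper: both extract the decomposition of $\operatorname{C}(S,\negn)$ from Theorem~\ref{AsymAp}, use conditions 4)--5) together with Remark~\ref{remarkAp} to identify $\mathrm{Maximals}_{\leq_{S}}\operatorname{C}(S,\negn)=\{\mathbf{a}_{m},\mathbf{b}_{1},\ldots,\mathbf{b}_{t-1}\}$, and then connect this to $\operatorname{Ap}(S,\negn)$ via Proposition~\ref{pseudoApery}. The only cosmetic difference is the closing step: you finish by a cardinality count ($|\mathrm{Maximals}_{\leq_{S}}\operatorname{Ap}(S,\negn)|=|\operatorname{PF}(S)|=t$ combined with the inclusion from Proposition~\ref{maximals}), whereas the paper checks directly that each of these $t$ elements lies in $\mathrm{Maximals}_{\leq_{S}}\operatorname{Ap}(S,\negn)$.
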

\begin{proof}
Consider $\operatorname{C}(S,\negn)=\{\mathbf{a}_{0}=\mathbf{0}\prec\mathbf{a}_{1}\prec \cdots \prec \mathbf{a}_{m}\}\cup \{\mathbf{b}_{1}\prec \cdots \prec\mathbf{b}_{t-1} \}$ with the hypotheses of Theorem~\ref{AsymAp}. Since $\mathbf{a}_{i}+\mathbf{a}_{m-i}=\mathbf{a}_{m}$ for all $i \in \{0,1,\ldots,m\}$ then $\nega_i\leq_{S} \nega_m$ for all $i\in \{0,1,\ldots,m-1\}$. Moreover  $\negb_{i}\nleq_{S}\nega_m$ for every $i\in \{1,\ldots,t-1\}$, since $\negb_i+(\negb_{t-i}-\negn)=\nega_m$ and $\negb_{t-i}-\negn\in \operatorname{H}(S)$. So $\mathrm{Maximals}_{\leq_{S}}\operatorname{C}(S,\negn)=\{\mathbf{b}_{1}, \ldots,\mathbf{b}_{t-1},\nega_m \}$. Let $\nega_m$ and $\negb_i$ for $i\in\{1,\ldots,t-1\}$, then from Remark~\ref{remarkAp} we have that $\nega_m-\negn,\negb_i-\negn \in \operatorname{PF}(S)$ and from Proposition~\ref{pseudoApery} we have $\nega_m,\negb_i\in \mathrm{Maximals}_{\leq_{S}}\operatorname{Ap}(S,\negn) $.
\end{proof}

Hence, for almost symmetric GNSs is verified the equality in the claim 2) of Proposition~\ref{maximals}. It is an open question to know if there exist other classes of generalized numerical semigroups with this property.

\begin{example}
Let $S=\NN_0^2\setminus \{(0,1),(0,2),(1,0),(1,1),(1,2),(2,1), (2,2),(3,2)\}$ be the semigroup generated by the set $\{(0,3), (0,5), (0,4), (3,0),(2,0),(1,3),(1,4), (3,1), (1,5),\newline (4,1), (4,2), (5, 2)\}$. $S$ has Frobenius element $\negf=(3,2)$ and $\operatorname{g}(S)=8$.\\ 
We have $\operatorname{PF}(S)=\{(1,0),(1,1),(2,1),(2,2),(3,2)\}$, in particular $\operatorname{t}(S)=5$. $S$ is almost symmetric, since $2\operatorname{g}(S)-\operatorname{t}(S)+1=2\cdot 8-5+1=12=|\negf+\mathbf{1}|_{\times}$.\\
In Figure~\ref{fig:GNS} we consider the GNS $S$, where the elements in $\operatorname{PF}(S)$ are marked in red and the elements $\operatorname{H}(S)\setminus \operatorname{PF}(S)$ are marked in black. The elements of $S$ lying in the red region are the elements of $\operatorname{N}(\negf)$. We next compute the set $\operatorname{C}(S,(3,1))$ and arrange it according to Theorem \ref{AsymAp}. \\
Considering the lexicographic order, we can arrange the set in the following way:\\
$\operatorname{C}(S,(3,1))=\{(0,0),(0,3),(1,3),(2,0), (2,3),(3,0),(3,3),(4,0),(4,3),(5,0),(6,0),(6,3)\} \newline \bigcup \  \{(4,1),(4,2),( 5,2),(5,3)\}$.

Considering the graded lexicographic order, we can arrange the set in the following way:\\
$\operatorname{C}(S,(3,1))=\{(0,0),(2,0), (0,3),(3,0),(1,3),(4,0),(2,3),(5,0),(3,3),(6,0),(4,3),(6,3)\}  \newline \bigcup \  \{(4,1),(4,2),( 5,2),(5,3)\}$.

It is easy to see, in both cases, that Theorem~\ref{AsymAp} is verified.

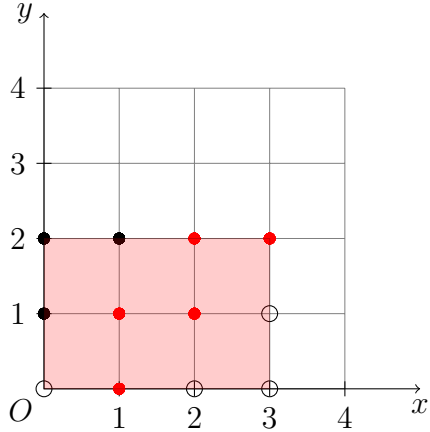
\begin{figure}[htp]

\begin{tikzpicture} 
\draw [help lines] (0,0) grid (4,4);
\draw [<->] (0,5) node [left] {$y$} -- (0,0)
-- (5,0) node [below] {$x$};
\foreach \i in {1,...,4}
\draw (\i,1mm) -- (\i,-1mm) node [below] {$\i$} 
(1mm,\i) -- (-1mm,\i) node [left] {$\i$}; 
\node [below left] at (0,0) {$O$};

\draw (0,0) circle (3pt);
\draw (2,0) circle (3pt);
\draw (3,0) circle (3pt);
\draw (3,1) circle (3pt);
\draw [mark=*, color=red] plot (1,0);
\draw [mark=*] plot (0,1);
\draw [mark=*] plot (0,2);
\draw [mark=*, color=red] plot (1,1);
\draw [mark=*] plot (1,2);
\draw [mark=*, color=red] plot (2,1);
\draw [mark=*, color=red] plot (2,2);
\draw [mark=*, color=red] plot (3,2);

\draw [fill=red, opacity=0.2] (0,0) rectangle (3,2);
\end{tikzpicture}
\caption{The generalized numerical semigroup in Example~\ref{exaf}.}\label{fig:GNS}
\end{figure}
\label{exaf}

\end{example}

The following two characterizations for symmetric and pseudo-symmetric GNS are easy consequences of Theorem~\ref{AsymAp} and Remark~\ref{remarkAp}. They were proved in a different way in \cite{phdthesis}. They can be also viewed as generalizations of well-known results on numerical semigroups (see \cite[Propositions 4.10 and 4.15]{book}).

\begin{corollary}
Let $S\subseteq \mathbb{N}_0^{d}$ be GNS, $\negn\in S$ and $\prec$ a monomial order in $\mathbb{N}_0^{d}$. Then $S$ is symmetric if and only if $\operatorname{C}(S,\negn)=\{\nega_{0}\prec\nega_{1}\prec \cdots \prec \nega_{m}\}$ with $\nega_{i}+\nega_{m-i}=\nega_{m}$, for $i=0,1,\ldots,m$.
\end{corollary}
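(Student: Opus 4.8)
The plan is to specialize Theorem~\ref{AsymAp} to the case of symmetric GNSs, which by the Corollary following Proposition~\ref{bigprop} (and Theorem~\ref{irreducible}) are exactly the almost symmetric GNSs of type $t=1$. The claim is essentially that the decomposition in Theorem~\ref{AsymAp} collapses: when $t=1$ the block $\{\negb_1 \prec \cdots \prec \negb_{t-1}\}$ is empty, so $\operatorname{C}(S,\negn)$ consists solely of the chain $\{\nega_0 \prec \nega_1 \prec \cdots \prec \nega_m\}$ satisfying the symmetry relation $\nega_i + \nega_{m-i} = \nega_m$. Thus the strategy is simply to invoke Theorem~\ref{AsymAp} and observe what each direction yields for $t=1$.

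For the forward direction, I would assume $S$ is symmetric. Since $S$ is symmetric it is almost symmetric with $\operatorname{t}(S) = 1$ (by the Corollary above, or directly because $\operatorname{PF}(S) = \{\negf\}$). Applying Theorem~\ref{AsymAp} with $t = 1$, I arrange $\operatorname{C}(S,\negn) = \{\nega_0 = \mathbf{0} \prec \cdots \prec \nega_m\} \cup \{\negb_1 \prec \cdots \prec \negb_{t-1}\}$; but for $t=1$ the second set indexed by $\{1,\ldots,t-1\} = \varnothing$ is empty, so $\operatorname{C}(S,\negn) = \{\nega_0 \prec \cdots \prec \nega_m\}$, and condition~4 of the theorem gives exactly $\nega_i + \nega_{m-i} = \nega_m$ for all $i \in \{0,1,\ldots,m\}$.

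For the converse, I would assume $\operatorname{C}(S,\negn) = \{\nega_0 \prec \cdots \prec \nega_m\}$ with $\nega_i + \nega_{m-i} = \nega_m$ for all $i$. This is precisely the hypothesis of the $(\Leftarrow)$ direction of Theorem~\ref{AsymAp} in the degenerate case where the $\negb$-block is empty, i.e.\ $t = 1$: conditions 1, 2, 3, and 5 are vacuously satisfied (there are no $\negb_i$), and condition~4 is the assumed relation. Hence Theorem~\ref{AsymAp} yields that $S$ is almost symmetric of type $1$, and type~$1$ forces $\operatorname{PF}(S) = \{\negf\}$ (with $\negf = \nega_m - \negn$), which is the definition of symmetric.

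The only delicate point is handling the degenerate indexing cleanly: one must verify that setting $t=1$ genuinely empties the $\negb$-block and renders conditions 1--3 and 5 vacuous rather than contradictory, and that the ``one Frobenius element'' characterization of symmetry (Remark~\ref{remarkAp} gives $\operatorname{PF}(S) = \{\nega_m - \negn\}$ when the $\negb$'s are absent) matches $\operatorname{PF}(S) = \{\negf\}$. Since both directions are immediate reductions of the already-proven Theorem~\ref{AsymAp}, no genuinely new obstacle arises; the main care needed is simply confirming that the empty-block specialization is legitimate and that type~$1$ is equivalent to symmetry, which is exactly the content of the earlier Corollary and Theorem~\ref{irreducible}.
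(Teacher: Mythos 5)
Your proof is correct and takes essentially the same route as the paper, which presents this corollary precisely as an easy consequence of Theorem~\ref{AsymAp} and Remark~\ref{remarkAp}, i.e.\ the $t=1$ specialization in which the $\mathbf{b}$-block is empty and conditions 1--3 and 5 become vacuous. The only points worth recording explicitly are the ones you already flag: symmetry is equivalent to type $1$ (so Theorem~\ref{AsymAp} applies with $t=1$), and in the converse direction the relation $\nega_0+\nega_m=\nega_m$ forces $\nega_0=\mathbf{0}$, after which Remark~\ref{remarkAp} gives $\operatorname{PF}(S)=\{\nega_m-\negn\}$.
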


\begin{corollary}
Let $S\subseteq \mathbb{N}_0^{d}$ be a GNS, $\negn\in S$ and $\prec$ a monomial order in $\mathbb{N}_0^{d}$. Then $S$ is pseudo-symmetric if and only if $\operatorname{C}(S,\negn)=\{\nega_{0}\prec\nega_{1}\prec \cdots \prec \nega_{m}=\negf+\negn\}\cup \left\lbrace {\negf \over 2}+\negn\right\rbrace$ where $\negf$ is maximal in $\operatorname{H}(S)$ with respect to $\leq$ and $\nega_{i}+\nega_{m-i}=\nega_{m}$, for $i=0,1,\ldots,m$.
\end{corollary}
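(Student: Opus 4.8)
The plan is to obtain this corollary as the specialization $t=2$ of Theorem~\ref{AsymAp}, together with the earlier characterization stating that a GNS is pseudo-symmetric if and only if it is almost symmetric with $\operatorname{t}(S)=2$. Thus the whole argument reduces to translating the five structural conditions of Theorem~\ref{AsymAp} into the present simpler form when the block of $\negb$'s collapses to a single element.

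For the forward implication I would argue as follows. Assume $S$ is pseudo-symmetric. By the aforementioned corollary, $S$ is almost symmetric with $t=\operatorname{t}(S)=2$, so Theorem~\ref{AsymAp} applies and arranges
$$\operatorname{C}(S,\negn)=\{\nega_{0}=\mathbf{0}\prec\nega_{1}\prec \cdots \prec \nega_{m}\}\cup \{\negb_{1}\prec \cdots \prec\negb_{t-1}\},$$
with $\nega_m=\negf+\negn$ for the Frobenius element $\negf$. Since $t=2$, the second block is the single element $\negb_1$. Condition~4 gives $\nega_i+\nega_{m-i}=\nega_m$ for all $i$, while Condition~5 with $t=2$ reads $2\negb_1=\nega_m+\negn=\negf+2\negn$, hence $\negb_1=\frac{\negf}{2}+\negn$. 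This is exactly the claimed arrangement, and $\negf$ is maximal in $\operatorname{H}(S)$ with respect to $\leq$ because it is the Frobenius element.

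For the converse I would take the stated form of $\operatorname{C}(S,\negn)$, set $t=2$ and $\negb_1=\frac{\negf}{2}+\negn$, and verify the five hypotheses of Theorem~\ref{AsymAp}. Condition~4 is given; Condition~3 is vacuous since there is only one $\negb$; Condition~2 holds because $\negb_1-\negn=\frac{\negf}{2}\in\NN_0^d$; Condition~5 is the identity $2\bigl(\frac{\negf}{2}+\negn\bigr)=\negf+2\negn=\nega_m+\negn$; and Condition~1, namely $\negb_1\prec\nega_m$, follows from $\frac{\negf}{2}\prec\negf$, which holds because every monomial order extends the natural partial order, $\frac{\negf}{2}\leq\negf$, and $\frac{\negf}{2}\neq\negf$ as $\negf\neq\mathbf{0}$. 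Theorem~\ref{AsymAp} then yields that $S$ is almost symmetric of type $2$, so $S$ is pseudo-symmetric by the earlier corollary; Remark~\ref{remarkAp} moreover confirms $\operatorname{PF}(S)=\{\negf,\frac{\negf}{2}\}$.

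The only genuine step, as opposed to bookkeeping, is the reverse direction's check that the given data constitute a legitimate instance of the Theorem~\ref{AsymAp} arrangement. Here the subtle points are that $\frac{\negf}{2}+\negn$ must really play the role of $\negb_1$, which rests on the implicit hypothesis that $\negf$ has even components so that $\frac{\negf}{2}\in\NN_0^d$, and on the strict comparison $\frac{\negf}{2}\prec\negf$ in the chosen monomial order. Once these are settled, everything else is a direct substitution of $t=2$ into the conditions already established for the general almost symmetric case.
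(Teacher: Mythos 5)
Your proposal is correct and follows exactly the route the paper intends: the paper states this corollary as an ``easy consequence'' of Theorem~\ref{AsymAp} and Remark~\ref{remarkAp}, and your argument is precisely that specialization to $t=2$, combined with the earlier corollary that pseudo-symmetric is equivalent to almost symmetric of type~$2$. Your explicit checks of the five conditions (including $\frac{\negf}{2}\in\NN_0^d$ and $\frac{\negf}{2}+\negn\prec\negf+\negn$) are the details the paper leaves implicit, so there is nothing to add.
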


\section{Computing all almost symmetric GNSs with fixed Frobenius element}

Starting from the observation that every $\negf \in \NN_0^d \setminus \{\mathbf{0}\}$ is the Frobenius element of an almost symmetric GNS, since we can consider the ordinary GNS $S(\negf)$ as introduced in Example~\ref{ex2}, we present in this section some procedures to obtain all almost symmetric GNSs having $\negf$ as Frobenius element. In special, we show that these methods enable to arrange all almost symmetric GNSs with a fixed Frobenius element in a rooted tree. Algorithms with this purpose are known for numerical semigroups (see \cite{giventype} and \cite{construction}).\\

We consider first when the unitary extension of a GNS preserves the property of being almost symmetric.  Recall that if $S$ is a GNS and $x \notin S$ then $S \cup \{\negx\}$ is a GNS if and only if $\negx \in \operatorname{SG}(S)$.

\begin{proposition}
	Let $S\subseteq \NN_0^d$ be an almost symmetric GNS with Frobenius element $\negf$ and $\negx \in \operatorname{SG}(S)\setminus \{\negf\}$. Then $S\cup \{\negx\}$ is almost symmetric if and only if $\operatorname{PF}(S \cup \{\negx\})=\operatorname{PF}(S)\setminus \{\negx,\negf-\negx\}$.
	\label{sonAS}
\end{proposition}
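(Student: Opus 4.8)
The plan is to reduce the statement to a cardinality count for $\operatorname{PF}$: I would first settle the genus–type arithmetic, and then show that passing from $S$ to $S'=S\cup\{\negx\}$ produces no new pseudo-Frobenius elements. I begin by collecting the elementary facts. Since $\negx\in\operatorname{SG}(S)$ we have $2\negx\in S$ whereas $\negf\in\operatorname{H}(S)$, so $2\negx\neq\negf$ and hence $\negx\neq\negf-\negx$; moreover $\operatorname{SG}(S)\subseteq\operatorname{PF}(S)$ gives $\negx\in\operatorname{PF}(S)\setminus\{\negf\}$, and condition vi) of Proposition~\ref{bigprop} then yields $\negf-\negx\in\operatorname{PF}(S)\setminus\{\negf\}$. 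Thus $\{\negx,\negf-\negx\}$ is a two-element subset of $\operatorname{PF}(S)$ and $|\operatorname{PF}(S)\setminus\{\negx,\negf-\negx\}|=t-2$. Because $\negx\in\operatorname{SG}(S)$, the set $S'$ is a GNS with $\operatorname{H}(S')=\operatorname{H}(S)\setminus\{\negx\}$, and since $\negx\neq\negf$ the element $\negf$ stays the unique $\leq$-maximal gap, so $(S',\negf)$ is a Frobenius GNS of genus $g-1$. Combining Definition~\ref{ASdef}, Proposition~\ref{ASformula} and Proposition~\ref{remarkFrobenius} (together with $2g+1-t=|\negf+\mathbf{1}|_\times$ for the almost symmetric $S$), I obtain the numerical equivalence
$$S'\text{ is almost symmetric}\iff 2(g-1)+1-\operatorname{t}(S')=|\negf+\mathbf{1}|_\times\iff \operatorname{t}(S')=t-2.$$

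With this in hand the implication $(\Leftarrow)$ is immediate: assuming $\operatorname{PF}(S')=\operatorname{PF}(S)\setminus\{\negx,\negf-\negx\}$, the count above gives $\operatorname{t}(S')=t-2$, and the numerical equivalence makes $S'$ almost symmetric.

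For $(\Rightarrow)$ I would first check that $\negx,\negf-\negx\notin\operatorname{PF}(S')$: the point $\negx$ lies in $S'$ and so is no longer a gap, while $(\negf-\negx)+\negx=\negf\notin S'$ with $\negx\in(S')^{*}$ shows $\negf-\negx$ violates the defining condition of $\operatorname{PF}(S')$. The crux is then the single inclusion $\operatorname{PF}(S')\subseteq\operatorname{PF}(S)$, i.e.\ that no new pseudo-Frobenius element is produced. Suppose $\negh\in\operatorname{PF}(S')\setminus\operatorname{PF}(S)$; then $\negh\in\operatorname{H}(S')\subseteq\operatorname{H}(S)$ and $\negh\neq\negf$ (as $\negf\in\operatorname{PF}(S)$), so $\negh\in\operatorname{H}(S)\setminus(\operatorname{PF}(S)\setminus\{\negf\})$, and Remark~\ref{remarkbijection} (applicable since $S$ is almost symmetric) gives $\negf-\negh\in S\subseteq S'$. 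As $\negh\neq\negf$ we have $\negf-\negh\in(S')^{*}$, so $\negh\in\operatorname{PF}(S')$ forces $\negf=\negh+(\negf-\negh)\in S'$, contradicting $\negf\in\operatorname{H}(S')$. Hence $\operatorname{PF}(S')\subseteq\operatorname{PF}(S)\setminus\{\negx,\negf-\negx\}$, a set of size $t-2$; since $S'$ is almost symmetric the numerical equivalence gives $\operatorname{t}(S')=t-2$, so this inclusion of sets of equal finite cardinality is an equality, as required.

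The step I expect to be the main obstacle is the inclusion $\operatorname{PF}(S')\subseteq\operatorname{PF}(S)$ in $(\Rightarrow)$. The naive route is to describe $\operatorname{PF}(S')$ explicitly, deciding for each $\negh\in\operatorname{PF}(S)\setminus\{\negx,\negf-\negx\}$ whether $\negh+\negx\in S$, which turns into an awkward case analysis (notably one must rule out the possibility $\negh+\negx\in\operatorname{PF}(S)$). The decisive simplification is to bypass this altogether: establish only that a would-be new element $\negh$ would satisfy $\negf-\negh\in S'$ and hence $\negf\in S'$, a contradiction, and then let the forced identity $\operatorname{t}(S')=t-2$ upgrade the one-sided inclusion to the desired equality.
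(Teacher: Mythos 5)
Your proof is correct, but in the forward direction it takes a genuinely different route from the paper's. The paper proves $(\Rightarrow)$ by verifying both inclusions element-wise: for $\negh\in\operatorname{PF}(S\cup\{\negx\})$ it checks that $\negh+\negs\neq\negx$ for every $\negs\in S^*$ (using $\negf-\negx\in\operatorname{PF}(S)$) to obtain $\operatorname{PF}(S\cup\{\negx\})\subseteq\operatorname{PF}(S)\setminus\{\negx,\negf-\negx\}$, and then proves the reverse inclusion by taking $\negh\in\operatorname{PF}(S)\setminus\{\negx,\negf-\negx\}$ and ruling out $\negh+\negx\notin S\cup\{\negx\}$ via the almost-symmetry of $S\cup\{\negx\}$ (conditions iv)/v) of Proposition~\ref{bigprop}) --- precisely the case analysis you set out to avoid. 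You instead prove only the first inclusion, and by a different argument: a contradiction via Remark~\ref{remarkbijection} applied to $S$, which notably uses only the almost-symmetry of $S$, so your inclusion $\operatorname{PF}(S\cup\{\negx\})\subseteq\operatorname{PF}(S)\setminus\{\negx,\negf-\negx\}$ in fact holds unconditionally; you then upgrade it to an equality through the identity $\operatorname{t}(S\cup\{\negx\})=\operatorname{t}(S)-2$ forced by the almost-symmetry of $S\cup\{\negx\}$. Your $(\Leftarrow)$ is the same counting argument as the paper's. What your route buys is economy: all use of the almost-symmetry of $S\cup\{\negx\}$ is concentrated in a single numerical identity, at the cost of having to verify that $(S\cup\{\negx\},\negf)$ is a Frobenius GNS of genus $\operatorname{g}(S)-1$ so that Propositions~\ref{ASformula} and~\ref{remarkFrobenius} apply (which you do, correctly, since every gap of $S$ lies below $\negf$). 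The paper's element-wise proof of the reverse inclusion avoids that bookkeeping and exhibits explicitly which pseudo-Frobenius elements survive the extension, but pays for it with the case analysis you bypass.
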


\begin{proof}
	$(\Rightarrow)$ Suppose $S\cup \{\negx\}$ is almost symmetric. Let $\negh \in \operatorname{PF}(S \cup \{\negx\})$, observe that $\negf-\negh \notin S\cup \{\negx\}$ and also that $\negh\neq \negf-\negx$ since $\negh \in \operatorname{PF}(S\cup \{\negx\})$ and $\negh+\negx=\negf\notin S\cup \{\negx\}$. Let $\negs \in S^*$, then $\negh+\negs \in S\cup \{\negx\}$, but $\negh+\negs\neq \negx$ otherwise $(\negf-\negx)+\negs=\negf-\negh$, that is a contradiction since $\negf-\negx \in \operatorname{PF}(S)$ and $\negf-\negh\notin S$, so $\negh \in \operatorname{PF}(S)$. Let $\negh \in \operatorname{PF}(S)\setminus \{\negx,\negf-\negx\}$, if $\negh+\negx\notin S\cup \{\negx\}$ then $\negf-\negh-\negx \in S\cup \{\negx\}$ or $\negf-\negh-\negx\in \operatorname{PF}(S\cup \{\negx\})$, since $S\cup \{\negx\}$ is almost symmetric. In both cases $\negf-\negh\in S\cup \{\negx\}$ and since $\negf-\negh\neq \negx$ then $\negf-\negh\in S$, but this is a contradiction, in fact $\negf-\negh\in \operatorname{PF}(S)$ since $S$ is almost symmetric. So $\negh \in \operatorname{PF}(S \cup \{\negx\})$.\\
	$(\Leftarrow)$ Suppose $\operatorname{PF}(S \cup \{\negx\})=\operatorname{PF}(S)\setminus \{\negx,\negf-\negx\}$. Then $\operatorname{t}(S \cup \{\negx\})=\operatorname{t}(S)-2$ and $2\operatorname{g}(S\cup \{\negx\})-\operatorname{t}(S\cup \{\negx\})+1=2\operatorname{g}(S)-\operatorname{t}(S)+1=|\negf+\mathbf{1}|_{\times}$.
\end{proof}

In the following result it is shown when the property of being almost symmetric is preserved by removing an element from a GNS. Recall that if $S$ is a GNS and $\negx \in S$ then $S\setminus \{\negx\}$ is a GNS if and only if $\negx$ is a minimal generators of $S$, that is, $\negx \in S^*\setminus (S^* +S^*)$.

\begin{proposition}
	Let $T\subseteq \NN_0^d$ be an almost symmetric GNS with Frobenius element $\negf$ and $\negx $ be a minimal generator of $T$ such that $\negx< \negf$. Then $T\setminus \{\negx\}$ is almost symmetric if and only if $\negf-\negx\in \operatorname{PF}(T \setminus \{\negx\})$, in particular  $\operatorname{PF}(T)=\operatorname{PF}(T\setminus \{\negx\})\setminus \{\negx,\negf-\negx\}$.
	\label{parentAS}
\end{proposition}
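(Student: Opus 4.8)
The plan is to mirror the structure of the preceding Proposition~\ref{sonAS}, since removing a minimal generator is essentially the inverse operation of adjoining a special gap. The starting observation is that $\negx$ is a minimal generator of $T$ with $\negx<\negf$, so $S:=T\setminus\{\negx\}$ is a GNS whose gap set is $\operatorname{H}(S)=\operatorname{H}(T)\cup\{\negx\}$. Moreover $\negx\in\operatorname{SG}(S)$, because adjoining $\negx$ back recovers the GNS $T$; since $\negx<\negf$ and $\negf\notin T$, we have $\negf\neq\negx$, so $\negx\in\operatorname{SG}(S)\setminus\{\negf\}$ and $\negf$ remains the Frobenius element of $S$. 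This puts us exactly in the hypotheses of Proposition~\ref{sonAS} applied to $S$ and $\negx$, and the content of the current statement is to reinterpret that equivalence from the point of view of $T$.

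For the direction $(\Leftarrow)$, I would assume $\negf-\negx\in\operatorname{PF}(T\setminus\{\negx\})=\operatorname{PF}(S)$. The natural strategy is to verify that the hypothesis of Proposition~\ref{sonAS} holds for $S$, namely that $\operatorname{PF}(S\cup\{\negx\})=\operatorname{PF}(S)\setminus\{\negx,\negf-\negx\}$, which by that proposition forces $S\cup\{\negx\}=T\setminus\{\negx\}\cup\{\negx\}$ to be almost symmetric — but this is circular, so instead I would argue directly. Since $T$ is almost symmetric with Frobenius element $\negf$, by Remark~\ref{remarkbijection} we know $\negf-\negh\in T$ for every $\negh\in\operatorname{H}(T)\setminus(\operatorname{PF}(T)\setminus\{\negf\})$. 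I would examine how passing to $S=T\setminus\{\negx\}$ changes the relevant cardinalities: one has $\operatorname{g}(S)=\operatorname{g}(T)+1$, and the assumption $\negf-\negx\in\operatorname{PF}(S)$ together with $\negx\in\operatorname{PF}(S)$ (as $\negx$ is now a special gap, hence pseudo-Frobenius in $S$) should give $\operatorname{t}(S)=\operatorname{t}(T)+2$. Then a direct genus--type count yields
$$2\operatorname{g}(S)-\operatorname{t}(S)+1=2(\operatorname{g}(T)+1)-(\operatorname{t}(T)+2)+1=2\operatorname{g}(T)-\operatorname{t}(T)+1=|\negf+\mathbf{1}|_\times,$$
so $S$ is almost symmetric by Definition~\ref{ASdef} and Proposition~\ref{remarkFrobenius}. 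The key sub-claim to nail down is precisely the identity $\operatorname{PF}(T)=\operatorname{PF}(S)\setminus\{\negx,\negf-\negx\}$, which simultaneously delivers the ``in particular'' clause and justifies the type count.

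For the direction $(\Rightarrow)$, I would assume $S=T\setminus\{\negx\}$ is almost symmetric and show $\negf-\negx\in\operatorname{PF}(S)$. The cleanest route is again through Proposition~\ref{sonAS}: since $S$ is almost symmetric, $\negx\in\operatorname{SG}(S)\setminus\{\negf\}$, and $S\cup\{\negx\}=T$ is almost symmetric, that proposition gives $\operatorname{PF}(T)=\operatorname{PF}(S)\setminus\{\negx,\negf-\negx\}$, from which $\negf-\negx\in\operatorname{PF}(S)$ is immediate (one checks $\negf-\negx$ is genuinely removed, i.e. it does lie in $\operatorname{PF}(S)$). This observation — that Proposition~\ref{sonAS} is directly applicable to $S$ with the special gap $\negx$ — is what makes both directions short, so the main obstacle is the bookkeeping that $\negf-\negx$ really is a pseudo-Frobenius element of $S$ (and not, say, already in $S$) and that $\negx$ itself becomes pseudo-Frobenius upon removal. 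I expect the subtle point to be confirming $\negx\in\operatorname{PF}(S)$: since $\negx$ is a minimal generator of $T$, for every $\negs\in S^*$ one has $\negx+\negs\in T$ and $\negx+\negs\neq\negx$, and because $\negx$ is a special gap of $S$ the only way $\negx+\negs$ could fail to lie in $S$ is $\negx+\negs=\negx$, which is excluded; this is the routine verification underlying the whole argument.
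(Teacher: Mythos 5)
Your high-level skeleton coincides with the paper's: prove the identity $\operatorname{PF}(T)=\operatorname{PF}(T\setminus\{\negx\})\setminus\{\negx,\negf-\negx\}$ and then conclude by the count $2\operatorname{g}(T\setminus\{\negx\})-\operatorname{t}(T\setminus\{\negx\})+1=2\operatorname{g}(T)-\operatorname{t}(T)+1=|\negf+\mathbf{1}|_\times$, and you correctly see that invoking Proposition~\ref{sonAS} in the backward direction would be circular. The genuine gap is that in that backward direction you never prove the identity; you only name it as ``the key sub-claim to nail down.'' That identity is the entire content of the paper's proof, and both inclusions require real arguments in which the hypotheses enter at specific points. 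For $\operatorname{PF}(T)\subseteq\operatorname{PF}(T\setminus\{\negx\})\setminus\{\negx,\negf-\negx\}$: given $\negh\in\operatorname{PF}(T)$ and $\negt\in(T\setminus\{\negx\})^*$ one has $\negh+\negt\in T$, but one must rule out $\negh+\negt=\negx$; this is exactly where the hypothesis $\negf-\negx\in\operatorname{PF}(T\setminus\{\negx\})$ is used, since $\negh+\negt=\negx$ would give $\negf-\negh=(\negf-\negx)+\negt\in T\setminus\{\negx\}\subseteq T$, contradicting $\negf-\negh\in\operatorname{H}(T)$, which follows from vi) of Proposition~\ref{bigprop} applied to the almost symmetric GNS $T$. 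For $\operatorname{PF}(T\setminus\{\negx\})\setminus\{\negx,\negf-\negx\}\subseteq\operatorname{PF}(T)$: for such $\negh$ the only point is to show $\negh+\negx\in T$; if not, v) of Proposition~\ref{bigprop} applied to $T$ gives $\negf-\negh-\negx\in T$ or $\negf-\negh-\negx\in\operatorname{PF}(T)$, and in either case $\negf-\negh=(\negf-\negh-\negx)+\negx\in T$, hence $\negf-\negh\in T\setminus\{\negx\}$ because $\negh\neq\negf-\negx$; this contradicts $\negh\in\operatorname{PF}(T\setminus\{\negx\})$, as it would force $\negf\in T\setminus\{\negx\}$. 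Without these two arguments your backward direction amounts to ``the count works provided the identity holds,'' which assumes what must be proved. (Note also that the type count $\operatorname{t}(T\setminus\{\negx\})=\operatorname{t}(T)+2$ needs $\negx\neq\negf-\negx$; this holds because $2\negx\in T\setminus\{\negx\}$ while $\negf\notin T$.)

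Your forward direction, routed through Proposition~\ref{sonAS}, is a legitimate shortcut (the paper argues it directly), but the step you call immediate is not a consequence of the set identity: $\operatorname{PF}(T)=\operatorname{PF}(T\setminus\{\negx\})\setminus\{\negx,\negf-\negx\}$ is perfectly consistent with $\negf-\negx\notin\operatorname{PF}(T\setminus\{\negx\})$, since removing an absent element changes nothing. What closes it is the almost-symmetry of $T\setminus\{\negx\}$, which is the standing assumption in this direction: one has $\negx\in\operatorname{SG}(T\setminus\{\negx\})\setminus\{\negf\}\subseteq\operatorname{PF}(T\setminus\{\negx\})\setminus\{\negf\}$ (your minimal-generator verification is the right one), and then vi) of Proposition~\ref{bigprop} pairs $\negx$ with $\negf-\negx$, giving $\negf-\negx\in\operatorname{PF}(T\setminus\{\negx\})$; equivalently, by v) of that proposition, $\negf-\negx\in T\setminus\{\negx\}$ is impossible since it would yield $\negf=\negx+(\negf-\negx)\in T\setminus\{\negx\}$. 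With that line added the forward direction is complete, but the backward direction remains the substantive missing piece.
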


\begin{proof}
	If $T\setminus \{\negx\}$ is almost symmetric, since $\negx$ is a minimal generator of $T$ we have $\negx \in \operatorname{PF}(T\setminus \{\negx\})$, and in particular $\negf-\negx \in \operatorname{PF}(T\setminus \{\negx\})$. Suppose that $\negf-\negx\in \operatorname{PF}(T \setminus \{\negx\})$, we prove that $\operatorname{PF}(T)=\operatorname{PF}(T\setminus \{\negx\})\setminus \{\negx,\negf-\negx\}$. Let $\negh \in \operatorname{PF}(T)$. Then $\negf-\negh \in \operatorname{PF}(T)$, so $\negh\neq \negx$ and $\negh\neq \negf-\negx$. If $\negt \in T\setminus \{\negx\}\subset T$ then $\negh+\negt\in T$ and if $\negh+\negt=\negx$ then $\negf-\negh=(\negf-\negx)+\negt \in T\setminus \{\negx\}\subset T$ since $\negf-\negx\in \operatorname{PF}(T \setminus \{\negx\})$, that is a contradiction. So $\negh+\negt\in T\setminus \{\negx\}$ and $\negh\in \operatorname{PF}(T\setminus \{\negx\})\setminus \{\negx,\negf-\negx\}$. Suppose that $\negh\in \operatorname{PF}(T\setminus \{\negx\})\setminus \{\negx,\negf-\negx\}$, in order to prove that $\negh \in \operatorname{PF}(T)$ it suffices to show that $\negh+\negx\in T$. If $\negh+\negx\notin T$ then $\negf-\negh-\negx \in T$ or $\negf-\negh-\negx \in \operatorname{PF}(T)$. In both cases $\negf-\negh\in T\setminus \{\negx\}$, that is a contradiction with $\negh\in \operatorname{PF}(T\setminus \{\negx\})$. So $\operatorname{PF}(T)=\operatorname{PF}(T\setminus \{\negx\})\setminus \{\negx,\negf-\negx\}$, in particular $\operatorname{t}(T \setminus \{\negx\})=\operatorname{t}(T)+2$ and we have $2\operatorname{g}(T \setminus \{\negx\})-\operatorname{t}(T \setminus \{\negx\})+1=2\operatorname{g}(T)-\operatorname{t}(T)+1=|\negf+\mathbf{1}|_{\times}$, that is $T \setminus \{\negx\}$ is almost symmetric.
\end{proof}

\begin{remark} Observe that the methods of adding and removing single elements in almost symmetric GNSs described respectively in Propositions \ref{sonAS} and \ref{parentAS} are somewhat inverse to each other in the sense that:
	\begin{itemize}
		\item if $\negx \in \operatorname{SG}(S)\setminus \{\negf\}$ then $\negx$ is a minimal generator of $S\cup \{\negx\}$ with $\negx<\negf$;  and
		\item if $\negx$ is a minimal generator of $T$ with $\negx<\negf$ then $\negx\in \operatorname{SG}(T\setminus \{\negx\})\setminus \{\negf\}$.
	\end{itemize}
\end{remark} 

\begin{definition}
	Let $S\subseteq \NN_0^d$ be an almost symmetric GNS with Frobenius element $\negf$ and $\prec$ be a monomial order. Considering $\operatorname{N}(S)=\{\negx \in S\mid \negx \leq \negf\}$, we define:
	\begin{itemize}
		\item[(a)] $\operatorname{D}(S)=\{\negx \in \operatorname{SG}(S)\setminus \{\negf\}\mid S\cup \{\negx\}\ \mbox{is almost symmetric}\}$;
		\item[(b)] $\operatorname{D}_{\prec}(S)=\{\negx \in \operatorname{D}(S)\mid \negx \prec \negy\ \mbox{for all}\ \negy \in (\operatorname{N}(S)\setminus \{\mathbf{0}\})\cup \{\negf\}\}$.
	\end{itemize}
	Moreover, if $S$ is not ordinary we define $\operatorname{low}_{\prec}(S)=\min_{\prec}(\operatorname{N}(S)\setminus \{\mathbf{0}\})$.
\end{definition} 

For a nonordinary GNS, the existence of an element satisfying the conditions of the Proposition~\ref{parentAS} is guaranteed by the following:

\begin{lemma}
	Let $T\subseteq \NN_0^d$ be a nonordinary almost symmetric GNS with Frobenius element $\negf$. Let $\prec$ be a monomial order and $\negx=\operatorname{low}_{\prec}(T)$. Then $\negf-\negx \in \operatorname{PF}(T\setminus \{\negx\})$, in particular $T\setminus\{\negx\}$ is almost symmetric.
	\label{xmin}
\end{lemma}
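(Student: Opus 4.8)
The plan is to verify the hypotheses of Proposition~\ref{parentAS} and then establish its right-hand condition $\negf-\negx\in\operatorname{PF}(T\setminus\{\negx\})$ directly. First I would record that $\negx=\operatorname{low}_\prec(T)$ is genuinely available: since $\negf$ is the Frobenius element of $T$, all gaps of $T$ lie $\leq\negf$, so $\NN_0^d\setminus\pi(\negf)\subseteq T$ and hence $S(\negf)\subseteq T$; nonordinarity then forces $T\supsetneq S(\negf)$, and any element of $T\setminus S(\negf)$ is a nonzero element of $T$ lying $\leq\negf$, so $\operatorname{N}(T)\setminus\{\mathbf 0\}\neq\emptyset$. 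Because $\negf\in\operatorname{H}(T)$ while $\negx\in T$, we get $\negx\neq\negf$, and together with $\negx\leq\negf$ this gives $\negx<\negf$. The whole argument rests on one feature of $\negx$: no nonzero element of $T$ lying $\leq\negf$ is strictly $\prec$-below $\negx$, because $\prec$ extends the natural partial order and $\negx$ is the $\prec$-minimum of $\operatorname{N}(T)\setminus\{\mathbf 0\}$. In particular $\negx$ is a minimal generator of $T$: a factorization $\negx=\nega+\negb$ with $\nega,\negb\in T^*$ would give $\nega<\negx$ with $\nega\in\operatorname{N}(T)\setminus\{\mathbf 0\}$ and hence $\nega\prec\negx$, a contradiction. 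Thus Proposition~\ref{parentAS} applies, and it remains to prove $\negf-\negx\in\operatorname{PF}(T\setminus\{\negx\})$.

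Next I would locate $\negf-\negx$ among the gaps of $T'=T\setminus\{\negx\}$. Since $\negx\leq\negf$ we have $\negf-\negx\in\NN_0^d$, and $\negf-\negx\notin T$, for otherwise $\negf=\negx+(\negf-\negx)\in T$ contradicts $\negf\in\operatorname{H}(T)$; hence $\negf-\negx\in\operatorname{H}(T)\subseteq\operatorname{H}(T')$. The heart of the proof is to show that for every $\negs\in(T')^*=T^*\setminus\{\negx\}$ the element $\negz=(\negf-\negx)+\negs$ lies in $T'$. I argue by contradiction, assuming $\negz\notin T'$, i.e.\ either $\negz=\negx$ or $\negz\in\operatorname{H}(T)$. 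If $\negz=\negx$ then $\negs=\negx-(\negf-\negx)$, so $\negs\leq\negx$; if instead $\negz\in\operatorname{H}(T)$ then, as $T$ is Frobenius, $\negz\leq\negf$, whence $\negf-\negz=\negx-\negs\in\NN_0^d$ and again $\negs\leq\negx$. In either case $\negs\leq\negx$ with $\negs\neq\negx$, so $\negs<\negx$ and therefore $\negs\prec\negx$; but $\negs\in T$, $\negs\neq\mathbf 0$ and $\negs\leq\negf$ place $\negs\in\operatorname{N}(T)\setminus\{\mathbf 0\}$, contradicting the $\prec$-minimality of $\negx$. Hence $\negz\in T'$ for all such $\negs$, which together with $\negf-\negx\in\operatorname{H}(T')$ yields $\negf-\negx\in\operatorname{PF}(T')$.

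With $\negx$ a minimal generator of $T$ satisfying $\negx<\negf$ and $\negf-\negx\in\operatorname{PF}(T\setminus\{\negx\})$ established, Proposition~\ref{parentAS} gives that $T\setminus\{\negx\}$ is almost symmetric, as claimed. I expect the main obstacle to be the verification that $\negz\in T'$: the delicate point is that a priori $\negz$ could fall into a gap or coincide with the removed generator $\negx$, and ruling both out simultaneously is exactly where the choice $\negx=\operatorname{low}_\prec(T)$ is essential, since it is precisely the $\prec$-minimality that converts the relation $\negs\leq\negx$ (forced in each bad case) into the forbidden $\negs\prec\negx$ inside $\operatorname{N}(T)\setminus\{\mathbf 0\}$. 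The remaining verifications (well-definedness of $\operatorname{low}_\prec(T)$, the inequality $\negx<\negf$, and minimal generation) are comparatively routine consequences of the same minimality and of $\negf$ being the unique maximal gap.
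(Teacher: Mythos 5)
Your proposal is correct and follows essentially the same route as the paper's proof: both check that $\negx=\operatorname{low}_{\prec}(T)$ is well defined and is a minimal generator with $\negx<\negf$, then prove $\negf-\negx\in \operatorname{PF}(T\setminus\{\negx\})$ using the $\prec$-minimality of $\negx$ together with the fact that all gaps lie $\leq\negf$, and finally invoke Proposition~\ref{parentAS}. The only difference is organizational: the paper verifies the pseudo-Frobenius condition directly, splitting on whether $\negt\leq\negf$ and using translation-invariance of $\prec$ to get $\negf\prec(\negf-\negx)+\negt$, whereas you argue by contradiction and reduce both failure modes ($(\negf-\negx)+\negs=\negx$ or $(\negf-\negx)+\negs\in\operatorname{H}(T)$) to the componentwise inequality $\negs\leq\negx$ — the same minimality argument read contrapositively.
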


\begin{proof}
	Since $T$ is not ordinary, the set $\{\negz \in T^*\mid \negz \leq \negf\}$ is not empty, so $\negx$ is well defined. Moreover $\negx$ is a minimal generator of $T$, otherwise $\negx=\negx_1+\negx_2$ with $\negx_1,\negx_2\in T^*$ and $\negx_1\prec\negx$, so $T\setminus \{\negx\}$ is a GNS. Let $\negy=\negf-\negx$, we prove that $\negy \in \operatorname{PF}(T\setminus \{\negx\})$. Let $\negt \in T\setminus \{\negx\}$ and observe that $\negf$ is the Frobenius element of $T\setminus \{\negx\}$. If $\negt \nleq \negf$ then $\negy+\negt\nleq \negf$, so $\negy+\negt \in T\setminus \{\negx\}$. If $\negt\leq \negf$ then $\negx\prec \negt$ and $\negf=\negy+\negx\prec \negy+\negt$, that is, $\negy+\negt \in T\setminus \{\negx\}$. Finally $T\setminus\{\negx\}$ is almost symmetric by Proposition~\ref{parentAS}.
\end{proof}


\begin{lemma}
	Let $S\subseteq \NN_0^d$ be a nonordinary almost symmetric GNS with Frobenius element $\negf$. Then there exists a sequence $S_{n}\subset S_{n-1}\subset\cdots \subset S_{1}$ of almost symmetric GNSs such that:
	\begin{itemize}
		\item $S_{n}=S(\negf)$
		\item $S_1=S$
		\item $S_{i+1}=S_{i}\setminus \{\operatorname{low}_{\prec}(S_i)\}$ for every $i \in \{1,\ldots,n-1\}$, in particular $\operatorname{low}_{\prec}(S_i)\in \operatorname{D}(S_{i+1})$.
	\end{itemize}
	\label{path}
\end{lemma}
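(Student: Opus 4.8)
The plan is to build the chain greedily by iterating the single-element removal of Lemma~\ref{xmin}, starting from $S$, and to guarantee that the process terminates exactly at the ordinary GNS $S(\negf)$ by means of a monovariant counting the elements of the current semigroup lying below $\negf$. Concretely, I would set $S_1=S$ and, as long as $S_i$ is nonordinary, define $S_{i+1}=S_i\setminus\{\operatorname{low}_\prec(S_i)\}$; the content of the lemma is then to check that this recursion is always legitimate and that it halts at $S(\negf)$.

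First I would verify that each step is admissible. The key observation is that a Frobenius GNS with Frobenius element $\negf$ is ordinary precisely when $\operatorname{N}(S_i)=\{\mathbf{0}\}$: indeed $\operatorname{N}(S_i)=S_i\cap\pi(\negf)$, and combined with $\operatorname{H}(S_i)\subseteq\pi(\negf)$ (as $\negf$ is the unique $\leq$-maximal gap) this forces $\operatorname{H}(S_i)=\pi(\negf)\setminus\{\mathbf{0}\}$, i.e. $S_i=S(\negf)$. Hence while $S_i$ is nonordinary the set $\operatorname{N}(S_i)\setminus\{\mathbf{0}\}$ is nonempty and $\operatorname{low}_\prec(S_i)$ is well-defined; since $\operatorname{low}_\prec(S_i)\in\operatorname{N}(S_i)$ and $\negf\notin S_i$, we get $\operatorname{low}_\prec(S_i)\leq\negf$ with $\operatorname{low}_\prec(S_i)\neq\negf$, so $\operatorname{low}_\prec(S_i)<\negf$. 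Lemma~\ref{xmin} then applies and gives that $S_{i+1}$ is again almost symmetric, and because the removed element is strictly below $\negf$ the element $\negf$ remains the unique $\leq$-maximal gap of $S_{i+1}$, so $S_{i+1}$ is still a Frobenius GNS with the same Frobenius element $\negf$. This keeps all hypotheses in force so the recursion can be reapplied whenever $S_{i+1}$ is still nonordinary.

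For termination I would note that, since $S_{i+1}$ and $S_i$ share the Frobenius element $\negf$ and differ only by $\operatorname{low}_\prec(S_i)\leq\negf$, one has $\operatorname{N}(S_{i+1})=\operatorname{N}(S_i)\setminus\{\operatorname{low}_\prec(S_i)\}$, whence $|\operatorname{N}(S_{i+1})|=|\operatorname{N}(S_i)|-1$. As $\operatorname{N}(S)\subseteq\pi(\negf)$ is finite, the procedure stops after finitely many steps, and it stops at the first index $n$ with $\operatorname{N}(S_n)=\{\mathbf{0}\}$; by the characterization above this $S_n$ is ordinary with Frobenius element $\negf$, that is $S_n=S(\negf)$, which is almost symmetric by Example~\ref{ex2}. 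This yields the required chain $S_n\subset S_{n-1}\subset\cdots\subset S_1=S$ of almost symmetric GNSs with the prescribed endpoints. For the final clause, I would observe that $S_{i+1}\cup\{\operatorname{low}_\prec(S_i)\}=S_i$ is a GNS, so $\operatorname{low}_\prec(S_i)\in\operatorname{SG}(S_{i+1})$; since $\operatorname{low}_\prec(S_i)\neq\negf$ and $S_i$ is almost symmetric, the definition of $\operatorname{D}$ gives $\operatorname{low}_\prec(S_i)\in\operatorname{D}(S_{i+1})$. The only genuinely delicate point is maintaining the invariant across steps: one must ensure that at each stage the current semigroup is simultaneously nonordinary (so $\operatorname{low}_\prec$ exists) and almost symmetric with unchanged Frobenius element $\negf$ (so Lemma~\ref{xmin} can be reapplied); the strictly decreasing monovariant $|\operatorname{N}(S_i)|$ together with the stopping criterion $\operatorname{N}(S_n)=\{\mathbf{0}\}$ is exactly what closes this loop.
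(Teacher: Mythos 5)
Your proposal is correct and follows essentially the same route as the paper: both build the chain greedily by iterating Lemma~\ref{xmin}, setting $S_{i+1}=S_i\setminus\{\operatorname{low}_{\prec}(S_i)\}$ until the ordinary GNS $S(\negf)$ is reached. The only difference is that you spell out details the paper leaves implicit (well-definedness of $\operatorname{low}_{\prec}$, preservation of the Frobenius element, termination via the decreasing count $|\operatorname{N}(S_i)|$, and the membership $\operatorname{low}_{\prec}(S_i)\in\operatorname{D}(S_{i+1})$), all of which are verified correctly.
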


\begin{proof}
	Let $\prec$ be a fixed monomial order. Consider $S_{1}=S$, by Lemma~\ref{xmin} if $\negx =\operatorname{low}_{\prec}(S)$ then $S\setminus \{\negx\}$ is an almost symmetric GNS with Frobenius element $\negf$. Put $S_{2}=S\setminus \{\negx\}$, that is $S_{2}\cup \{\negx\}=S_{1}$. Obviously $\negx \in \operatorname{D}(S_2)$. If $S_2$ is ordinary then we conclude, otherwise we can repeat the procedure with $S_i$, for $i\geq 2$, obtaining an almost symmetric GNS $S_{i+1}$ such that $S_{i+1}=S_{i}\setminus \{\operatorname{low}_{\prec}(S_i)\}$. The procedure stops when we obtain $S_{i}=S(\negf)$ for some $i$.
\end{proof}

For $\negf \in \NN_0^d$, we define $\mathcal{A}(\negf)$ be the set of all almost symmetric GNSs having Frobenius element $\negf$. We can define the graph $\mathcal{G}(\negf)=(\mathcal{A}(\negf), \mathcal{E})$ whose set of vertices is $\mathcal{A}(\negf)$ and $\mathcal{E}$ is the set of edges, where $(T,S)\in\mathcal{E}$ if $T=S\cup \{\negx\}$ for some $\negx \in \operatorname{SG}(S)\setminus \{\negf\}$ such that $T$ is almost symmetric.
By Lemma~\ref{path} from every almost symmetric GNS there exists a path of edges in $\mathcal{G}(\negf)$ linking it to the ordinary GNS $S(\negf)$. 
So we can consider a first procedure to generate all almost symmetric GNS with fixed Frobenius element $\negf$. Starting from $T=S(\negf)$, we can consider the almost symmetric GNSs $T_{\negx}=T\cup \{\negx\}$ for each $\negx \in \operatorname{D}(T)$, then we repeat the procedure to each semigroup $T_{\negx}$, and so on. The procedure stops when an irreducible GNS is obtained since, according to Proposition \ref{maxirreducible}, they are maximal with respect to inclusion among the Frobenius GNSs with a fixed Frobenius element. Hence all almost symmetric are produced by the procedure. Observe however that, building $\mathcal{G}(\negf)$ by this method, some redundancies can be produced, which means that some almost symmetric GNSs can be obtained more than one time, as in the following example:

\begin{example}
	Let $\mathbf{f}=(2,1)$ and $S(\negf)=\NN_0^2\setminus \{(0,1),(1,0),(1,1),(2,0),(2,1)\}$.\\
	Following Proposition~\ref{sonAS}, from $S(\negf)$ we obtain the following almost symmetric GNS:
	\begin{itemize}
		\item $S_1^{(1)}=S(\negf) \cup \{(2,0)\}=\NN_0^2\setminus \{(0,1),(1,0),(1,1),(2,1)\}$.
		\item $S_1^{(2)}=S(\negf) \cup \{(1,1)\}=\NN_0^2\setminus \{(0,1),(1,0),(2,0),(2,1)\}$.
	\end{itemize}
	Applying again Proposition~\ref{sonAS} in the above GNS we obtain:
	\begin{itemize}
		\item $S_2^{(1)}=S_1^{(1)} \cup \{(1,1)\}=S_1^{(2)} \cup \{(2,0)\}=\NN_0^2\setminus \{(0,1),(1,0),(2,1)\}$.
		\item $S_2^{(2)}=S_1^{(1)} \cup \{(1,0)\}=\NN_0^2\setminus \{(0,1),(1,1),(2,1)\}$.
		\item $S_2^{(3)}=S_1^{(2)} \cup \{(0,1)\}=\NN_0^2\setminus \{(1,0),(2,0),(2,1)\}$.
	\end{itemize}
	The three semigroups above are all irreducible (they are symmetric in particular). In Figure~\ref{graph} the graph $\mathcal{G}((2,1))$ is pictured.
	
	\begin{figure}[h!]
		\includegraphics[scale=0.7]{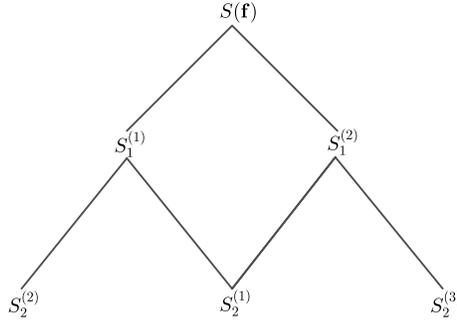}
		\caption{The graph $\mathcal{G}(\negf)$ of all almost symmetric GNS with Frobenius element $(2,1)$.}
		\label{graph}
	\end{figure}
	
	In particular we obtain the following different chains of almost symmetric GNS with Frobenius element $\negf=(2,1)$:
	\begin{itemize}
		\item $S(\negf) \subset S_1^{(1)}\subset S_2^{(2)}$.
		\item $S(\negf) \subset S_1^{(1)}\subset S_2^{(1)}$.
		\item $S(\negf) \subset S_1^{(2)}\subset S_2^{(1)}$.
		\item $S(\negf) \subset S_1^{(2)}\subset S_2^{(3)}$.
	\end{itemize}
	\label{exaGraph}
\end{example}

Notice that, starting from an almost symmetric GNS $(S,\mathbf{f})$, the aforementioned procedure to obtain almost symmetric GNSs by adding special gaps always reaches an irreducible GNS $T$ having Frobenius element $\mathbf{f}$. In this case, $T=S\cup A$, where $A$ is a set of pseudo-Frobenius elements of $S$ satisfying 
$$\operatorname{PF}(S)=\operatorname{PF}(T) \cup \{\mathbf{x}, \mathbf{f}-\mathbf{x} \mid  \mathbf{x}\in A\}.$$ Conversely, if $S=T\setminus A$ for $T$ an irreducible GNS and $A$ a subset of $T$ satisfying the above relation, then $S$ is necessarily an almost symmetric GNS (by counting the genus and the type of the semigroups involved, that is, by Definition~\ref{ASdef}). Moreover, starting from an almost symmetric GNS $S$, it is possible to take an irreducible GNS $T$ with $S=T\setminus A$ in such a way that the elements in $A$ are actually minimal generators of $T$. In particular, we obtain the following:

\begin{theorem} \label{asirred} Let $(S,\mathbf{f})$ be a Frobenius GNS in $\NN_0^d$. Then $S$ is almost symmetric if and only if there exists an irreducible GNS $T$ having Frobenius element $\negf$ such that $S=T\backslash A$, where $A$ is a subset of minimal generators of $T$ satisfying 
	$$\operatorname{PF}(S)=\operatorname{PF}(T) \cup \{\mathbf{x}, \mathbf{f}-\mathbf{x} \mid \mathbf{x}\in A\}.$$ In this case, $\operatorname{t}(S)=2|A|+\operatorname{t}(T)$.
\end{theorem}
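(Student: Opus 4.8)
The plan is to prove the two implications separately: the backward direction is a genus–type count via Definition~\ref{ASdef}, while the forward direction is an induction on the type built on the special-gap operations of Propositions~\ref{sonAS} and \ref{parentAS}.

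For the implication $(\Leftarrow)$, suppose $S=T\setminus A$ with $T$ irreducible with Frobenius element $\negf$ and $A$ a set of minimal generators satisfying the displayed pseudo-Frobenius relation. First I would check that $S$ is genuinely a GNS: if $\nega+\negb\in A$ with $\nega,\negb\in T$, then minimality of the generator $\nega+\negb$ forces $\nega=\mathbf{0}$ or $\negb=\mathbf{0}$, so $T\setminus A$ is a submonoid with $\operatorname{H}(S)=\operatorname{H}(T)\sqcup A$ and hence $\operatorname{g}(S)=\operatorname{g}(T)+|A|$. The relation then yields $\operatorname{t}(S)=\operatorname{t}(T)+2|A|$ once one verifies the listed pieces are pairwise disjoint: for $\negx\in A$ one has $\negf-\negx\notin\operatorname{PF}(T)$ and $\negf-\negx\notin A$ (otherwise $\negx\in T^*$ would force $\negf\in T$), and $\negx\neq\negf-\negx$ (otherwise $\negf=2\negx\in T$). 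Since $T$ is irreducible, it is almost symmetric, so $2\operatorname{g}(T)+1-\operatorname{t}(T)=|\negf+\mathbf{1}|_\times$ by Proposition~\ref{remarkFrobenius}; substituting the two counts gives $2\operatorname{g}(S)+1-\operatorname{t}(S)=2\operatorname{g}(T)+1-\operatorname{t}(T)=|\negf+\mathbf{1}|_\times$, so $S$ is almost symmetric by Definition~\ref{ASdef}. This same computation proves the final formula $\operatorname{t}(S)=2|A|+\operatorname{t}(T)$.

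For the implication $(\Rightarrow)$, fix the monomial order $\prec$ and induct on $\operatorname{t}(S)=t$. If $S$ is irreducible take $T=S$, $A=\emptyset$; otherwise $t\geq 3$, and writing $\operatorname{PF}(S)\setminus\{\negf\}=\{\negx_1\prec\cdots\prec\negx_{t-1}\}$ Proposition~\ref{bigprop}(vi) gives $\negx_i+\negx_{t-i}=\negf$. I would take $\negx=\negx_{t-1}$ and first show $\negx\in\operatorname{SG}(S)\setminus\{\negf\}$: if $2\negx\notin S$ then $2\negx$ is a gap, so $2\negx\leq\negf$, forcing $\negx\leq\negx_1$ and hence (as $\prec$ refines $\leq$) $\negx=\negx_1$, i.e. $t=2$, a contradiction. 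Put $S'=S\cup\{\negx\}$, a GNS with Frobenius element $\negf$ and $\operatorname{g}(S')=\operatorname{g}(S)-1$. The key step is $\operatorname{PF}(S')=\operatorname{PF}(S)\setminus\{\negx,\negx_1\}$, where $\negx_1=\negf-\negx$. The inclusion $\supseteq$ is direct: for $\negh\in\operatorname{PF}(S)\setminus\{\negx,\negx_1\}$ one has $\negh+\negx\in S$, since otherwise $\negh+\negx$ is a gap $\leq\negf$, giving $\negh\leq\negx_1$ and thus $\negh=\negx_1$. For the reverse inclusion I would avoid an element-wise argument and instead apply Proposition~\ref{ASformula} to $S'$ at $\negh=\negf$, obtaining $\operatorname{t}(S')\leq t-2$; together with $\supseteq$ this forces equality of the two sets. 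Then $2\operatorname{g}(S')+1-\operatorname{t}(S')=2\operatorname{g}(S)+1-\operatorname{t}(S)=|\negf+\mathbf{1}|_\times$, so $S'$ is almost symmetric of type $t-2$, and the inductive hypothesis gives an irreducible $T$ with $S'=T\setminus A'$; setting $A=A'\cup\{\negx\}$ gives $S=T\setminus A$, and the pseudo-Frobenius relation for $S$ follows by adding the pair $\{\negx,\negx_1\}$ back (Proposition~\ref{sonAS}).

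The main obstacle is showing that $A$ consists of minimal generators of $T$. Along the construction each added element $\negx_{t-1}$ is $\prec$-larger than its removed partner $\negx_1$, so $A$ is exactly the set of upper representatives $\{\negz\in\operatorname{PF}(S)\setminus\{\negf\}\mid \negz\succ\negf-\negz\}$, equivalently those with $2\negz\succ\negf$. I would then prove minimality directly: if $\negz\in A$ and $\negz=\nega+\negb$ with $\nega,\negb\in T^*$, then $\nega,\negb$ cannot both lie in $S$ (else $\negz\in S$), and $\leq_S$-maximality of pseudo-Frobenius elements (Proposition~\ref{pseudofrobeniusmaximal}) forces the remaining factor out of $S$ as well, so $\nega,\negb\in A$; but $2\nega\succ\negf$ and $2\negb\succ\negf$ give $2(\nega+\negb)\succ 2\negf$, whence $\negz=\nega+\negb\succ\negf$, contradicting that the gap $\negz$ satisfies $\negz\preceq\negf$. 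This monomial-order inequality is the heart of the matter. The remaining bookkeeping — that the procedure terminates precisely at an irreducible GNS — is automatic, since the type strictly decreases by $2$ at each step and irreducible GNSs are the maximal Frobenius GNSs with Frobenius element $\negf$ (Proposition~\ref{maxirreducible}).
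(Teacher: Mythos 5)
Your proof is correct, but its architecture differs from the paper's, so a comparison is worth recording. The paper proves the forward direction in one shot: using Proposition~\ref{bigprop}~vi) it writes $\operatorname{PF}(S)$ as $\{\negf\succ\negh_1\succ\cdots\succ\negh_r\succ(\text{possibly }\negf/2\succ)\,\negf-\negh_r\succ\cdots\succ\negf-\negh_1\}$, sets $A=\{\negh_1,\ldots,\negh_r\}$ and $T=S\cup A$ directly, checks that $T$ is a monoid and that $A$ consists of minimal generators by exactly the three-case analysis you give (the case of two summands from $A$ being excluded by the same monomial-order inequality $\negh_i+\negh_j\succ\negf$ that you use), and only at the end proves irreducibility of $T$ by a single count: $2\operatorname{g}(T)+1-e=|\negf+\mathbf{1}|_\times$ with $e\in\{1,2\}$, so Proposition~\ref{ASformula} forces $\operatorname{t}(T)\leq e$, whence $\operatorname{PF}(T)=\{\negf\}$ or $\{\negf,\negf/2\}$. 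You instead induct on the type, adjoining one special gap at a time; your application of Proposition~\ref{ASformula} to $S'=S\cup\{\negx_{t-1}\}$ at the gap $\negf$, giving $\operatorname{t}(S')\leq t-2$ and hence $\operatorname{PF}(S')=\operatorname{PF}(S)\setminus\{\negx_{t-1},\negx_1\}$ by cardinality alone, is a genuinely slick shortcut: it replaces the element-wise verification that the paper performs separately in Proposition~\ref{sonAS}. What your induction buys is that the monoid property and the irreducibility of $T$ come for free from the base case; what it costs is bookkeeping that your sketch only gestures at: the bare inductive hypothesis supplies \emph{some} pair $(T,A')$, and nothing in it makes $\negx_{t-1}$ a minimal generator of that $T$, so a careful write-up must strengthen the inductive statement to assert $T=S\cup A$ with $A=\{\negz\in\operatorname{PF}(S)\setminus\{\negf\}\mid 2\negz\succ\negf\}$ --- which is exactly what you do when you identify $A$ ``along the construction'', after which your direct minimality argument (in substance the paper's) closes the loop. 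Two points deserve explicit justification in a final version: in the inclusion $\supseteq$, the step ``$\negh\leq\negx_1$, hence $\negh=\negx_1$'' relies on $\prec$ refining $\leq$ together with the $\prec$-minimality of $\negx_1$ in $\operatorname{PF}(S)\setminus\{\negf\}$ (pseudo-Frobenius elements can well be $\leq$-comparable, as in Example~\ref{ex1}, so this is not automatic); and the disjointness checks in your backward direction are indeed what turns the displayed union into the formula $\operatorname{t}(S)=2|A|+\operatorname{t}(T)$ --- the paper leaves that count to the reader.
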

\begin{proof}
	$(\Leftarrow)$ It follows immediately from Definition~\ref{ASdef}.\\
	$(\Rightarrow)$ Let $S$ be an almost symmetric GNS with Frobenius element $\negf$. Hence, by Proposition~\ref{bigprop}, if $\prec$ is a monomial order in $\NN_0^d$, depending on the parity of $\operatorname{t}(S)$, we can decompose $\operatorname{PF}(S)$ in one of the following forms 
	$$\{\negf \succ \negh_1\succ\cdots\succ\negh_r\succ \negf-\negh_r\succ \cdots\succ \negf-\negh_1\}$$
	and 
	$$\{\negf \succ \negh_1\succ\cdots\succ\negh_r\succ \negf/2\succ \negf-\negh_r\succ \cdots\succ \negf-\negh_1\}.$$
	Hence, consider $A=\{\negh_i \mid i=1,\ldots,r\}$. Notice that for all $i,j\in \{1,\ldots,r\}$, we have that $\negh_i+\negh_j\in S$, in fact $\negh_i+\negh_j\succ (\negf-\negh_j)+\negh_j=\negf$.
	Furthermore, we have $\operatorname{t}(S)=2|A|+e$, where $e=1,2$. In particular $e=1$ in the first case, $e=2$ in the second case.\\
	Observe that $T=S\cup A$ is a Frobenius GNS with Frobenius element $\negf$. In fact, by the above observation and considering that $A\subset \operatorname{PF}(S)$, if $\negx,\negy\in T$ then $\negx+\negy\in S\subseteq T$. Moreover the complement of $T$ in $\NN^d_0$ is finite since is contained in $\operatorname{H}(S)$. Let us show that $A$ is a set of minimal generators of $T$. If $\negh_i=\negx+\negy$, for $\negx, \negy\in T^*$, then we have only the following possibilities:
	\begin{itemize}
		\item $\negx, \negy\in S$ that implies $\negh_i\in S$, contradicting $\negh_i\in \operatorname{PF}(S)$;
		\item $\negx, \negy\in A$ that implies $\negh_i\in S$, contradicting again $\negh_i\in \operatorname{PF}(S)$;
		\item $\negx\in A$ and $\negy\in S$, and since $\negh_i\in \operatorname{H}(S)$, we have $\negx\not\in \operatorname{PF}(S)$ and so $\negx\not \in A$.
	\end{itemize}
	So it remains to prove that $T$ is irreducible. As $S$ is almost symmetric, we have $2 \operatorname{g}(S)+1-\operatorname{t}(S)=|\negf+\mathbf{1}|_\times.$ Hence, since $\operatorname{g}(T)=\operatorname{g}(S)-|A|$, we obtain $$2 \operatorname{g}(T)+1-e=2 \operatorname{g}(T)+1-(\operatorname{t}(S)-2|A|)=2 \operatorname{g}(S)+1- \operatorname{t}(S)=|\negf+\mathbf{1}|_\times,$$
	and as $2 \operatorname{g}(T)+1- \operatorname{t}(T)\geq |\negf+\mathbf{1}|_\times$, it follows that $1\leq \operatorname{t}(T)\leq e$. If $e=1$ then $\operatorname{PF}(T)=\{\negf\}$, so $T$ is irreducible. If $e=2$ observe that $\{\negf,\negf/2\}\subseteq \operatorname{PF}(T)$, and since $\operatorname{t}(T)\leq 2$ we have $\operatorname{PF}(T)=\{\negf,\negf/2\}$. In particular, $T$ is irreducible with $\operatorname{t}(T)=\operatorname{t}(S)-2|A|$. It is easy to see that in each case $\operatorname{PF}(S)=\operatorname{PF}(T) \cup \{\mathbf{x}, \mathbf{f}-\mathbf{x} \mid \mathbf{x}\in A\}$.
\end{proof}

The condition  $\operatorname{PF}(S)=\operatorname{PF}(T) \cup \{\mathbf{x}, \mathbf{f}-\mathbf{x} \mid \mathbf{x}\in A\}$ in the above theorem can be formulated into a more computational way as follows, which makes it simpler to be verified. This fact allows us to bring to the GNSs an extension of the description given in \cite{construction} for almost symmetric numerical semigroups regarding irreducible numerical semigroups.

\begin{proposition}
Let $(T,\negf)$ be an irreducible GNS and $A$ a subset of minimal generators of $T$ smaller than $\negf$ with respect to partial order $\leq$. Considering the Frobenius GNS $S=T\setminus A$, then $\operatorname{PF}(S)=\operatorname{PF}(T) \cup \{\mathbf{x}, \mathbf{f}-\mathbf{x} \mid \mathbf{x}\in A\}$ if and only if $\negx+\negy-\negf\notin S$  for all $\negx,\negy \in A$.
\end{proposition}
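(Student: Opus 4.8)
The plan is to compare $\operatorname{PF}(S)$ with the candidate set $P=\operatorname{PF}(T)\cup\{\negx,\negf-\negx\mid \negx\in A\}$ and to reduce the equality $\operatorname{PF}(S)=P$ to an inclusion that can be tested one element at a time. First I would record the structural facts that hold for any such $T$ and $A$: since $A$ consists of minimal generators, $\operatorname{H}(S)=\operatorname{H}(T)\sqcup A$ and $\operatorname{g}(S)=\operatorname{g}(T)+|A|$; moreover every $\negx\in A$ lies in $\operatorname{PF}(S)$ (a minimal generator cannot be written as a sum of two nonzero elements, so $\negx+\negs\notin A$ for $\negs\in S^*$), while $\negf\in\operatorname{PF}(S)$ and each $\negf-\negx$ with $\negx\in A$ lies in $\operatorname{H}(T)\subseteq\operatorname{H}(S)$ (because $\negx\in T$ and $\negx\neq\negf$, so $\negf-\negx\notin T$). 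Using Proposition~\ref{ASformula} for $S$ together with Theorem~\ref{irreducible} for $T$ (which gives $|\negf+\mathbf{1}|_\times=2\operatorname{g}(T)+1-\operatorname{t}(T)$), one obtains $\operatorname{t}(S)\le 2\operatorname{g}(S)+1-|\negf+\mathbf{1}|_\times=\operatorname{t}(T)+2|A|=|P|$. Since the members of $P$ are pairwise distinct, this shows that $\operatorname{PF}(S)=P$ holds if and only if $P\subseteq\operatorname{PF}(S)$.

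The second step is the key computation: for a fixed $\negx\in A$, deciding when $\negf-\negx\in\operatorname{PF}(S)$, i.e. when $(\negf-\negx)+\negs\in S$ for every $\negs\in S^*$. If $\negs\not\le\negx$ then $(\negf-\negx)+\negs\not\le\negf$, so it is neither a gap of $T$ nor an element of $A$ and hence lies in $S$. If $\negs\le\negx$ then $\negs\neq\negx$ (as $\negx\notin S^*$), and minimality of $\negx$ forces $\negz:=\negx-\negs\in\operatorname{H}(T)$; writing $(\negf-\negx)+\negs=\negf-\negz$ and invoking irreducibility of $T$ through Proposition~\ref{differenceFrobenius}, this element fails to lie in $S$ exactly when $2\negz=\negf$ or $\negf-\negz\in A$. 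Translating back, the two obstructions are $\negs=\negx-\negf/2$ and $\negs=\negx+\negy-\negf$ for some $\negy\in A$. The hypothesis $\negx+\negy-\negf\notin S$ for all $\negx,\negy\in A$ removes the second obstruction directly, and it removes the first as well: if $\negx-\negf/2\in S^*$ then $2\negx-\negf=2(\negx-\negf/2)\in S$, contradicting the hypothesis with $\negy=\negx$. The same bound $2\negx-\negf\notin S$ also yields $\negf/2\in\operatorname{PF}(S)$ in the pseudo-symmetric case, since the only way $\negf/2+\negs$ could leave $S$ is by landing in $A$, and $\negf/2+\negs=\negy\in A$ would force $2\negy-\negf=2\negs\in S$. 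Hence the hypothesis gives $P\subseteq\operatorname{PF}(S)$, and combined with the first step, $\operatorname{PF}(S)=P$.

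For the converse I would argue by contrapositive, which is short: if $\negx+\negy-\negf\in S$ for some $\negx,\negy\in A$, then (noting $\negx+\negy\neq\negf$, because $\negf-\negy\in\operatorname{H}(T)$ while $\negx\in T$) the element $\negs:=\negx+\negy-\negf$ lies in $S^*$ and satisfies $(\negf-\negy)+\negs=\negx\in A$, so $\negf-\negy\notin\operatorname{PF}(S)$; since $\negf-\negy\in P$, this forces $\operatorname{PF}(S)\neq P$. The main obstacle is the bookkeeping in the second step: reducing correctly to the sub-case $\negs\le\negx$, exploiting minimality of the generators to place $\negx-\negs$ among the gaps of $T$, and handling the value $\negf/2$ separately so that the single algebraic condition $\negx+\negy-\negf\notin S$ captures every way an element of $P$ could fail to be pseudo-Frobenius.
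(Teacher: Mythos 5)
Your proof is correct, and its local computations largely coincide with the paper's: you show $A\subseteq\operatorname{PF}(S)$ and $\negf\in\operatorname{PF}(S)$ unconditionally, you dispose of $\negf/2$ and of each $\negf-\negx$ using exactly the same three ingredients as the paper (the hypothesis $\negx+\negy-\negf\notin S$, minimality of the generators in $A$, and irreducibility of $T$ via Proposition~\ref{differenceFrobenius}), and your contrapositive for the forward implication is the paper's one-line argument read backwards. The genuine difference is how you obtain the inclusion $\operatorname{PF}(S)\subseteq P$: the paper proves it directly (for $\negy\in\operatorname{PF}(S)\setminus(\operatorname{PF}(T)\cup A)$, Proposition~\ref{differenceFrobenius} gives $\negf-\negy\in T$, while $\negy\in\operatorname{PF}(S)$ forces $\negf-\negy\in\operatorname{H}(S)$, hence $\negf-\negy\in T\setminus S=A$), whereas you replace this step by a counting argument: Proposition~\ref{ASformula} applied at $\negf\in\operatorname{H}(S)$, together with Theorem~\ref{irreducible} for $T$ and $\operatorname{g}(S)=\operatorname{g}(T)+|A|$, gives $\operatorname{t}(S)\le\operatorname{t}(T)+2|A|=|P|$, so the single inclusion $P\subseteq\operatorname{PF}(S)$ already forces equality. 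This is a legitimate and arguably cleaner way to close: it trades a second application of irreducibility for genus/type bookkeeping, it is in the same spirit as the counting that proves the $(\Leftarrow)$ direction of Theorem~\ref{asirred}, and it re-derives as a by-product that $S$ is almost symmetric. The one thing you should spell out is the pairwise distinctness of the elements of $P$, which your count needs: $\negx\neq\negf-\negy$ because $\negx+\negy\in T$ while $\negf\notin T$; $A\cap\operatorname{PF}(T)=\emptyset$ because $A\subseteq T$ while $\operatorname{PF}(T)\subseteq\operatorname{H}(T)$; and $\negf-\negx\notin\operatorname{PF}(T)$ because $\negf-\negx=\negf/2$ would give $\negx=\negf/2\in\operatorname{H}(T)$. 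These are one-line checks, so the omission is cosmetic rather than a gap.
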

\begin{proof} Given $\negx, \negy\in A$, it follows by assumption that $\negx,\negf-\negy\in \operatorname{PF}(S)$ and consequently $\negx+\negy-\negf=\negx-(\negf-\negy)\not\in S$. Conversely,  note that $A \subseteq \operatorname{PF}(S)$, since for any $\negs\in S^*$ and $\negx\in A$, we obtain $\negx+\negs\in T$ but $\negx+\negs\not\in A$. Furthermore $\operatorname{PF}(T)\subseteq \operatorname{PF}(S)$, in fact it is trivial $\negf \in \operatorname{PF}(S)$ and if $\negf/2+\negs \notin S$ for some $s\in S^*$ then $\negf/2+\negs=\negx$ for some $\negx \in A$, but in this case $2\negs=2x-\negf \in S$, that is a contradiction.	Now, as $\operatorname{PF}(T)\cup A\subseteq \operatorname{PF}(S)$, if $\negy\in \operatorname{PF}(S)\backslash (\operatorname{PF}(T)\cup A)$, being $T$ irreducible, then $\negf-\negy\in T$ by Proposition \ref{differenceFrobenius}. But $\negf-\negy\in \operatorname{H}(S)$ since $\negy\in \operatorname{PF}(S)$. So, $\negf-\negy\in T\backslash S=A$. Therefore, $\negf-\negy=\negx$ for some $\negx\in A$, and then $\negy=\negf-\negx$, which proves that $ \operatorname{PF}(S)\subseteq  \operatorname{PF}(T) \cup \{\mathbf{x}, \mathbf{f}-\mathbf{x} \mid \mathbf{x}\in A\}$. Hence, it suffices to prove that $\negf-\negx\in \operatorname{PF}(S)$ for all $\negx\in A$. 
Observe that $\negf-\negx\in \operatorname{H}(T)$ and thus $\negf-\negx\in  \operatorname{H}(S)$. Now, assume that $\negf-\negx+\negs\in \operatorname{H}(S)$ for some $\negs\in S^*$. If  $\negf-\negx+\negs\in T$, then $\negf-\negx+\negs=\negy\in A$ and so $\negx+\negy-\negf\in S$, which is a contradiction. Hence, $\negf-\negx+\negs\in \operatorname{H}(T)$. Being $T$ irreducible, again by Proposition~\ref{differenceFrobenius}, we have either $\negf-\negx+\negs=\negf/2$ or $\negx-\negs=\negf-(\negf-\negx+\negs)\in T$. The first relation implies that $2\negx-\negf=2(\negx-\negf/2)=2(\negf+\negs)\in S$, contrary to the assumption. As $\negx$ is a minimal generator of $T$, the second one implies that $\negs=\negx$, which is not possible because $S=T\setminus A$. Therefore, $\{\negf-\negx \mid \negx\in A\}\subseteq \operatorname{PF}(S)$, which establishes that $\operatorname{PF}(S)=\operatorname{PF}(T) \cup \{\mathbf{x}, \mathbf{f}-\mathbf{x} \mid \mathbf{x}\in A\}$.
\end{proof}

In order to have a procedure that generates all almost symmetric GNS without redundancies we can define a subgraph of $\mathcal{G}(\negf)$ that is a \emph{tree}.

\begin{definition}
	Let $\prec$ be a monomial order, $\negf\in \NN_0^d$ and $\mathcal{A}(\negf)$ be the set of all almost symmetric GNS having Frobenius element $\negf$.\\ We define $\mathcal{G}_{\prec}(\negf)=(\mathcal{A}(\negf), \mathcal{E}_{\prec})$ as the graph whose set of vertices is $\mathcal{A}(\negf)$ and the set of edges is  $\mathcal{E}_{\prec}$, where $(T,S)\in\mathcal{E}_{\prec}$ if $S=T\setminus \{\operatorname{low}_{\prec}(T)\}$.
	If $(T,S)\in \mathcal{E}_{\prec}$ we say that $T$ is a \emph{child} of $S$. 
	\label{defTree}
\end{definition}

Observe that $\mathcal{G}_{\prec}(\negf)$ is a subgraph of $\mathcal{G}(\negf)$, in fact: if $(T,S)\in \mathcal{E}_{\prec}$ then $S=T\setminus \{\operatorname{low}_{\prec}(T)\}$, that is, $S\cup \{\operatorname{low}_{\prec}(T)\}=T$, and so $(T,S)\in \mathcal{E}$. 

\begin{theorem}
	Let $\prec$ be a monomial order and $\negf\in \NN_0^d$. Then $\mathcal{G}_{\prec}(\negf)$ is a rooted tree whose root is $S(\negf)$. Moreover if $S\in \mathcal{A}(\negf)$, all the children of $S$ are the semigroups $S\cup \{\negx\}$ for all $\negx \in \operatorname{D}_{\prec}(S)$.
	\label{thmTree}
\end{theorem}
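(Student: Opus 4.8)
The plan is to read $\mathcal{G}_{\prec}(\negf)$ as the graph of a single-valued \emph{parent} operation and verify the tree axioms against it. For a nonordinary $T\in\mathcal{A}(\negf)$ the element $\operatorname{low}_{\prec}(T)=\min_{\prec}(\operatorname{N}(T)\setminus\{\mathbf{0}\})$ is well-defined and unique, and Lemma~\ref{xmin} guarantees that $T\setminus\{\operatorname{low}_{\prec}(T)\}$ is again an almost symmetric GNS with Frobenius element $\negf$, hence lies in $\mathcal{A}(\negf)$; call it the parent of $T$. By the defining relation $(T,S)\in\mathcal{E}_{\prec}\iff S=T\setminus\{\operatorname{low}_{\prec}(T)\}$, each $T$ is the child in exactly one edge, so the parent is unique. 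I would first record that the only ordinary member of $\mathcal{A}(\negf)$ is $S(\negf)$ itself (an ordinary GNS $S(\negf')$ has Frobenius element $\negf'$, so it belongs to $\mathcal{A}(\negf)$ precisely when $\negf'=\negf$), whence $S(\negf)$ is the unique vertex with $\operatorname{N}=\{\mathbf{0}\}$ and therefore the unique vertex possessing no parent.

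To conclude that the graph is a rooted tree, I would exhibit a strictly monotone invariant along parent edges. Since every $T\in\mathcal{A}(\negf)$ is a Frobenius GNS with Frobenius element $\negf$ (Proposition~\ref{remarkFrobenius}), all of its gaps satisfy $\leq\negf$, so $\operatorname{LH}(\negf)=\operatorname{H}(T)$ and $|\operatorname{N}(T)|=|\negf+\mathbf{1}|_\times-\operatorname{g}(T)$. Passing to the parent removes one element $\leq\negf$, increasing the genus by one, hence $|\operatorname{N}(\cdot)|$ strictly decreases by one. This positive, strictly decreasing integer invariant forbids cycles and forces the iterated parent of any vertex to reach the unique parentless vertex $S(\negf)$; that each vertex is in fact connected to $S(\negf)$ by such a chain is exactly Lemma~\ref{path}. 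A connected, acyclic graph in which every vertex other than one distinguished vertex has a unique parent is a rooted tree with that vertex as root, which is the first assertion.

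For the children, I would prove the equivalence: $T$ is a child of $S$ if and only if $T=S\cup\{\negx\}$ for some $\negx\in\operatorname{D}_{\prec}(S)$. Writing $T=S\cup\{\negx\}$, the fact that $\negx$ is a gap of the Frobenius GNS $S$ gives $\negx<\negf$ and $\operatorname{N}(T)\setminus\{\mathbf{0}\}=(\operatorname{N}(S)\setminus\{\mathbf{0}\})\cup\{\negx\}$. If $T$ is a child of $S$, then $T$ is almost symmetric with $\negx\neq\negf$, so $\negx\in\operatorname{D}(S)$, and $S=T\setminus\{\operatorname{low}_{\prec}(T)\}$ forces $\negx=\operatorname{low}_{\prec}(T)=\min_{\prec}(\operatorname{N}(T)\setminus\{\mathbf{0}\})$; this minimality, together with $\negx<\negf\Rightarrow\negx\prec\negf$ (monomial orders refine $\leq$), yields $\negx\prec\negy$ for all $\negy\in(\operatorname{N}(S)\setminus\{\mathbf{0}\})\cup\{\negf\}$, i.e.\ $\negx\in\operatorname{D}_{\prec}(S)$. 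Conversely, if $\negx\in\operatorname{D}_{\prec}(S)$, then $\negx\in\operatorname{D}(S)$ makes $T=S\cup\{\negx\}$ an almost symmetric GNS still having $\negf$ as its unique maximal gap, so $T\in\mathcal{A}(\negf)$; and the inequality $\negx\prec\negy$ on $\operatorname{N}(S)\setminus\{\mathbf{0}\}$ identifies $\negx$ as $\min_{\prec}(\operatorname{N}(T)\setminus\{\mathbf{0}\})=\operatorname{low}_{\prec}(T)$, whence $S=T\setminus\{\operatorname{low}_{\prec}(T)\}$ and $T$ is a child of $S$. As distinct $\negx$ give distinct $T$, the children of $S$ are exactly the $S\cup\{\negx\}$ with $\negx\in\operatorname{D}_{\prec}(S)$.

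The routine ingredients — well-definedness of the parent map and the computation of $|\operatorname{N}|$ — are already furnished by Lemmas~\ref{xmin} and~\ref{path} and the Frobenius property, so the hard part will be the bookkeeping in the children characterization: one must carefully check that enlarging $S$ by a single special gap $\leq\negf$ alters $\operatorname{N}$ only by adjoining $\negx$, and that the inequality packaged into the definition of $\operatorname{D}_{\prec}(S)$ is precisely what makes $\negx$ the $\prec$-least nonzero element of $\operatorname{N}(T)$. This is exactly where the refinement from $\operatorname{D}(S)$ to $\operatorname{D}_{\prec}(S)$ is indispensable, since it is what removes the redundancies exhibited in Example~\ref{exaGraph} and makes the parent edges assemble into a tree rather than the larger graph $\mathcal{G}(\negf)$.
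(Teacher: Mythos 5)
Your proof is correct and takes essentially the same route as the paper: reachability of the root via Lemma~\ref{path} (hence Lemma~\ref{xmin}), tree structure from the single-valuedness of the parent operation $T\mapsto T\setminus\{\operatorname{low}_{\prec}(T)\}$, and the children characterization by unwinding the definition of $\operatorname{D}_{\prec}(S)$. If anything, you are slightly more thorough than the paper, which establishes acyclicity implicitly through a uniqueness-of-path argument rather than your explicit monotone invariant $|\operatorname{N}(\cdot)|$, and proves only the forward inclusion of the children statement, leaving the converse (that each $S\cup\{\mathbf{x}\}$ with $\mathbf{x}\in\operatorname{D}_{\prec}(S)$ is indeed a child) to the reader.
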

\begin{proof}
	Let $S\in \mathcal{A}(\negf)$. As in the proof of Lemma~\ref{path} we can produce a chain of semigroups $S_{n}\subset S_{n-1}\subset\cdots \subset S_{1}$ such that $S_1=S$, $S_n=S(\negf)$ and $S_{i+1}=S_{i}\setminus \{\operatorname{low}_{\prec}(S_i)\}$. In particular $(S_1,S_2),(S_2,S_3),\ldots,(S_{n-1},S_n)$ is a path of edges of $\mathcal{G}_{\prec}(\negf)$ from $S$ to $S(\negf)$. If there exists another path, then for some $i\in \{1,\ldots,n\}$ there exist two different semigroups $T_1,T_2\in \mathcal{A}(\negf)$ such that $(S_i,T_1),(S_i,T_2)\in \mathcal{E}_{\prec}$, in particular $T_1=S\setminus \{\operatorname{low}_{\prec}(S_i)\}=T_2$, that is a contradiction. So we conclude that $\mathcal{G}_{\prec}(\negf)$ is a rooted tree whose root is $S(\negf)$. If $T$ is a child of $S$ then $S=T\setminus\{\operatorname{low}_{\prec}(T)\}$, that is $T= S\cup \{\operatorname{low}_{\prec}(T)\}$. In particular $\operatorname{low}_{\prec}(T)\in \operatorname{D}(S)$, and if $\negy \in \operatorname{N}(S)\cup \{\negf\}$ then $\negy\in \operatorname{N}(T)\cup \{\negf\}$, so $\operatorname{low}_{\prec}(T)\prec \negy$. Therefore $\operatorname{low}_{\prec}(T)\in \operatorname{D}_{\prec}(S)$ and this proves the second statement.
\end{proof}

Let $\prec$ be a monomial order in $\NN_0^d$. We define $\mathcal{F}(S)=\{S\cup \{\negx\}\mid \negx \in \operatorname{D}_{\prec}(S)\}$. By the above theorem, in order to produce all almost symmetric GNS with fixed Frobenius element $\negf$ we can consider the following steps:
\begin{enumerate}
	\item $V_1=\{S(\negf)\}$
	\item $V_{i}=\bigcup_{S\in V_{i-1}}\mathcal{F}(S)$
\end{enumerate}
Observe that for every $T\in \mathcal{F}(S)$ we have $\operatorname{g}(T)=\operatorname{g}(S)-1$ and $\operatorname{t}(T)=\operatorname{t}(S)-2$. Furthermore, the procedure stops when all semigroups in $V_{i}$ are irreducible.\\
In particular, if $\mathcal{I}(\negf)$ is the set of all irreducible GNS with Frobenius element $\negf$, from Theorem~\ref{irreducible} we have
$$\mathcal{I}(\negf)=V_{s} \ \mbox{and} \ \mathcal{A}(\negf)=\bigcup_{i=1}^{s}V_i, \ \mbox{where} \ s=\left\{\begin{array}{cccl}
\frac{|\negf+\mathbf{1}|_{\times}}{2} & , & \mbox{if} & \negf/2\notin \NN_0^d \\
\frac{|\negf+\mathbf{1}|_{\times}+1}{2} & , & \mbox{if} & \negf/2\in \NN_0^d
\end{array}\right..$$

\begin{example}
	Let $\negf=(2,1)$. If we consider $\prec_{lex}$ be the lexicographic order, we can produce the tree $\mathcal{G}_{\prec_{lex}}(\negf)$ as in Example~\ref{exaGraph} without considering the redundant computation $S_1^{(2)} \cup \{(2,0)\}=\NN_0^2\setminus \{(0,1),(1,0),(2,1)\}$.
	In fact $S_1^{(2)}=\NN_0^2\setminus \{(0,1),(1,0),(2,0),(2,1)\}$, in particular $\operatorname{N}(S_1^{(2)})=\{(0,0),(1,1)\}$ and $(1,1)\prec_{lex} (2,0)$.\\
	If we consider $\prec_{rlex}$ be the reverse lexicographic order, then the irredundant computation in Example~\ref{exaGraph} is $S_1^{(1)} \cup \{(1,1)\}=\NN_0^2\setminus \{(0,1),(1,0),(2,1)\}$. In fact $S_1^{(1)}=\NN_0^2\setminus \{(0,1),(1,0),(1,1),(2,1)\}$, in particular $\operatorname{N}(S_1^{(1)})=\{(0,0),(2,0)\}$ and $(2,0)\prec_{rlex} (1,1)$.\\
	In particular, different monomial orders can arrange the set $\mathcal{A}(\negf)$ in different trees $\mathcal{G}_{\prec}(\negf)$.
	\begin{figure}[h!]
		\subfloat[][$\mathcal{G}_{\prec}((2,1))$ with $\prec$ lex order]{\includegraphics[width=.45\textwidth]{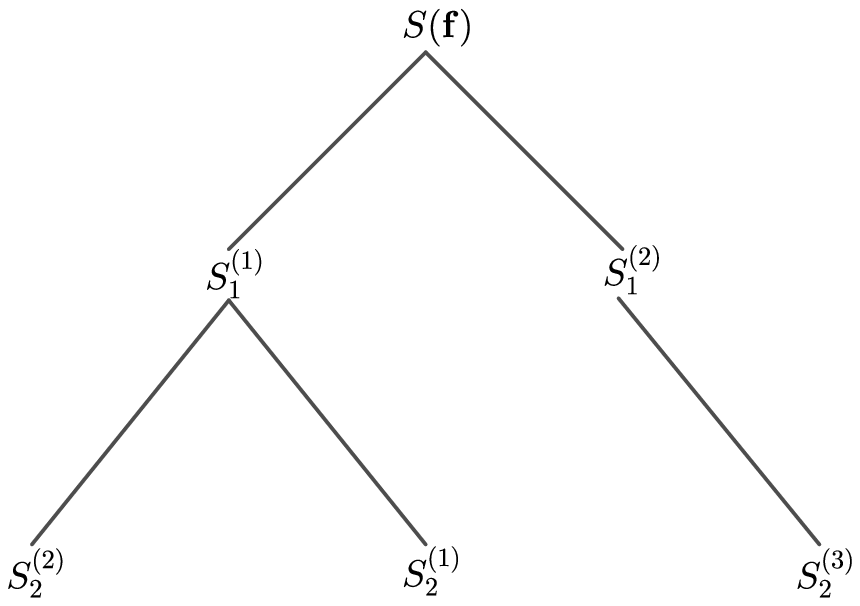}} \quad\subfloat[][$\mathcal{G}_{\prec}((2,1))$ with $\prec$ revlex order]{\includegraphics[width=.45\textwidth]{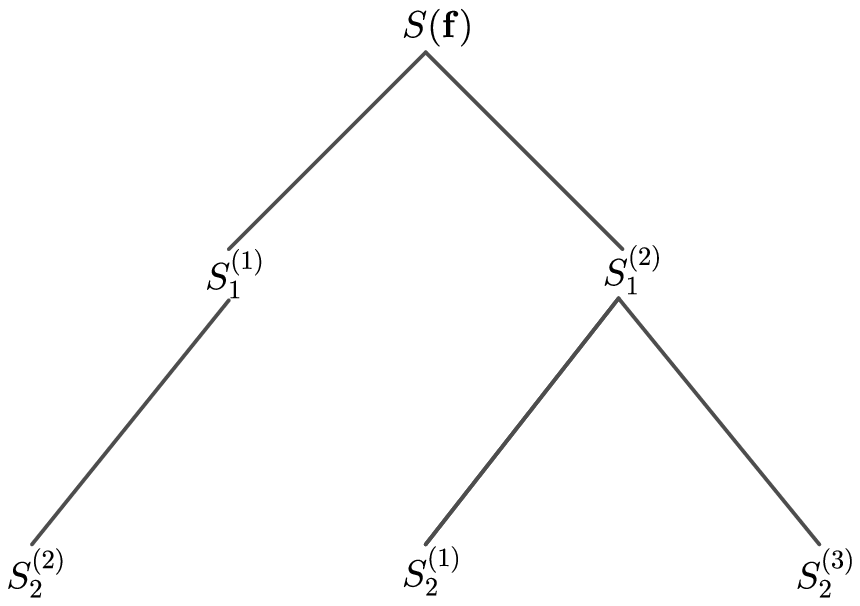}} 
		\caption{Example of $\mathcal{G}_{\prec}((2,1))$ with two different monomial orders}
	\end{figure}
\end{example}

\section{Concluding remarks}

We introduced an extension to the notion of almost-symmetry in the setting of GNSs and we deduced many results and properties for this new class of GNSs, generalizing the study made in numerical semigroups and also the approach presented in \cite{irreducible} for irreducible GNSs. In particular we provided a comprehension on how almost symmetric GNSs can be are organized. Many papers are devoted to study the role played by the almost symmetric numerical semigroups in the theory of numerical semigroups and this work intends to contribute so that the study of GNSs can also move in this direction. Among all, we were inspired by \cite{symmetries}, in which other interesting properties are contained, but we could also mention \cite{giventype}, \cite{higher} and \cite{construction}. We list below some other open questions concerning GNSs:
	\begin{itemize}
		\item Apart from the almost symmetric GNSs, are there other families of GNSs satifying the equality of sets $\mathrm{Maximals}_{\leq_{S}}\operatorname{Ap}(S,\negn)=\mathrm{Maximals}_{\leq_{S}}\operatorname{C}(S,\negn)$? 
		\item Generalizing a well-known conjecture for numerical semigroups (introduced in \cite{WilfNumerical}, namely Wilf's conjecture), it is proposed in \cite{wilf} the generalized Wilf's conjecture for GNSs that, for a Frobenius GNS $(S, \negf)$ in $\NN_0^d$, asks whether it holds that
		$$e(S)n(S)\geq d |\negf+\textbf{1}|_\times \, ,$$
		where $e(S)$ is the embedding dimension of $S$ and $$n(S)=|\{\negx\in S \mid \negx\leq \negh \ \mbox{for some } \negh\in \operatorname{H}(S)\}|.$$
		We mention that a generalization of Wilf's conjecture was introduced first in \cite{AnExtension} for a larger class of semigroups than GNSs. The conjecture introduced in \cite{wilf} is different and it is defined only for GNSs, but it is shown that, for a GNS, if it is true the conjecture in \cite{wilf} then it is true also the conjecture in \cite{AnExtension}.\\
		It is known that every almost symmetric numerical semigroup satisfies the conjecture, whose proof is due to M. La Valle and is contained in \cite{Barucci}. Moreover, it is shown in \cite{wilf} that the conjecture is verified for both classes of ordinary and irreducible GNSs. Is it true that all almost symmetric GNSs satisfy the generalized Wilf's conjecture? 
\end{itemize}

\textbf{Acknowledgements}. The authors wish to thank the referee for the valuable comments and suggestions that improved the previous version of this work.

\end{document}